\setlist{nolistsep}
\newtheorem{thm}{Theorem}[section]
\newtheorem{prop}[thm]{Proposition}
\newtheorem{cor}[thm]{Corollary}
\newtheorem{lem}[thm]{Lemma}
\newtheorem{conjecture}[thm]{Conjecture}
\newtheorem*{quest}{Question}
\newtheorem*{kc}{Kitaoka's Conjecture}
\theoremstyle{definition}
\newtheorem{remark}[thm]{Remark}
\numberwithin{equation}{section}
\newcommand{\B}{\mathcal{B}}
\newcommand{\ord}{\nu}
\newcommand{\N}{{\mathbb{Z}^+}}
\newcommand{\Z}{{\mathbb{Z}}}
\newcommand{\R}{{\mathbb{R}}}
\newcommand{\C}{{\mathbb{C}}}
\newcommand{\Q}{{\mathbb{Q}}}
\newcommand{\co}{{\mathcal{O}}}
\newcommand{\rank}{\operatorname{rank}}
\newcommand{\Ker}{\operatorname{Ker}}
\newcommand{\cupdot}{\mathbin{\mathaccent\cdot\cup}}
\newcommand{\Gra}[2]{G_{#1 \text{..} #2}}
\newcommand{\Gr}[1]{\Gra{1}{#1}}
\title[Kitaoka's Conjecture for quadratic fields]{Kitaoka's Conjecture for quadratic fields}
\author{V\' \i t\v ezslav Kala}
\address{V. Kala:\newline Charles University, Faculty of Mathematics and Physics, Department of Algebra, Sokolov\-sk\' a 83, 18600 Praha~8, Czech Republic}
\email{vitezslav.kala@matfyz.cuni.cz}
\author{Jakub Kr\'asensk\'y}
\address{J. Kr\'asensk\'y:\newline Czech Technical University in Prague, Faculty of Information Technology, Department of Applied Mathematics, Thákurova~9, 16000 Praha~6, Czech Republic}
\email{jakub.krasensky@fit.cvut.cz}
\author{Dayoon Park}
\address{D. Park:\newline Charles University, Faculty of Mathematics and Physics, Department of Algebra, Sokolov\-sk\' a 83, 18600 Praha~8, Czech Republic}
\email{pdy1016@snu.ac.kr}
\author{Pavlo Yatsyna}
\address{P. Yatsyna:\newline Charles University, Faculty of Mathematics and Physics, Department of Algebra, Sokolov\-sk\' a 83, 18600 Praha~8, Czech Republic}
\email{p.yatsyna@mff.cuni.cz}
\author{B\l{}a\.zej \.Zmija}
\address{B. \.Zmija:\newline Institute of Mathematics of the Polish Academy of Sciences, \'{S}niadeckich 8, 00-656 Warsaw, Poland,}
\address{\hspace{-1.1em}Charles University, Faculty of Mathematics and Physics, Department of Algebra, Sokolov\-sk\' a 83, 18600 Praha~8, Czech Republic}
\email{blazej.zmija@matfyz.cuni.cz}
\thanks{V.K., D.P., and B.\.Z. were supported by {Czech Science Foundation} grant 21-00420M. D.P. and P.Y. were supported by {Charles University} programme UNCE/24/SCI/022. P.Y. was supported by {Charles University} programme PRIMUS/24/SCI/010 and by the Academy of Finland (grant 351271, PI C. Hollanti).
	}
\keywords{universal quadratic form, quadratic lattice, real quadratic number field, Kitaoka's Conjecture}
\subjclass[2020]{11E12, 11E20, 11E25, 11R04, 11R11, 11R80}
\begin{document}

\begin{abstract} 
We prove that there are at most 13 real quadratic fields that admit a ternary universal quadratic lattice, thus establishing a strong version of Kitaoka's Conjecture for quadratic fields. More generally, we obtain explicit upper bounds on the discriminants of real quadratic fields with a quadratic lattice of rank at most 7 that represents all totally positive multiples of a fixed integer.
\end{abstract}

\maketitle

\tableofcontents

\section{Introduction}

The sets of positive integers that are represented by a positive definite integral quadratic form have long posed exciting research problems. 
The first key invariant is the number of variables of the form: 
A unary form clearly represents $c\sqrt X$ positive integers $<X$, but already the analogous question for binary forms is non-trivial, and it was only in 1912 that Bernays \cite{Be} showed the asymptotic $cX/\sqrt{\log X}$. While a ternary form can never represent all positive integers for local reasons, it follows from the 1990 theorem of Duke--Schulze-Pillot \cite{DSP} that it nevertheless always represents a positive proportion of integers.
Finally, forms in 4 and more variables can quite easily be universal (i.e., represent all positive integers), as illustrated already by the sum of 4 squares.
Building on works of Ramanujan, Dickson, and Willerding (among many others), all universal forms are now classified: First, in 1993 Conway--Schneeberger covered the classical case (i.e., all off-diagonal coefficients being even) as the main part of the proof of the 15-Theorem, see \cite{Bh}, and then
in 2011 Bhargava--Hanke \cite{BH} managed the difficult extension to all forms in the 290-Theorem.

The interest in integral quadratic forms over number fields goes back to the 1920s when Kirmse \cite{Kir} and G\" otzky \cite{Go} studied the sum of 4 squares over the rings of integers of real quadratic fields $\Q(\!\sqrt D)$ with small $D$, especially $D=5$.
In the 1940s, Maa{\ss} \cite{M} showed that actually the sum of 3 squares is universal over $\Q(\!\sqrt5)$, before Siegel \cite{Si} proved that $\Q$ and $\Q(\!\sqrt5)$ are the only totally real number fields with universal sum of any number of squares. Note that, given a totally real number field $F$ with ring of integers $\co_F$, we here call a quadratic form $Q$ with $\co_F$-coefficients universal over $F$ if it is totally positive definite (i.e., attains only totally positive values, except for $Q(0,\dots,0)=0$) and it represents over $\co_F$ all the totally positive elements of $\co_F$. Further, it is common to consider the slightly more general setting of quadratic lattices, as we will do in the rest of the article.

Since then, numerous researchers significantly advanced our understanding of universal lattices over number fields. In particular, we now know that they often require many variables, especially over real quadratic fields \cite{BK1, CKR, Ka1, KKP, KYZ}, but also in higher degrees \cite{Ka3, KT, KTZ, Man, Ya}. For further relevant results, see, e.g., \cite{Ea,CO,KL,KY1}, works on indefinite forms \cite{HHX,XZ}, or the surveys \cite{Ka2,Ki}.

Despite all this progress, it is curious that the most influential conjecture in the area remains open.

\begin{kc}[1990's]
    There are only finitely many totally real number fields that admit a ternary universal quadratic lattice.
\end{kc}

However, there has been some partial progress. Chan--Kim--Raghavan \cite{CKR} solved the case of classical forms over real quadratic fields $\Q(\!\sqrt D)$, where there is a ternary universal form exactly for $D=2,3,5$. In 2022, Kim--Kim--Park \cite{KKP} then obtained significant results for non-classical universal lattices, as they proved that there are only finitely many values of $D$ such that $\Q(\!\sqrt D)$ admits a universal lattice of rank at most 7. In particular, this established Kitaoka's Conjecture for real quadratic fields. However, while their methods give explicit upper bounds on $D$, they are probably too large to enable checking the remaining fields computationally in order to obtain a sharp result. 

In odd degrees $[F:\Q]$, Earnest--Khosravani \cite{EK} showed that there are no universal ternary lattices for local reasons.
Even degrees 4 and higher have not been fully resolved yet, although Kr\' asensk\' y--Tinkov\' a--Zemkov\' a \cite{KTZ} proved that there are no biquadratic fields with a classical universal ternary, and Man \cite{Man} showed that multiquadratic fields admitting a universal ternary have density zero among all multiquadratic fields of fixed degree.

Also, there was a proof of the \textit{weak Kitaoka's Conjecture} that asserts, in each fixed degree $[F:\Q]$, the finiteness of fields with a universal ternary: This was established first by Kim \cite{kim-unp} for classical lattices, and then by Kala--Yatsyna \cite{KY} in general.

\medskip

Our main result is the explicit resolution of Kitaoka's Conjecture for universal lattices over real quadratic fields (without the classical assumption).

\begin{thm}\label{ThmMain2}
If a real quadratic field $\Q(\!\sqrt{D})$ admits a universal ternary lattice, then 
\[
D=2,3,5,6,7,10,13,17,21,33,41,65,77.
\]
\end{thm}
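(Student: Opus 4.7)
My plan is to attack the theorem in three phases: first establish an explicit, genuinely sharp upper bound $D\le D_{0}$ for a real quadratic $F=\Q(\!\sqrt D)$ to admit a universal ternary lattice, using the indecomposables of $\co_F$; then for each candidate $D\le D_{0}$ not on the advertised list of 13 values, exhibit a concrete obstruction (local or arithmetic) ruling out universality; and finally, for each of the 13 listed $D$, exhibit a universal ternary lattice and verify it is universal.

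For the explicit bound, I would work with the indecomposable totally positive elements of $\co_F$, which in the real quadratic case are fully described by the periodic continued fraction expansion of $\sqrt D$ or $\frac{1+\sqrt D}{2}$: the indecomposables come in a sequence $\alpha_{i}$ with traces $\operatorname{Tr}(\alpha_{i})$ governed by the convergent denominators, and a universal ternary $L$ must represent every $\alpha_{i}$. Because $\rank L=3$, the Gram matrix of any three successive representations $L(x_{i})=\alpha_{i}$ is constrained; combined with totally-positive-definiteness and with the fact that products $\alpha_{i}\alpha_{j}$ arise as off-diagonal entries, this forces bounds tying the period length and partial quotients of the continued fraction to $D$. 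Refining the arguments of \cite{KKP, KY} in this concrete ternary setting, and in fact using the stronger general bound the abstract advertises (lattices of rank $\le 7$ representing all totally positive multiples of a fixed integer), should produce a $D_{0}$ of tractable size. Heuristically, the shortest indecomposables already force enough structure that once $D$ exceeds a few thousand, the Cauchy--Schwarz / Gram-determinant inequalities become inconsistent with a positive ternary form.

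The main obstacle, as I see it, is making these inequalities quantitatively tight. The earlier finiteness result of Kim--Kim--Park already gives a bound, but one too coarse to finish by computation; the crux of the paper must be to replace the general averaging over indecomposables by a more delicate analysis that accounts for the specific, and possibly atypical, behaviour of continued fractions of $\sqrt D$ (in particular, long periods with large partial quotients, which are the enemies of such arguments). I would expect the proof to split into cases by the structure of the period of the continued fraction: periods of length $1$ or $2$ give small or very symmetric families of indecomposables that can be handled directly, while longer periods provide an abundance of indecomposables that forces $L$ to have too large a determinant. The general rank-$7$, multiples-of-$m$ statement in the abstract is presumably what is needed to absorb ``bad'' sublattices that appear in the escalator process; specializing $m=1$ and rank $\le 3$ then yields the ternary bound.

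Once $D_{0}$ is established, the finishing phase is computational. For each $D\le D_{0}$ outside the list of 13, I would look for a short totally positive $\alpha\in\co_F$ and check, via the usual local conditions at the primes dividing $2\operatorname{disc}(F)\operatorname{N}(\alpha)$, that no totally positive definite ternary $\co_F$-lattice can represent $\alpha$; alternatively, one can run an escalator procedure starting from the representation of the smallest indecomposables and show that the escalation closes only with rank $\ge 4$. Conversely, for each $D\in\{2,3,5,6,7,10,13,17,21,33,41,65,77\}$ I would write down an explicit Gram matrix for a universal ternary (several of these, such as $x^{2}+y^{2}+z^{2}$ over $\Q(\!\sqrt 5)$ by Maa{\ss}, are already in the literature) and verify universality by representing a sufficient list of indecomposables and invoking a finiteness criterion analogous to the 15-Theorem/290-Theorem in the number field setting.
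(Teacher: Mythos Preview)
Your proposal has two substantive gaps.

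First, you misread the logical direction of the theorem. The statement is only the necessary condition: \emph{if} a universal ternary lattice exists, \emph{then} $D$ lies in the list. Your third phase---exhibiting and proving universality of a ternary lattice for each of the thirteen $D$---is not required, and in fact the paper leaves this open as a conjecture. Universality is established only for $D=2,3,5,13,17$, where class-number-one candidates happen to exist; for the remaining eight values the paper only produces candidates checked up to a large norm. So your plan would overshoot what is claimed and attempt something currently out of reach.

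Second, and more seriously, your strategy for the explicit bound $D_0$ is essentially the indecomposable/continued-fraction approach of \cite{KKP}, which the paper explicitly says yields bounds too large for a computational finish. You acknowledge this is ``the main obstacle'' but offer no new idea beyond ``refining'' those arguments. The paper's actual mechanism is entirely different: starting from representations of $m$ and $2m$ (here $m=2$ after doubling a possibly non-classical lattice), one chooses a third rational value $C_1 m$ by a \emph{local} argument---$C_1$ is bounded by the least quadratic non-residue $\gamma_p$ for a prime $p\le 2m^2$ dividing the determinant of the $2\times 2$ Gram matrix, so that $C_1 m$ is not represented by the binary $\Q$-space spanned by the first two vectors and hence forces linear independence. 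A fourth vector with irrational value $(\omega_D-\lfloor\omega_D'\rfloor)m$ is then automatically independent by an irrationality argument on the Gram determinant. This yields $\Delta_D<10000$ directly, with no continued-fraction casework. The remaining $D$ are eliminated computationally, but not via local obstructions or escalation: for each $D$ one exhibits a small finite set $S_D\subset\co_F^+$ and checks in Magma that no totally positive semidefinite $4\times 4$ matrix in $M_4(\tfrac12\co_F)$ of rank $\le 3$ has the elements of $S_D$ on its diagonal. Your proposal misses the non-residue idea that makes the bound tractable.
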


Further, in each of these $13$ cases, we found a candidate for the universal ternary lattice, and so we expect (Conjecture \ref{conj:list}) 
that Theorem \ref{ThmMain2} actually holds as an equivalence.
See Section \ref{sec:ternary} for more details and evidence, as well as for the proof 
of Theorem \ref{ThmMain2} as Theorem \ref{thm:ternary}.

The final part of the proof involves non-trivial computations performed in Magma \cite{magma}. To restrict ourselves to a finite list of fields to check we use our main Theorem \ref{MAINTHEOREM1} that will be discussed below. We also find all the conjectural universal ternaries for all $D > 5$ -- see Theorem \ref{thm:list} and the surrounding discussion. (It is possible to also cover the cases of $D=2,3,5$, but the resulting list of lattices would be quite long and would have questionable value.)

Notably, we also resolve several informal conjectures on separations between minimal ranks of universal forms with various restrictions (Remark \ref{rem1}). Namely, we show that over $\Q(\!\sqrt{13})$, the minimal rank of 
\begin{itemize}
    \item a (non-classical) universal form is 3, 
    \item a classical universal form is 4, 
    \item a diagonal universal form is at least 5. 
\end{itemize}
Further, conjecturally, over $\Q(\!\sqrt{10})$ and $\Q(\!\sqrt{65})$, there is a universal ternary lattice, but there is no universal ternary form, see Remark \ref{rem2}.

\medskip

As the theoretical basis for the computations proving Theorem \ref{ThmMain2}, we obtain more general lower bounds on ranks of lattices which are not necessarily universal, but which represent all totally positive multiples of a fixed rational integer $m$. Note that this covers the seemingly more general case of multiples of some $\alpha\in\co_F^+$, for $\alpha\co_F^+\supset \mathrm{N}_{F/\Q}(\alpha)\co_F^+$.

The consideration of lattices representing $m\co_F^+$ plays two roles. First of all, it is often more convenient to work with classical lattices, even when proving theorems that include the non-classical case. In order to do so, one can just replace a non-classical universal lattice $(L,Q)$ with $(L,2Q)$ -- that is now classical and represents all of $2\co_F^+$. Second, lattices representing $m\co_F^+$ serve as a very natural, non-trivial generalization of universal lattices that was already investigated, e.g., for quaternary lattices \cite{EK} and for sums of squares \cite{KY2,Ras,Chau}.
Such results can hopefully teach us insights which may be further applicable to more general considerations of lattices representing other positive density subsets of $\co_F^+$. Of particular note is the recent work of Bordignon--Cherubini \cite{BC} over $\Z$.

Our finiteness result builds on the works of B.~M.~Kim \cite{Ki1,Ki2} who established that 8 is the smallest rank for which there are infinitely many real quadratic fields admitting a diagonal universal form. As we already mentioned, Kim--Kim--Park \cite{KKP} recently extended these results to show that even when one considers universal quadratic lattices (not necessarily diagonal or classical), there are still only finitely many real quadratic fields with a universal lattice in at most 7 variables. As our other main result, we show the following generalization.

\begin{thm}\label{MAINTHEOREM1} 
Let $m, D$ be positive rational integers, $D \neq 1$ squarefree. Denote $F=\Q(\!\sqrt{D})$. Let $L$ be a classical totally positive definite quadratic $\co_F$-lattice representing every element of $m\co_F^{+}$. Then we have the following bounds on the discriminant of $F$.

\begin{center}
\begin{tabular}{ | c | l | l | }
\hline
  &\ \  for $m =1$ &\ \  for $m=2$ \\ 
\hline
 if $\rank L \leq 3$  &\ \  $\Delta_{D} < 625$ 
& \ \ $\Delta_{D} < 10{,}000$ \\ 
\hline 
 if $\rank L \leq 4$ &\ \  $\Delta_{D} < 2{,}025$ &\ \  $\Delta_{D} < 32{,}400$ \\
\hline
if $\rank L \leq 5$ &\ \  $\Delta_{D} < 50{,}625$ &\ \  $\Delta_{D} < 3{,}240{,}000$ \\
\hline
if $\rank L \leq 6$ &\ \  $\Delta_{D} < 275{,}625$ &\ \  $\Delta_{D} < 2{,}470{,}090{,}000$ \\
\hline
\ \ if $\rank L \leq 7$ \ \ & \ \ $\Delta_{D} < 441{,}000{,}000$ \ \ & \ \ $\Delta_{D} < 63{,}234{,}304{,}000{,}000$ \ \ \\
\hline
\end{tabular}
\end{center}
Further, for every $\varepsilon >0$,
\begin{center}
\begin{tabular}{ | l | l | l | l | }
\hline
  & unconditionally, for $m \geq 3$ & unconditionally & under GRH, for $m\geq 3$ \\ 
\hline
if $\rank L \leq 3$ & $\Delta_{D} < 5^4m^5(\log m)^{2}$  & $\Delta_{D}\ll_{\varepsilon} m^{4+\kappa + \varepsilon}$ & $\Delta_{D} < 2^2\cdot 3^4\cdot 5^2m^{4}(\log m)^{4}$ \\ 
\hline 
 if $\rank L \leq 4$ & $\Delta_{D} < 3^{4}\cdot 5^2 m^5(\log m)^{2}$ & $\Delta_{D}\ll_{\varepsilon} m^{4+\kappa + \varepsilon}$ & $\Delta_{D} < 2^2\cdot 3^8 m^{4}(\log m)^{4}$  \\
\hline 
if $\rank L \leq 5$ & $\Delta_{D} < 3^4\cdot 5^4 m^{7} (\log m)^{2}$ & $\Delta_{D}\ll_{\varepsilon} m^{6+\kappa + \varepsilon}$ & $\Delta_{D} < 2^2\cdot3^8\cdot5^2 m^{6}(\log m)^{4}$ \\
\hline
if $\rank L \leq 6$ & $\Delta_{D} < 2^{6}\cdot 5^{8} m^{55/4}(\log m)^{13/2}$ & $\Delta_{D}\ll_{\varepsilon} m^{11+\kappa+\lambda+\varepsilon}$ & $\Delta_{D}\ll m^{10} (\log m)^{12}$ \\
\hline
if $\rank L \leq 7$ & $\Delta_{D} < 2^{12}\cdot 5^{10} m^{43/2} (\log m)^{9}$ & $\Delta_{D}\ll_{\varepsilon} m^{18+2\lambda +\varepsilon} $ & $\Delta_{D}\ll m^{16}(\log m)^{16}$ \\
\hline
\end{tabular}
\end{center}
where $\kappa=\frac{1}{\sqrt{\mathrm{e}}}\approx 0.6065$ and $\lambda=\frac{\kappa^{2}}{4}+\frac{5\kappa}{2}-1\approx 0.6083$.

\end{thm}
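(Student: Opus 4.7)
The plan is to convert the hypothesis that the classical $\co_F$-lattice $L$ represents every element of $m\co_F^{+}$ into a system of explicit diophantine constraints on the Gram matrix of $L$, and then to pit these against quantitative information about the indecomposable totally positive elements of $\co_F$ coming from the continued fraction expansion of $\sqrt{D}$. This is in the spirit of the arguments of B.~M.~Kim and of Kim--Kim--Park recalled in the introduction, but we must now make every step fully effective, tracking the dependence on both $m$ and $\Delta_D$.

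\emph{Step 1 (normalisation).} After choosing a good basis of $L$, write down a Gram matrix $(q_{ij})\in\mathrm{Mat}_{n}(\co_{F})$ with totally positive diagonal entries $q_{ii}$ chosen minimal in their coordinate directions. The classicality assumption guarantees that every $q_{ij}$ lies in $\co_F$ rather than merely in $\tfrac{1}{2}\co_F$, which is critical for the counting below and is the reason one wants to pass to $2Q$ in the non-classical case.

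\emph{Step 2 (forcing small diagonal entries).} Using the (semi-)regular continued fraction of $\sqrt{D}$ or of $\tfrac{1+\sqrt{D}}{2}$, I would exhibit a long chain of indecomposable totally positive elements $\alpha_{1},\alpha_{2},\dots\in\co_F^{+}$, one conjugate tending to zero and the other growing with the partial quotients. Because each $m\alpha_{j}$ must be represented by $L$, an analysis of the identity $m\alpha_{j}=\sum q_{ij}x_{i}\bar{x}_{j}$ forces at least one diagonal entry $q_{ii}$ to be a totally positive element of small norm with a very specific conjugate. A careful escalator analysis, carried out separately for each rank $n\in\{3,\dots,7\}$, then bounds the number of admissible Gram matrices in terms of $n$ and $m$.

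\emph{Step 3 (locating a compatible small prime).} For $m\ge 3$ the requirement that some $q_{ii}$ represent $m\alpha_{j}$ becomes a congruence condition modulo a divisor of $m$: one needs a totally positive integer of prescribed small norm with prescribed splitting behaviour in an auxiliary (bi)quadratic extension of $F$. To locate the smallest such candidate I would invoke effective Chebotarev: Bach's GRH bound furnishes the polylogarithmic right-hand column, while the central unconditional column uses Burgess's bound for the least quadratic non-residue, whence the exponent $\kappa=1/\sqrt{\mathrm{e}}$. The refinement $\lambda=\kappa^{2}/4+5\kappa/2-1$ appearing in ranks $6$ and $7$ arises from iterating this step on a pair of auxiliary characters, which is needed because a single character no longer suffices once the Gram matrix has two forced small entries. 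Finally, combining Steps~2 and~3 gives, in each rank, an inequality of the shape $\sqrt{\Delta_{D}}\ll f(m)$; squaring, and for $m\in\{1,2\}$ numerically optimising the implicit constants, yields the two tables.

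The main obstacle I foresee is Step~2 at ranks $6$ and $7$: the combinatorial classification of possible Gram matrices grows rapidly, and each configuration must then be cleared through the analytic input of Step~3, which is exactly why both the power of $m$ and the additional constant $\lambda$ appear at these ranks. Handling them uniformly, while keeping the implicit constants small enough that the resulting bounds for $m=1,2$ are numerically usable for the computational proof of Theorem~\ref{ThmMain2}, is the real crux of the argument.
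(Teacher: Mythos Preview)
Your outline lands on a few correct landmarks (Burgess as the source of $\kappa$; an iterated application of non-residue bounds as the source of $\lambda$), but the mechanism you sketch is not the one that proves the theorem, and the steps you describe would not produce the stated constants.

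The paper does \emph{not} escalate or classify Gram matrices of $L$, and it does not use continued-fraction indecomposables. Instead it fixes an explicit list of elements of $m\co_F^+$ and shows that any vectors of $L$ representing them are linearly independent once $\Delta_D$ is large. For rank $\leq 3+k$ the list has the shape
\[
m,\ 2m,\ C_1m,\ D_1m;\qquad m\bigl(\omega_D-\lfloor\omega_D'\rfloor\bigr),\ m\bigl(2\omega_D-\lfloor 2\omega_D'\rfloor\bigr),\ m\bigl(C_2\omega_D-\lfloor C_2\omega_D'\rfloor\bigr),\ m\bigl(D_2\omega_D-\lfloor D_2\omega_D'\rfloor\bigr),
\]
using $k$ entries from each half. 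The integers $C_i,D_i$ are produced by a purely local lemma: a positive definite rational binary (resp.\ ternary) space fails to represent some $C\leq\gamma_p$ (resp.\ some $D\leq q\gamma_q$) times any fixed represented value, where $p\mid 2\det$ (resp.\ $q\mid 2\det$); here $p\leq 2m^2$ and $q\leq 2C_1m^3$ come from Hadamard's inequality on the little Gram matrices, not from Chebotarev or any auxiliary extension. Linear independence is then read off by writing the big Gram matrix as $A+B\sqrt{\Delta_D}$ with $A=\Gr{}'$ positive semidefinite and showing that $B$ has a positive definite lower-right block; a block sum-of-PSD argument finishes.

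The numeric tables and the asymptotic columns are obtained afterwards by bounding $\gamma_p$ via Trevi\~no (explicit), Burgess ($\kappa$), or Lamzouri--Li--Soundararajan (GRH). The constant $\lambda$ is exactly the exponent one gets by feeding $q_i\leq 2C_im^3\ll m^{3+\kappa/2}$ back into Burgess to bound $D_i=q_i\gamma_{q_i}$; it is an arithmetic bookkeeping constant, not the output of a two-character argument. Your ``effective Chebotarev / biquadratic splitting'' step is the main wrong turn: the required analytic input is only the least non-residue modulo a single rational prime of size $\leq 2m^2$ or $\leq 2C_im^3$.
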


Here and in the following, we use the common analytic number theory notation that $f\ll g$ if there is a constant $c>0$ such that $f(x)<cg(x)$ for all sufficiently large values $x$. The subscript $\ll_{\varepsilon}$ is used to stress that the constant $c$ may depend on the parameter ${\varepsilon}$. In Section \ref{sec:m-uni}, we prove Theorem \ref{MAINTHEOREM1} as a consequence of the more precise statement of Theorem \ref{ThmMain1} that is, in turn, proved in Sections \ref{sec:45} and \ref{sec:678}.

The main idea of the proof is not surprising, as we consider the representability of certain elements. We start by representing $m$ and $2m$, but then we consider a third multiple $C_1m$ where we show that $C_1$ can be chosen in terms of the least quadratic non-residue $\gamma_p$ modulo a suitable prime $p$ (Corollary \ref{ind cor}). Establishing this requires careful local considerations in Section \ref{sec:tech}.

The explicit unconditional bound of Theorem \ref{MAINTHEOREM1} is obtained by estimating $\gamma_p$ using the result by Trevi\~{n}o \cite{Tre}, whereas the two other estimates follow by using Burgess' bound \cite{Bu} or the conditional bounds of Ankeny and Lamzouri--Li--Soundararajan \cite{An,LLS}, which assume the Generalized Riemann Hypothesis (GRH). As one can expect, the latter bounds give better asymptotics in terms of the dependence on $m$, at the cost of having inexplicit constants. In particular, for Theorem \ref{ThmMain2} we need the explicit unconditional bound.

Note that some of the bounds given in Theorem \ref{MAINTHEOREM1} have been improved computationally: Chan--Kim--Raghavan \cite{CKR} (together with our Theorem \ref{thm:list} to cover also non-free lattices) showed that if $\rank L \leq 3$ and $m=1$, then actually $\Delta_D\leq 12$. The bounds for $\Delta_D$ given in Theorem \ref{MAINTHEOREM1} are quite computationally tractable also for other small values of $m$ and bounds on the rank -- this is an interesting goal for a further research project (of highly computational nature).

\medskip

To conclude, we note that no classical universal ternary lattices are known in degrees $>2$. 
Together with the partial evidence given by the weak Kitaoka theorem \cite{KY}, 
this might tempt one to formulate a \textit{strong Kitaoka's Conjecture} that $\Q(\!\sqrt2)$, $\Q(\!\sqrt3)$ and $\Q(\!\sqrt5)$, found in \cite{CKR}, are the only number fields with a classical universal ternary lattice, regardless of the degree. On the other hand, it is somewhat surprising that there are quite a lot of real quadratic fields with a non-classical universal ternary (10 on top of the classical 3 found by \cite{CKR}), and  Krásenský--Scharlau [in preparation]  constructed non-classical universal ternary forms over the quartic fields $\Q(\!\sqrt2,\sqrt5)$ and $\Q(\zeta_{20})^+=\Q\Bigl(\!\!\sqrt{\frac{5+\sqrt5}{2}}\Bigr)$.

\section{Preliminaries}\label{sec:prel}
	
Let us start with general notation. We write $\lfloor \,\cdot\, \rfloor$ and $\lceil \,\cdot\, \rceil$ for the floor and ceiling function, respectively, and $\log$ for the natural logarithm. Throughout the paper, if $p$ is an odd prime, we write $\gamma_p$ for the least quadratic non-residue modulo $p$, and we put $\gamma_2 = 7$. If $p$ is a prime, we use $\ord_p$ for the additive $p$-valuation, and $\Q_p$ and $\Z_p$ for the $p$-adic field and $p$-adic integers. For any ring $R$, the symbol $R^{\times}$ denotes its group of invertible elements. By a notation such as $(\Z_p^{\times})^2$, we mean the set of squares of $p$-adic units; this collides with a notation such as $\R^n$ for the $n$-th Cartesian power, but the meaning will always be clear from the context.

The symbol $M_n(R)$ stands for the set of $n \times n$ matrices with entries from $R$. The matrix can be given directly by its entries in the form $M = (m_{ij})_{1 \leq i,j \leq n}$. We say that $M \in M_n(\R)$ is \emph{positive semidefinite} if $M=M^{\mathrm{T}}$ and $x^{\mathrm{T}}Mx \geq 0$ for every $x \in \R^n$. It is \emph{positive definite} if equality only occurs for the zero vector $x=0$. We will need the following bound for determinants of positive semidefinite matrices.

\begin{lem}[Hadamard's inequality] \label{Hadamard}
Let $A = (a_{ij})_{1\leq i,j \leq n} \in M_n(\R)$ be a positive semidefinite matrix. Then $\det A \leq a_{11}a_{22} \cdots a_{nn}$.
\end{lem}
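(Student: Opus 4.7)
The plan is to reduce to the case of a positive semidefinite matrix with all ones on the diagonal, and then bound the determinant by the trace via the AM--GM inequality on eigenvalues.

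First I would dispose of the degenerate cases. If some diagonal entry $a_{ii}$ is zero, then for each $j$ the principal $2\times 2$ submatrix indexed by $\{i,j\}$ is positive semidefinite; its determinant $-a_{ij}^2\geq 0$ forces $a_{ij}=0$, so the $i$-th row (and column) of $A$ vanish, $\det A=0$, and the inequality is trivial (since the right-hand side is $\geq 0$ by positive semidefiniteness of the diagonal entries). Hence I may assume $a_{ii}>0$ for every $i$.

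Now set $D=\operatorname{diag}\bigl(\sqrt{a_{11}},\dots,\sqrt{a_{nn}}\bigr)$ and let $B=D^{-1}AD^{-1}$. Then $B$ is again positive semidefinite (since $x^{\mathrm{T}}Bx=(D^{-1}x)^{\mathrm{T}}A(D^{-1}x)\geq 0$), and by construction $b_{ii}=1$ for every $i$, so $\operatorname{tr} B = n$. Writing $\lambda_1,\dots,\lambda_n\geq 0$ for the eigenvalues of $B$, the AM--GM inequality yields
\[
\det B \;=\; \prod_{i=1}^{n}\lambda_i \;\leq\; \Bigl(\frac{1}{n}\sum_{i=1}^{n}\lambda_i\Bigr)^{\!n} \;=\; \Bigl(\frac{\operatorname{tr} B}{n}\Bigr)^{\!n}=1.
\]
Since $\det A = (\det D)^2 \det B = a_{11}\cdots a_{nn}\cdot\det B$, the claim follows.

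There is no real obstacle here; the only point that requires a moment's care is the handling of vanishing diagonal entries, which could not be rescaled directly. An alternative proof via Cholesky (writing $A=LL^{\mathrm{T}}$ and noting $a_{ii}=\sum_{j\leq i}l_{ij}^{2}\geq l_{ii}^{2}$, whence $\det A=\prod l_{ii}^{2}\leq\prod a_{ii}$) would work equally well, but the eigenvalue argument is shorter and self-contained.
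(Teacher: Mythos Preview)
Your proof is correct and complete: the reduction to unit diagonal via $B=D^{-1}AD^{-1}$ followed by AM--GM on the eigenvalues is a standard and clean argument, and you handle the degenerate case $a_{ii}=0$ properly. The paper itself does not give a proof at all; it simply cites \cite[Theorem 7.8.1]{HJ}. So your write-up actually supplies more than the paper does, and there is nothing further to compare.
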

\begin{proof}
See, for example, \cite[Theorem 7.8.1]{HJ}.
\end{proof}

\subsection{Number fields}

Let $F$ be a number field with ring of integers $\co_F$ and degree $d=[F:\Q]$. Let $\sigma_1 = \mathrm{id}, \sigma_2, \ldots, \sigma_d$ be the embeddings $F \to \C$. We say that $F$ is totally real if $\sigma_i(F) \subset \R$ for every $i$. In such a case, we call $\alpha \in F$ \emph{totally positive} if $\sigma_i(\alpha) > 0$ for every $i$; this is denoted $\alpha \succ 0$. If $\alpha - \beta \succ 0$, we write $\alpha \succ \beta$. Analogously, we write $\alpha \succeq 0$ if $\sigma_i(\alpha) \geq 0$ for every $i$, and $\alpha \succeq \beta$ if $\alpha - \beta \succeq 0$. The set of all totally positive elements of $\co_F$ is denoted $\co_F^+$. In particular, $\Z^+$ is the set of all positive rational integers. A symmetric matrix $M \in M_n(F)$ is \emph{totally positive (semi)definite} if $\sigma_i(M)$ is positive (semi)definite in every embedding $\sigma_i$; this can also be defined by requiring $x^{\mathrm{T}}Mx \succ 0$ (or $x^{\mathrm{T}}Mx \succeq 0$) for every nonzero $x \in F^n$.

For the vast majority of the paper, we shall work with real quadratic fields. If we write $F = \Q(\!\sqrt{D})$, we automatically expect that $D>1$ is squarefree, even if we do not explicitly say so. For $\alpha \in F$, its conjugate is denoted by $\alpha'$; thus, $\alpha \succ 0$ if and only if $\alpha, \alpha' > 0$. We shall fix the following notation:
\[
\Delta_D=\begin{cases}
D & \text{if }D \equiv 1 \pmod 4,\\
4D & \text{if }D \equiv 2,3 \pmod 4
\end{cases}
\]
for the discriminant of $F$, and
\[
\omega_D=\begin{cases}
\frac{1+\sqrt{D}}{2} & \text{if }D \equiv 1 \pmod 4,\\
\sqrt{D} & \text{if }D \equiv 2,3 \pmod 4,
\end{cases}
\]
so that $\co_F = \Z[\omega_D]$. The identity $\omega_D - \omega_D' = \sqrt{\Delta_D}$ will be often useful.

The \emph{norm} of $\alpha \in F$ is $\mathrm{N}_{F/\Q}(\alpha) = \alpha\cdot\alpha'$; an important fact is that up to multiplication by $(\co_F^{\times})^2$, there are only finitely many elements of $\co_F^+$ with norm under a given bound.

\subsection{Quadratic spaces}

Rather than viewing quadratic forms as homogeneous polynomials of degree two, we use the language of quadratic spaces and lattices over number fields, for which \cite{OM} is the standard reference.

Let $F$ be a field, $\mathrm{char}(F) \neq 2$. A \emph{quadratic space} over $F$ is a pair $(V,Q)$, where $V$ is a finite-dimensional vector space over $F$ and $Q : V \to F$ is a \emph{quadratic map}. This means that $Q$ satisfies $Q(\alpha v) = \alpha^2 Q(v)$ for every $\alpha \in F$, $v \in V$, and that the induced map $\B : V \times V \to F$, $\B(v,w) := \frac12 \bigl( Q(v+w) - Q(v) - Q(w) \bigr)$ is bilinear.

The quadratic space $(V,Q)$ \emph{represents} $\alpha \in F$ if there is $v \in V$ such that $Q(v) = \alpha$. The space is \emph{non-degenerate} if there is no $v \in V$ such that $\B(v,w) = 0$ for all $w \in V$. In the following, we assume all quadratic spaces to be non-degenerate. Such a space is \emph{isotropic} if $Q(v) = 0$ for a nonzero $v \in V$, and \emph{anisotropic} otherwise.

Vectors $v_1, \ldots, v_k$ are \emph{pairwise orthogonal} if $\B(v_i,v_j)=0$ for every $i \neq j$. If, moreover, none of them is zero, we can write $F v_1 \perp F v_2 \perp \cdots \perp F v_k$ instead of $Fv_1 + Fv_2 + \cdots + F v_k$ to stress that the subspace is generated by orthogonal vectors.

For vectors $v_1, \ldots, v_k$, their \emph{Gram matrix} is the symmetric $k \times k$ matrix $\bigl(\B(v_i,v_j)\bigr)_{1\leq i,j \leq k}$. An important fact is that the Gram matrices of two different bases of $V$ differ just by multiplication by an element of $(F^{\times})^2$. To increase readability, we introduce the following notation: Whenever vectors $v_1, \ldots, v_k$ are given, we write $\Gr{k}$ for their Gram matrix. More generally, $\Gra{\ell}{k} = \bigl(\B(v_i,v_j)\bigr)_{\ell \leq i,j \leq k}$.

If $F$ is a totally real number field, then we say that $(V,Q)$ is \emph{totally positive (semi)definite} if $Q(x) \succ 0$ (or $Q(x) \succeq 0$) for every nonzero $x \in V$. In a positive semidefinite quadratic space, the Gram matrix of given vectors is always totally positive semidefinite. In particular, the Cauchy--Schwarz inequality $Q(v)Q(w) \succeq \B(v,w)^2$ holds for every $v,w$.

\subsection{Quadratic lattices}

Let $(V,Q)$ be a quadratic space over a number field $F$ and $L$ a finitely generated $\co_F$-submodule of $V$ such that $Q(L) \subset \co_F$. By slight abuse of notation, write $Q: L \to \co_F$ also for the restriction of the quadratic map to $L$. Then we call $(L,Q)$ a \emph{quadratic $\co_F$-lattice}; we often omit the adjective \enquote{quadratic}. Note that our definition corresponds to what many authors call an integral lattice.

We can also directly define a quadratic lattice without using the quadratic space: $L$ is a finitely generated torsion-free $\co_F$-module and $Q: L \to \co_F$ is a quadratic map, just as above. The quadratic space can be recovered by taking the tensor product $L \otimes F$. The rank of a lattice $(L,Q)$, denoted $\rank L $, is the dimension of the space $L \otimes F$. Specifically, quadratic lattices of rank $3$ are called \emph{ternary}.

We often just write $L$ instead of $(L,Q)$ for the quadratic lattice; throughout the paper, we automatically assume that the quadratic map on $L$ is denoted $Q$ (unless otherwise stated), and that $\B$ is the corresponding bilinear map. A quadratic lattice is \emph{classical} if $\B(L,L) \subset \co_F$; for a general non-classical lattice we only have $\B: L \times L \to \frac12 \co_F$. Clearly, a Gram matrix of any $k$ vectors from a classical $\co_F$-lattice lies in $M_k(\co_F)$.

A quadratic $\co_F$-lattice is \emph{totally positive (semi)definite} if the corresponding quadratic space is. Again, $\alpha \in \co_F$ is \emph{represented} by $L$ if $Q(v) = \alpha$ for some $v \in L$. Finally, a quadratic lattice is \emph{universal} if $Q\bigl(L \setminus \{0\}\bigr) = \co_F^+$; i.e., it is totally positive definite and represents all elements of $\co_F^+$.

A quadratic lattice $L$ is \emph{free} if the underlying $\co_F$-module is. In this case, $L = \co_F b_1 + \cdots + \co_F b_n$ for some linearly independent vectors $b_1, \ldots, b_n$, where $n= \rank L$. The vectors $b_i$ form a \emph{basis} of $L$.

Free quadratic lattices correspond to integral quadratic forms: If $\varphi \in \co_F[X_1, \ldots, X_n]$ is a \emph{quadratic form}, i.e., a homogeneous polynomial of degree $2$, then we can use it to define a quadratic lattice $(L,Q)$ as follows: We let $L = \co_F^n$ and let $Q\bigl((\alpha_1,\ldots, \alpha_n)\bigr) = \varphi(\alpha_1,\ldots,\alpha_n)$. On the other hand, given any free quadratic lattice $(L,Q)$ and its basis $b_1, \ldots, b_n$, we can define a quadratic form $\varphi$ by putting
\[
\varphi(X_1, \ldots, X_n) = Q(X_1b_1 + \cdots + X_nb_n).
\]

For a field $F$ with class number $h(F)=1$, every $\co_F$-lattice is free. Generally, by the structure theorem for finitely generated modules over Dedekind domains, every $\co_F$-lattice $L$ can be written as
\[
L = \co_F b_1 + \cdots + \co_F b_{n-1} + \mathfrak{a}^{-1} b_n
\]
for some linearly independent $b_1, \ldots, b_n \in L$ and an ideal $\mathfrak{a}$ in $\co_F$; again, $n = \rank L$. Such a lattice corresponds to a quadratic form where we can plug in $X_1, \ldots, X_{n-1} \in \co_F$, but $X_n$ can take values from the fractional ideal $\mathfrak{a}^{-1}$. We shall meet two forms corresponding to non-free lattices, $\varphi^{(10)}$ and $\varphi^{(65)}$, in Section \ref{subsec:candidates}.

\subsection{Lemmas about \texorpdfstring{$p$}{p}-adic numbers}

We shall need the following well-known results about squares in $\Z_p$. Of course, unless $p=2$, we can make the replacement $4p\Z_p=p\Z_p$.

\begin{lem} \label{local value lem}
Let $p$ be any prime.
\begin{itemize}
\item[(1)] $(1+p\Z_p)^2=1+4p\Z_p$.
\item[(2)] $(u_p+p\Z_p)^2=u_p^2+4p\Z_p$ for $u_p \in \Z_p^{\times}$.
\end{itemize}
\end{lem}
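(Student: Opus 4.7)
The plan is to prove part (1) using a direct expansion for the $\subseteq$ direction and a Hensel's lemma argument for the $\supseteq$ direction; part (2) will then follow from (1) by a scaling argument. The main subtlety is the $p=2$ case: squaring loses more ultrametric slack when $2$ is not a unit, and the factor $4p$ on the right-hand side (rather than just $p$) is precisely tuned so that both directions go through cleanly for $p=2$.

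For the $\subseteq$ direction of (1), expand $(1+pa)^2 = 1 + p(2a+pa^2)$. For odd $p$, we have $4p\Z_p = p\Z_p$, so the result lies trivially in $1+4p\Z_p$. For $p=2$, use the sharper rewriting $(1+2a)^2 = 1 + 4a(1+a)$ together with the observation that one of $a$, $a+1$ is $2$-adically even, giving $4a(a+1) \in 8\Z_2 = 4p\Z_2$.

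For the $\supseteq$ direction of (1), given $c \in 1+4p\Z_p$, apply Hensel's lemma to $f(X) = X^2 - c$ at $X_0 = 1$. Then $\ord_p(f(1)) = \ord_p(1-c) \geq \ord_p(4p)$ while $\ord_p(f'(1)) = \ord_p(2)$, so the Hensel condition $\ord_p(f(1)) > 2\,\ord_p(f'(1))$ becomes $1 > 0$ for odd $p$ and $3 > 2$ for $p=2$, both of which hold. Hensel's lemma then produces a root $x \in \Z_p$ with $\ord_p(x-1) \geq \ord_p(f(1)) - \ord_p(f'(1))$; this gives $x-1 \in p\Z_p$ for odd $p$ and $x-1 \in 4\Z_2 \subset 2\Z_2$ for $p=2$, so $x \in 1+p\Z_p$ in either case, proving $c \in (1+p\Z_p)^2$.

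Finally, part (2) follows from (1) by noting that $u_p + p\Z_p = u_p(1+p\Z_p)$ since $u_p \in \Z_p^{\times}$; squaring and applying (1) yields
\[
(u_p + p\Z_p)^2 = u_p^2(1+p\Z_p)^2 = u_p^2(1+4p\Z_p) = u_p^2 + 4p\Z_p,
\]
where in the last equality we use that $u_p^2$ is a unit and hence $u_p^2 \cdot 4p\Z_p = 4p\Z_p$.
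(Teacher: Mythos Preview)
Your proof is correct. The paper itself does not give a proof of this lemma; it simply cites O'Meara \cite[63:1]{OM}. Your argument is essentially the standard one underlying that reference: the forward inclusion by direct expansion (with the parity trick $a(a{+}1)\in 2\Z_2$ handling $p=2$), the reverse inclusion via the strong form of Hensel's lemma applied to $X^2-c$, and the reduction of (2) to (1) by the unit scaling $u_p+p\Z_p=u_p(1+p\Z_p)$. So you have supplied a self-contained proof where the paper only points to the literature; the content matches what one finds in O'Meara, and nothing is missing.
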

\begin{proof}
See \cite[63:1]{OM}.
\end{proof}

\begin{cor} \label{local value}
Let $p$ denote any odd prime and $\gamma_p$ any quadratic non-residue modulo $p$.
\begin{itemize}
\item[(1)] $\Z_2^{\times}=(\Z_2^{\times})^2 \cupdot 3(\Z_2^{\times})^2 \cupdot 5(\Z_2^{\times})^2 \cupdot 7(\Z_2^{\times})^2$,
\item[(2)] $\Z_p^{\times}=(\Z_p^{\times})^2 \cupdot \gamma_p (\Z_p^{\times})^2$,
\item[(3)] $\Q_2^{\times}=(\Q_2^{\times})^2 \cupdot 3(\Q_2^{\times})^2 \cupdot 5(\Q_2^{\times})^2 \cupdot 7(\Q_2^{\times})^2 \cupdot 2(\Q_2^{\times})^2 \cupdot 2\cdot3(\Q_2^{\times})^2 \cupdot 2\cdot5(\Q_2^{\times})^2 \cupdot 2\cdot7(\Q_2^{\times})^2$,
\item[(4)] $\Q_p^{\times}= (\Q_p^{\times})^2 \cupdot \gamma_p(\Q_p^{\times})^2 \cupdot p(\Q_p^{\times})^2 \cupdot p\gamma_p(\Q_p^{\times})^2$.
\end{itemize}
Here $\cupdot$ denotes a union of disjoint sets.
\end{cor}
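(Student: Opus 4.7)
The plan is to reduce everything to understanding $(\Z_p^{\times})^2$ and then to extend to $\Q_p^{\times}$ by exploiting the valuation. First I would dispatch part (2). For odd $p$, Lemma \ref{local value lem}(2) gives $(u_p + p\Z_p)^2 = u_p^2 + 4p\Z_p = u_p^2 + p\Z_p$ (since $4$ is a unit modulo $p$). Thus whether an element of $\Z_p^{\times}$ is a square depends only on its residue modulo $p$, so $(\Z_p^{\times})^2$ is the preimage under the reduction $\Z_p^{\times}\twoheadrightarrow(\Z/p\Z)^{\times}$ of the index-$2$ subgroup of quadratic residues. Since $\gamma_p$ reduces to a non-residue by definition, the cosets $(\Z_p^{\times})^2$ and $\gamma_p(\Z_p^{\times})^2$ are disjoint and together exhaust $\Z_p^{\times}$.

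For part (1), the same lemma with $p=2$ yields $(u_2+2\Z_2)^2 = u_2^2 + 8\Z_2$, and since every $u_2 \in \Z_2^{\times}$ is odd so that $u_2^2\equiv 1\pmod 8$, this forces $(\Z_2^{\times})^2 = 1+8\Z_2$. Hence $\Z_2^{\times}/(\Z_2^{\times})^2 \cong (\Z/8\Z)^{\times}$ has order $4$ and $\{1,3,5,7\}$ is a complete system of representatives, giving exactly the disjoint union claimed.

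For parts (3) and (4) I would exploit the unique factorisation $x = p^{\ord_p(x)}u$ with $u \in \Z_p^{\times}$, which identifies $\Q_p^{\times}$ with $\Z\times\Z_p^{\times}$ as abelian groups. Under this identification $(\Q_p^{\times})^2$ corresponds to $2\Z\times(\Z_p^{\times})^2$, so
\[
[\Q_p^{\times}:(\Q_p^{\times})^2] \;=\; 2\cdot[\Z_p^{\times}:(\Z_p^{\times})^2],
\]
which equals $8$ for $p=2$ and $4$ for $p$ odd. Combining the unit representatives from (1) and (2) with the two choices $\{1,p\}$ for the valuation part then yields precisely the asserted lists. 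The only subtlety worth recording is that a non-square unit such as $\gamma_p$ (resp.\ $3,5,7$) actually stays non-square in $\Q_p^{\times}$: any square in $\Q_p^{\times}$ has even valuation and its unit part must already be a square, so the embedding $\Z_p^{\times}/(\Z_p^{\times})^2\hookrightarrow\Q_p^{\times}/(\Q_p^{\times})^2$ is injective.

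I do not anticipate a genuine obstacle in this proof, which is essentially bookkeeping driven by Lemma \ref{local value lem}; the mildest care is needed at $p=2$, where the extra factor of $4$ in $(1+p\Z_p)^2 = 1+4p\Z_p$ doubles the number of unit square classes compared with odd primes.
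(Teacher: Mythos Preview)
Your argument is correct and follows the same path as the paper, which simply records that the statement ``follows from \cite[63:1]{OM}''; you have merely unpacked that citation via Lemma~\ref{local value lem}, making explicit the reduction-mod-$p$ (resp.\ mod $8$) description of $(\Z_p^{\times})^2$ and the valuation splitting $\Q_p^{\times}\cong\Z\times\Z_p^{\times}$.
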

\begin{proof}
Follows from \cite[63:1]{OM}.
\end{proof}

For a $\Z$-lattice $L$ and a prime $p$, $L \otimes \Q_p$ is a well-defined quadratic space over $\Q_p$. We need a result about ternary quadratic spaces over $\Q_p$.

\begin{lem}\label{aniso ternary}
Let $L$ be a positive definite ternary $\Z$-lattice. Then the quadratic space $L \otimes \Q_p$ is isotropic if and only if $L\otimes \Q_p$ represents all of $\Q_p$.
\end{lem}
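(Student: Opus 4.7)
The plan is to handle the two implications separately.

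For the forward direction, set $V := L \otimes \Q_p$ and note that $L$ being positive definite over $\Z$ ensures its Gram determinant is a positive integer, so $V$ is non-degenerate. If $V$ is isotropic, the standard splitting theorem (see \cite[\S42E]{OM}) gives $V \cong \mathbb{H} \perp \langle c \rangle$ with $c \in \Q_p^\times$, where $\mathbb{H}$ is the hyperbolic plane; its form is equivalent to $2xy$, which is surjective onto $\Q_p$, so $V$ represents every element of $\Q_p$.

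For the reverse direction I would argue the contrapositive. Suppose $V$ is anisotropic; choose an orthogonal basis so $V \cong \langle a, b, c \rangle$ with $a, b, c \in \Q_p^\times$. I claim $V$ does not represent $-abc$. Indeed, if it did, then the quaternary form $V \perp \langle abc \rangle \cong \langle a, b, c, abc \rangle$ would be isotropic. But this quaternary has discriminant $a^2 b^2 c^2$, a square, and the classification of forms over $\Q_p$ (cf.\ \cite[\S63]{OM}) says that quaternary forms with square discriminant over $\Q_p$ fall into exactly two isometry classes: the totally hyperbolic $\mathbb{H} \perp \mathbb{H}$, and the reduced-norm form $N_D$ of the unique quaternion division algebra over $\Q_p$, which is anisotropic. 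In $\mathbb{H} \perp \mathbb{H}$, any $3$-dimensional subspace meets some $2$-dimensional maximal isotropic subspace in at least dimension $3 + 2 - 4 = 1$, hence contains a nonzero isotropic vector. So $\langle a, b, c, abc \rangle$ cannot be $\mathbb{H} \perp \mathbb{H}$ (its $3$-dim subspace $V$ is anisotropic), and it must equal $N_D$, which is anisotropic, yielding a contradiction.

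The main obstacle is invoking the classification of quaternary forms of square discriminant over $\Q_p$; everything else is a short dimension count together with the usual reformulation of representing $\alpha$ in terms of isotropy of the orthogonal sum with $\langle -\alpha\rangle$. Once the structural fact is in hand, the rest fits into a single paragraph, and no case analysis on the parity of $p$ is needed.
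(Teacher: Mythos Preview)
Your argument is correct. The forward direction is standard (any isotropic non-degenerate space splits off a hyperbolic plane, which is already universal), and your contrapositive for the reverse direction is clean: the quaternary $\langle a,b,c,abc\rangle$ has square discriminant, so over $\Q_p$ it is either $\mathbb{H}\perp\mathbb{H}$ or the unique anisotropic quaternary; the dimension count ruling out $\mathbb{H}\perp\mathbb{H}$ (a $3$-dimensional subspace must meet a $2$-dimensional totally isotropic subspace nontrivially) is exactly right.

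For comparison, the paper does not give its own argument at all---it simply cites \cite[Theorem 2.3(2)]{HHX}. Your proof is therefore more self-contained. It is worth noting that your argument actually yields a sharper conclusion than the lemma as stated: an anisotropic ternary $V$ over $\Q_p$ fails to represent exactly one square class, namely $-d(V)\cdot(\Q_p^\times)^2$. This is precisely what the paper exploits later in Lemma~\ref{Dlem}, so your approach dovetails nicely with the subsequent use of the result.
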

\begin{proof}
    See \cite[Theorem 2.3(2)]{HHX}.
\end{proof}

\section{A more precise version of the main Theorem \ref{MAINTHEOREM1} }\label{sec:m-uni}

In this section, we will state the following result which is a more subtle version of Theorem \ref{MAINTHEOREM1}, and then use it to deduce Theorem \ref{MAINTHEOREM1} itself. Recall that throughout the paper, we write $\gamma_{p}$ for the least positive  quadratic non-residue modulo $p$ if $p$ is an odd prime, and we put $\gamma_{2}=7$.

\begin{thm}\label{ThmMain1} 
Let $m, D\in\N$, $D \neq 1$ squarefree, and $F=\Q(\!\sqrt{D})$. Let $L$ be a classical totally positive definite quadratic $\co_F$-lattice which represents every element of $m\co_F^{+}$. Then there exist primes $p_{1},p_{2}\leq 2m^{2}$, integers $2 \leq C_{i}\leq \gamma_{p_{i}}$, primes $q_{1},q_{2}$ such that $q_{i}\leq 2C_{i}m^{3}$ and integers $1 \leq D_{i}\leq q_{i}\gamma_{q_{i}}$ with the following properties:
\begin{enumerate}
\item if $\rank L\leq 3$, then $\Delta_{D}\leq 4C_{1}m^{2}\big(\omega_{D}-\lfloor \omega_{D}'\rfloor\big)$;
\item if $\rank L \leq 4$, then $\Delta_{D} \leq 4C_{1}m^{2} \bigl(2\omega_{D}-\lfloor 2\omega_{D}'\rfloor\bigr)$;
\item if $\rank L\leq 5$, then
\begin{itemize}
\item $\Delta_{D}\leq  4C_{1}m^{2} \bigl(C_{2}\omega_{D} - \lfloor C_{2}\omega_{D}'\rfloor\bigr)$ or
\item $\sqrt{\Delta_D}^3 \leq 36C_{2}m^{3} \sqrt{\Delta_{D}}^{2} + 18C_2m^{3} \sqrt{\Delta_{D}} + m^{3}$; 
\end{itemize}
\item if $\rank L \leq 6$, then
\begin{itemize}
\item $\Delta_{D} \leq 4C_{1}m^{2} \bigl(C_{2}\omega_{D} - \lfloor C_{2}\omega_{D}'\rfloor\bigr)$ or
\item $\sqrt{\Delta_D}^3 \leq 36C_{2}m^{3} \sqrt{\Delta_{D}}^{2} + 18C_2m^{3} \sqrt{\Delta_{D}} + m^{3}$ or
\item $\Delta_{D} \leq 4D_{1}m^{2} \max\bigl\{C_1,C_{2}\omega_{D} - \lfloor C_{2}\omega_{D}'\rfloor\bigr\}$;
\end{itemize}
\item if $\rank L  \leq 7$, then
\begin{itemize}
\item $\Delta_{D}\leq 4C_{1}D_{1} m^{2}$ or
\item $\Delta_{D} \leq 4\max\{C_1,D_1\} m^{2} \max\bigl\{C_{2}\omega_{D} - \lfloor C_{2}\omega_{D}'\rfloor, D_{2}\omega_{D} - \lfloor D_{2}\omega_{D}'\rfloor \bigr\}$ or
\item $\sqrt{\Delta_D}^3 \leq 36C_{2}m^{3} \sqrt{\Delta_{D}}^{2} + 18C_2m^{3} \sqrt{\Delta_{D}} + m^{3}$ or
\item $\sqrt{\Delta_{D}}^{4} \leq 192C_{2}D_{2}m^{4} \sqrt{\Delta_{D}}^{3} + 144C_{2}D_{2}m^{4}\sqrt{\Delta_{D}}^{2} + 96\max\{C_{2},D_{2}\}m^{4}\sqrt{\Delta_{D}} + m^{4}$.
\end{itemize}
\end{enumerate}
\end{thm}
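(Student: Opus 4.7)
The plan is to derive each bound by representing carefully chosen totally positive multiples of $m$ by vectors in $L$ and then exploiting the constraints carried by the resulting Gram matrix: it has entries in $\co_F$ (classicality), is totally positive semidefinite, and is positive definite whenever the chosen vectors are linearly independent. Accordingly the argument has two ingredients: (i) the independence corollary (Corollary~\ref{ind cor}, proved in Section~\ref{sec:tech}), which furnishes primes $p_{1},p_{2},q_{1},q_{2}$ in the stated ranges together with constants $C_{i} \leq \gamma_{p_{i}}$ and $D_{i} \leq q_{i}\gamma_{q_{i}}$ such that $L$ contains linearly independent representations of certain prescribed multiples of $m$; and (ii) a size argument that converts this independence into an upper bound on $\sqrt{\Delta_{D}}$ via Cauchy--Schwarz applied in each real embedding.

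To illustrate the core mechanism, consider the ternary case. Using Corollary~\ref{ind cor} I pick $v_{1},v_{2} \in L$ with $Q(v_{1})=m$ and $Q(v_{2}) = C_{1} m (\omega_{D} - \lfloor \omega_{D}'\rfloor)$, and such that when we write $\B(v_{1},v_{2})=a+b\omega_{D}$ with $a,b\in\Z$ one has $b\neq 0$. Cauchy--Schwarz in each embedding gives $|\B(v_{1},v_{2})| \leq \sqrt{m Q(v_{2})}$ and $|\B(v_{1},v_{2})'| \leq \sqrt{m Q(v_{2})'}$, and combining this with the identity $\B(v_{1},v_{2}) - \B(v_{1},v_{2})' = b\sqrt{\Delta_{D}}$ and the trivial bound $Q(v_{2})' = C_{1} m\{\omega_{D}'\} < C_{1} m$ yields
\[
\sqrt{\Delta_{D}} \;\leq\; |b|\sqrt{\Delta_{D}} \;\leq\; \sqrt{m Q(v_{2})} + \sqrt{m Q(v_{2})'} \;\leq\; \sqrt{C_{1} m^{2}}\,\bigl(\sqrt{\omega_{D} - \lfloor \omega_{D}'\rfloor}+1\bigr).
\]
Squaring and invoking the elementary inequality $(\sqrt{x} + 1)^{2} \leq 4x$ (valid for all $x \geq 1$, and $\omega_{D} - \lfloor \omega_{D}'\rfloor \geq 2$ for every squarefree $D\geq 2$) delivers the desired bound $\Delta_{D} \leq 4 C_{1} m^{2}(\omega_{D} - \lfloor \omega_{D}'\rfloor)$.

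The remaining rank bounds follow the same scheme, with larger target norms reflecting the weaker independence that Corollary~\ref{ind cor} can guarantee in higher rank. The rank $\leq 4$ bound comes from the identical $2\times 2$ argument applied to $Q(v_{2}) = C_{1} m(2\omega_{D} - \lfloor 2\omega_{D}'\rfloor)$. For ranks $\leq 5, 6, 7$, the proof branches according to how many linearly independent vectors the corollary controls and to which multiple of $\omega_{D}$ appears in the controlled off-diagonal entries; each disjunction in parts (3)--(5) of the statement corresponds to one such branch. When only two linearly independent vectors are controlled, the $2\times 2$ argument again produces a linear bound $\Delta_{D} \leq 4\max\{C_{1},D_{1}\}m^{2}(\cdots)$. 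When three or four are available, one expands the corresponding $3\times 3$ or $4\times 4$ Gram determinant, bounds every off-diagonal entry by Cauchy--Schwarz as above, and uses that $\det\Gr{k} \in \co_{F}$ is totally non-negative; this produces the cubic and quartic alternatives $\sqrt{\Delta_{D}}^{3} \leq \cdots$ and $\sqrt{\Delta_{D}}^{4} \leq \cdots$ appearing in the statement.

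The main obstacle is Corollary~\ref{ind cor} itself, which must simultaneously produce primes $p_{i}, q_{i}$ in the required ranges with the stated $\gamma$-bounds on $C_{i}, D_{i}$ and guarantee linear independence of the resulting representations in the strong sense that the relevant off-diagonal Gram entry has nontrivial $\omega_{D}$-component. This is where the careful local analysis of Section~\ref{sec:tech} enters. Once it is in place, the Gram-matrix estimates above are essentially bookkeeping, driven by the single identity $\alpha - \alpha' = b\sqrt{\Delta_{D}}$ for any $\alpha = a + b\omega_{D} \in \co_{F}$.
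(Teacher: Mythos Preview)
Your rank-$3$ argument rests on a claim that Corollary~\ref{ind cor} does not provide: that you can pick $v_{1},v_{2}$ with $Q(v_{1})=m$, $Q(v_{2})=C_{1}m(\omega_{D}-\lfloor\omega_{D}'\rfloor)$, and $\B(v_{1},v_{2})=a+b\omega_{D}$ with $b\neq 0$. Corollary~\ref{ind cor} speaks only about vectors with \emph{rational} norms $m,\,2m,\,C_{1}m,\,D_{1}m$ and says nothing about the $\omega_{D}$-coefficient of any bilinear value. Worse, in the regime $\Delta_{D}>4C_{1}m^{2}(\omega_{D}-\lfloor\omega_{D}'\rfloor)$ one has $Q(v_{i})Q(v_{j})\prec\Delta_{D}/4$ for all relevant pairs, so Lemma~\ref{LemNonIntBound}(2) forces every off-diagonal entry to lie in $\Z$; that is, $b=0$, and your Cauchy--Schwarz chain $\sqrt{\Delta_{D}}\leq|b|\sqrt{\Delta_{D}}\leq\cdots$ collapses at the first step. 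The ``strong independence'' you attribute to Section~\ref{sec:tech} is not there.

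The paper's mechanism runs in the opposite direction. For rank~$\leq 3$ one takes \emph{four} vectors $v_{1},v_{2},v_{3},v_{4}$ with $Q(v_{1})=m$, $Q(v_{2})=2m$, $Q(v_{3})=C_{1}m$ (all rational) and $Q(v_{4})=m(\omega_{D}-\lfloor\omega_{D}'\rfloor)$ (irrational). Under the contrapositive hypothesis $\Delta_{D}>4C_{1}m^{2}(\omega_{D}-\lfloor\omega_{D}'\rfloor)$, all off-diagonal Gram entries are in $\Z$ by Lemma~\ref{LemNonIntBound}(2); Corollary~\ref{ind cor} then makes $v_{1},v_{2},v_{3}$ independent, so $\det\Gr{3}\neq 0$, and Laplace expansion along the last row/column shows $\det\Gr{4}=(\text{rational})+\omega_{D}\,m\det\Gr{3}$ is irrational, hence nonzero, forcing rank~$\geq 4$. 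For ranks~$\leq 4,5,6,7$ one adds further vectors $v_{5},v_{6},v_{0},v_{7}$, writes $\Gr{k}=A+B\sqrt{\Delta_{D}}$, and shows positive definiteness via the block argument of Corollary~\ref{SumOfBlockPSD}: the $\omega_{D}$-part $B$ vanishes outside the bottom-right block and that block is shown positive definite using Corollaries~\ref{C2cor} and~\ref{D2cor} (this is where $C_{2},D_{2}$ and the cubic/quartic alternatives actually arise, not from a $3\times 3$ or $4\times 4$ Cauchy--Schwarz bound on $\det\Gr{k}$). So both the source of $b\neq 0$ (it lives on the diagonal, not off it) and the role of the determinant expansions (they bound $\det\widetilde{B}$ from below, not $\det\Gr{k}$ from above) differ from your sketch.
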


Note that all the parameters $p_i,q_i,C_i,D_i$ above depend on the lattice $L$. However, their ranges are absolutely bounded.

Sections \ref{sec:tech}, \ref{sec:45}, and \ref{sec:678} are spent by proving this theorem. Here we are going to simplify it. Many of the inequalities can be replaced thanks to the following lemma:

\begin{lem}\label{le:ineqSlayer}
Let $C,m,t \in \N$. If $\Delta_D \leq 4Cm^2 (t\omega_D - \lfloor t\omega_{D}'\rfloor)$, then $\sqrt{\Delta_D} < 4Cm^2 t + 1$.
\end{lem}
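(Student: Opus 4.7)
The plan is to reduce the inequality to a simple quadratic inequality in $x = \sqrt{\Delta_D}$ by exploiting the identity $\omega_D - \omega_D' = \sqrt{\Delta_D}$ recalled in Section \ref{sec:prel}.

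First I would rewrite the quantity on the right-hand side of the hypothesis. Writing $\{y\} := y - \lfloor y\rfloor \in [0,1)$ for the fractional part, we have
\[
t\omega_D - \lfloor t\omega_D'\rfloor = t(\omega_D - \omega_D') + (t\omega_D' - \lfloor t\omega_D'\rfloor) = t\sqrt{\Delta_D} + \{t\omega_D'\} < t\sqrt{\Delta_D} + 1.
\]
Plugging this into the assumption gives
\[
\Delta_D \leq 4Cm^2 t\sqrt{\Delta_D} + 4Cm^2\{t\omega_D'\} < 4Cm^2 t\sqrt{\Delta_D} + 4Cm^2.
\]

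Setting $x = \sqrt{\Delta_D}$, this is the quadratic inequality $x^2 < 4Cm^2 t\cdot x + 4Cm^2$. The remaining task is to deduce $x < 4Cm^2 t + 1$. I would argue by contradiction: suppose $x \geq 4Cm^2 t + 1$. Since $C,m,t$ are positive integers, we have $t \geq 1$, hence $4Cm^2 t + 1 > 4Cm^2$, so in particular $x > 4Cm^2$. Multiplying $x \geq 4Cm^2 t + 1$ by $x > 0$ then yields
\[
x^2 \geq (4Cm^2 t + 1)x = 4Cm^2 t \cdot x + x > 4Cm^2 t\cdot x + 4Cm^2,
\]
contradicting the displayed inequality. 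Therefore $x < 4Cm^2 t + 1$, as claimed.

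There is really no serious obstacle here; the only subtle point is to notice that $\{t\omega_D'\}$ is strictly less than $1$ (so the bound is strict), and that $t \geq 1$ is needed for the final step to go through. Both are built into the hypotheses.
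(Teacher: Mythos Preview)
Your proof is correct and follows essentially the same approach as the paper: both exploit $\omega_D - \omega_D' = \sqrt{\Delta_D}$ and the bound $\{t\omega_D'\}<1$ to reduce the hypothesis to a quadratic inequality in $\sqrt{\Delta_D}$, and then finish by elementary manipulation. The only cosmetic difference is that the paper concludes via $\frac{\Delta_D}{\sqrt{\Delta_D}+1} > \sqrt{\Delta_D}-1$ rather than your contradiction argument.
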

\begin{proof}
We have
\[
\Delta_{D}\leq 4Cm^{2}\left(t\omega_{D}-\lfloor t\omega_{D}'\rfloor\right) < 4Cm^{2}\left(t\omega_{D}-t\omega_{D}' +1\right) =4Cm^{2}\bigl(t\sqrt{\Delta_{D}} +1\bigr)\leq 4Cm^{2}t \bigl(\sqrt{\Delta_{D}}+1 \bigr).
\]
Thus $4Cm^{2}t\geq \frac{\Delta_{D}}{\sqrt{\Delta_{D}}+1}>\sqrt{\Delta_D}-1$, just as we needed.
\end{proof}

Now, as the first step in the proof of Theorem \ref{MAINTHEOREM1}, we present the following simplified version of Theorem \ref{ThmMain1}.

\begin{cor}\label{CorMain1} Let $m, D\in\N$, $D \neq 1$ squarefree, and $F=\Q(\!\sqrt{D})$. Let $L$ be a classical totally positive definite quadratic $\co_F$-lattice which represents every element of $m\co_F^{+}$. Then there exist primes $p_{1},p_{2}\leq 2m^{2}$, integers $2 \leq C_{i}\leq \gamma_{p_{i}}$, primes $q_{1},q_{2}$ such that $q_{i}\leq 2C_{i}m^{3}$ and integers $1 \leq D_{i}\leq q_{i}\gamma_{q_{i}}$ with the following properties:
\begin{enumerate}
 \item If $\rank L\leq 3$, then $\sqrt{\Delta_{D}} < 5C_{1}m^2$.
 \item If $\rank L\leq 4$, then $\sqrt{\Delta_{D}} < 9C_1m^2$.
 \item If $\rank L\leq 5$, then $\sqrt{\Delta_{D}} < 45 C_{2}m^3$.
 \item If $\rank L\leq 6$, then $\sqrt{\Delta_{D}} < \max\bigl\{45 C_{2}m^3, 5D_{1}\max\{C_{1},C_{2}\} m^2\bigr\}$.
 \item If $\rank L\leq 7$, then $\sqrt{\Delta_{D}} <\max\bigl\{5\max\{C_{1},D_{1}\}\max\{C_{2},D_{2}\}m^2,200C_{2}D_{2}m^4\bigr\}$.
\end{enumerate}
\end{cor}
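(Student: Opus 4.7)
The corollary is obtained from Theorem \ref{ThmMain1} by turning each bullet into a direct upper bound on $\sqrt{\Delta_D}$ and then combining the bullets within each case by taking a maximum.

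Each ``linear'' bullet of the form $\Delta_D \leq 4Cm^2\bigl(t\omega_D-\lfloor t\omega_D'\rfloor\bigr)$ is handled immediately by Lemma \ref{le:ineqSlayer}, which gives $\sqrt{\Delta_D}<4Cm^2t+1$; since $Cm^2t\geq 2$ in every application, this sharpens to $\sqrt{\Delta_D}<5Cm^2t$. Applying the template with the various choices of $(C,t)$ appearing in Theorem \ref{ThmMain1} produces cases (1) and (2) directly, together with the ``$5\cdot(\cdots)m^{2}$'' contributions that live inside the maxima in cases (3)--(5).

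For the cubic bullet $\sqrt{\Delta_D}^3\leq 36C_2m^3\sqrt{\Delta_D}^2+18C_2m^3\sqrt{\Delta_D}+m^3$, the plan is a ``leading-term dominates'' estimate: assuming for contradiction that $x:=\sqrt{\Delta_D}\geq 45C_2m^3$ and dividing through by $x^2$ yields
\[
x\leq 36C_2m^3+\frac{18C_2m^3}{x}+\frac{m^3}{x^2}<36C_2m^3+1+1<45C_2m^3,
\]
contradicting the assumption; hence $\sqrt{\Delta_D}<45C_2m^3$. The quartic bullet in case (5) is treated identically: testing $x=200C_2D_2m^4$ and dividing by $x^3$ gives $x\leq 192C_2D_2m^4+o(1)$, whereas $x$ was assumed to be $\geq 200C_2D_2m^4$, so $\sqrt{\Delta_D}<200C_2D_2m^4$. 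These two polynomial verifications are the only substantive computations in the proof.

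Within each of (3), (4), (5) the resulting per-bullet bounds are then combined by a maximum. For (4) and (5) this maximum is exactly what is written in the corollary. For (3) the linear bullet produces $\sqrt{\Delta_D}<5C_1C_2m^2$, which has to be absorbed into $45C_2m^3$; this reduces to checking $C_1\leq 9m$, which follows from $C_1\leq\gamma_{p_1}$ and $p_1\leq 2m^2$ via the trivial bound $\gamma_{p_1}\leq p_1-1$ for small $m$ and standard explicit non-residue estimates (e.g.\ Trevi\~no's) for larger $m$. I expect this small absorption check, and the constant-pushing in the cubic/quartic arguments, to be the only places where more than routine arithmetic is needed.
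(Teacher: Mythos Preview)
Your proposal is correct and follows essentially the same route as the paper: Lemma~\ref{le:ineqSlayer} for the ``linear'' bullets, a leading-term-dominates argument for the cubic and quartic bullets, and then combining within each case by a maximum, with the absorption $C_1\leq 9m$ in case~(3) handled via explicit non-residue bounds. The paper's only cosmetic difference is that it absorbs the lower-order terms of the polynomial inequalities into the leading coefficient (e.g.\ $36+18/\sqrt5+0.1<45$ and, for the quartic, a quadratic-formula step giving $96+\sqrt{96^2+188}<200$) rather than running your contradiction argument; both are equivalent.

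One small omission worth patching: in case~(5) of Theorem~\ref{ThmMain1} there are four bullets, not two, so ``the maximum is exactly what is written in the corollary'' is a slight overstatement. The first bullet $\Delta_D\leq 4C_1D_1m^2$ is not of the Lemma~\ref{le:ineqSlayer} shape and must be absorbed separately (trivially, since $2\sqrt{C_1D_1}\,m\leq 5\max\{C_1,D_1\}\max\{C_2,D_2\}m^2$), and the cubic bullet's bound $45C_2m^3$ must be absorbed into $200C_2D_2m^4$. Both are one-line checks, and the paper does them explicitly.
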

\begin{proof}
Let us consider each case separately.

\smallskip

$\rank L \le 3$: Here, Theorem \ref{ThmMain1}(1) and Lemma \ref{le:ineqSlayer} immediately give $\sqrt{\Delta_{D}}<4C_{1}m^{2}+1 < 5C_{1}m^{2}$.

\medskip

$\rank L \le 4$: This part is analogous to the previous one. The only difference is that we use Lemma \ref{le:ineqSlayer} with $t=2$ instead of $t=1$.

\medskip

$\rank L \le 5$: By Theorem \ref{ThmMain1}(3), there are two possibilities in this case. The former gives
\begin{align*}
    \sqrt{\Delta_{D}}<4C_{1}C_{2}m^{2}+1 < 5C_{1}C_{2}m^{2}
\end{align*}
by Lemma \ref{le:ineqSlayer}. In the latter, since $C_2 \geq 2$ and $\Delta_D \geq 5$, we get
\begin{align*}
    \sqrt{\Delta_{D}}^{3}\leq 36C_{2}m^{3} \sqrt{\Delta_{D}}^{2} + 18C_2m^{3} \sqrt{\Delta_{D}} + m^{3} \leq\Bigl(36+\frac{18}{\sqrt5} +\frac{1}{10} \Bigr)C_{2}m^{3} \sqrt{\Delta_{D}}^{2} < 45 C_{2}m^{3} \sqrt{\Delta_{D}}^{2},
\end{align*}
so $\sqrt{\Delta_{D}}< 45 C_{2}m^{3}$. Therefore, we have
\begin{align*}
\sqrt{\Delta_{D}}< \max\{5C_{1}C_{2}m^2, 45 C_{2}m^3\} = C_2m^2\max\{5C_{1}, 45 m\}.
\end{align*}
However, we easily see that the second term is always bigger. If $p_1=2$, then $C_1\leq 7$ and the inequality holds; otherwise $C_1 \leq \gamma_{p_1} \leq 1.4 (2m^2)^{1/4} \log(2m^2)$ by Lemma \ref{LemNonresidueBound} below, and for positive $m$, this can be bounded by $3m$. Thus, $\max\{5C_1,45 m\} = 45 m$.

This leads to the conclusion that if $\rank L\leq 5$, then $\sqrt{\Delta_{D}}< 45 C_{2}m^3$.

\medskip

$\rank L \le 6$: The last inequality from Theorem \ref{ThmMain1}(4) can be treated using Lemma \ref{le:ineqSlayer} the same way as before. Together with the bounds from the case of $\rank L  \le 5$ we get that if $\rank L\leq 6$ then $\sqrt{\Delta_{D}} < \max\bigl\{45 C_{2}m^3, 5D_{1}\max\{C_{1},C_{2}\} m^2\bigr\}$.

\medskip

$\rank L \le 7$: From reasoning analogous to the previous parts, we have that if the first, second and third inequality from Theorem \ref{ThmMain1}(5) are satisfied, then respectively:
\begin{itemize}
    \item $\Delta_{D}\leq 4C_{1}D_{1}m^{4}$,
    \item $\sqrt{\Delta_{D}}< 5\max\{C_{1},D_{1}\}\max\{C_{2},D_{2}\}m^2$,
    \item $\sqrt{\Delta_{D}}< 45 C_{2}m^3$.
\end{itemize}
For the last one, note that since $C_2\geq 2$ and $\Delta_{D}\geq 5$, we have
\begin{align*}
    \sqrt{\Delta_{D}}^{4} &\leq 192C_{2}D_{2}m^{4}\sqrt{\Delta_{D}}^{3} + 144C_{2}D_{2}m^{4}\sqrt{\Delta_{D}}^{2} + 96\max\{C_{2},D_{2}\}m^{4}\sqrt{\Delta_{D}} + m^{4}\\
    &< 192C_{2}D_{2}m^{4}\sqrt{\Delta_{D}}^{3}+\Bigl(144 + \frac{96}{\sqrt{5}} + \frac{1}{10}\Bigr)C_{2}D_{2}m^{4}\sqrt{\Delta_{D}}^{2} \\
    &<192C_{2}D_{2}m^{4}\sqrt{\Delta_{D}}^{3} + 188C_{2}D_{2}m^{4}\sqrt{\Delta_{D}}^{2}.
\end{align*}
Hence, in this case,
\[
    \sqrt{\Delta_{D}}^{4} < 192C_{2}D_{2}m^{4}\sqrt{\Delta_{D}}^{3}+188C_{2}D_{2}m^{4}\sqrt{\Delta_{D}}^{2},
\]
from which $\sqrt{\Delta_D}<(96+\sqrt{96^{2}+188})C_{2}D_{2}m^{4}< 200C_{2}D_{2}m^4$.

Therefore, if $\rank L\leq 7$, then
\[
    \sqrt{\Delta_{D}} <\max\left\{5\max\{C_{1},D_{1}\}\max\{C_{2},D_{2}\}m^2,200C_{2}D_{2}m^4\right\},
\]
since $5\max\{C_{1},D_{1}\}\max\{C_{2},D_{2}\}m^2>\sqrt{4C_{1}D_{1}}m^2$ and $200C_{2}D_{2}m^4 > 45 C_{2}m^3$.
\end{proof}

Let us prove the cases $m=1$ and $m=2$ of Theorem \ref{MAINTHEOREM1} separately since they are easier than the general case.

\begin{proof}[Proof of Theorem $\ref{MAINTHEOREM1}$ for $m\leq 2$]
From the assumptions of Corollary \ref{CorMain1} (which are the same as for Theorem \ref{ThmMain1}), the following bounds for quantities $p_{i}$, $q_{i}$, $C_{i}$ and $D_{i}$ are clear if $m\in\{1,2\}$:
\begin{equation}\label{BoundsCpqm}
\begin{split}
    p_{i} &\leq \left\{\begin{array}{ll}
        2 & \text{if } m=1, \\
        7 & \text{if } m=2,
    \end{array}\right. \\
    C_{i} &\leq\left\{\begin{array}{ll}
        5 & \text{if } p_{i}=2 \qquad \text{(by Remark \ref{rem5not7}}), \\
        \gamma_{p_{i}}\leq 3 & \text{if } p_{i}\in\{3,5,7\},
    \end{array}\right. \\
    q_{i} &\leq 2C_{i}m^{3},\\
    D_{i} &\leq q_{i}\gamma_{q_{i}}.
\end{split}
\end{equation}

We begin with the case $m=1$. Inequalities \eqref{BoundsCpqm} imply $q_{i}\leq 10$, that is, either $q_{i}=2$ and $D_{i}\leq 14$, or $3\leq q_{i}\leq 7$ and $D_{i}\leq 21$. Therefore, Corollary \ref{CorMain1} directly implies that
\begin{itemize}
    \item if $\rank L\leq 3$, then $\sqrt{\Delta_{D}} < 5^2$,
    \item if $\rank L\leq 4$, then $\sqrt{\Delta_{D}} < 3^2\cdot 5$,
    \item if $\rank L\leq 5$, then $\sqrt{\Delta_{D}} < 3^2 \cdot 5^2$,
    \item if $\rank L\leq 6$, then $\sqrt{\Delta_{D}} < \max\left\{3^2 \cdot 5^2, 5^2\cdot 21\right\}=3\cdot 5^2\cdot 7$,
    \item if $\rank L\leq 7$, then $\sqrt{\Delta_{D}} < \max\left\{5\cdot 21\cdot 21, 200\cdot 5\cdot 21\right\}=2^3\cdot 3\cdot 5^3 \cdot 7$.
\end{itemize}

Let us assume that $m=2$. If $p_{i}=2$, then \eqref{BoundsCpqm} implies $q_{i}\leq 80$, and using the table \cite{Ben} we get $D_{i}\leq 7\cdot 71$. Similarly, if $p_{i}\in\{3,5,7\}$, then $q_{i}\leq 48$ and $D_{i}\leq 5\cdot 47$. Corollary \ref{CorMain1} again gives that
\begin{itemize}
    \item if $\rank L\leq 3$, then $\sqrt{\Delta_{D}}< 2^2\cdot 5^2$,
    \item if $\rank L\leq 4$, then $\sqrt{\Delta_{D}}< 2^2\cdot 3^2\cdot 5$,
    \item if $\rank L\leq 5$, then $\sqrt{\Delta_{D}}< 2^3\cdot 3^2 \cdot 5^2$,
    \item if $\rank L\leq 6$, then $\sqrt{\Delta_{D}}< \max\left\{2^3\cdot 3^2 \cdot 5^2, 2^2\cdot 5^2\cdot 7\cdot 71\right\}=2^2\cdot 5^2\cdot 7\cdot 71$,
    \item if $\rank L\leq 7$, then $\sqrt{\Delta_{D}}< \max\left\{2^2\cdot 5\cdot 7^2\cdot 71^2, 2^4\cdot 200\cdot 5\cdot 7\cdot 71\right\}=2^7\cdot  5^3\cdot 7\cdot 71$.\qedhere
\end{itemize}
\end{proof}

Before we explain how the case of $m\geq 3$ of Theorem \ref{MAINTHEOREM1} follows from Theorem \ref{ThmMain1} and Corollary \ref{CorMain1}, we will need the following bounds for the least quadratic non-residue.

\begin{lem}\label{LemNonresidueBound}
Let $p$ be an odd prime. Then for every $\varepsilon >0$:
\begin{enumerate}
\item unconditionally: $\gamma_{p}\ll_{\varepsilon} p^{\frac{1}{4\sqrt{\mathrm{e}}}+\varepsilon}$ (Burgess's bound), \label{it:Burgess}
\item under GRH: $\gamma_{p}< 2(\log p)^{2}$. \label{it:GRHexpli}
\item unconditionally: $\gamma_p < 1.4 p^{1/4} \log p$. \label{it:uncond}
\end{enumerate}
\end{lem}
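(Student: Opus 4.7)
My plan is to derive each part from the indicated literature reference; the lemma simply packages three standard estimates on $\gamma_p$ in the form that Corollary~\ref{CorMain1} will consume.

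For part (1), I would run Vinogradov's smooth-number argument on top of Burgess's character sum bound. Since $\gamma_p$ is the \emph{least} quadratic non-residue modulo~$p$, every prime $q<\gamma_p$ is a quadratic residue, and hence every $\gamma_p$-smooth positive integer is itself a quadratic residue modulo~$p$. Counting these smooth integers in $[1,N]$ by means of the Dickman function $\rho$ produces at least $(\rho(u)+o(1))N$ quadratic residues, where $u=\log N/\log\gamma_p$. On the other hand, Burgess's estimate \cite{Bu} yields
\[
\#\{1\leq n\leq N : n\text{ is a QR mod }p\} \;=\; \tfrac{N}{2}+O_{r,\varepsilon}\bigl(N^{1-1/r}\,p^{(r+1)/(4r^{2})+\varepsilon}\bigr)
\]
for each integer $r\geq 2$. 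Choosing $N=p^{1/4+\varepsilon}$ and letting $r\to\infty$ makes the Burgess error $o(N)$, so we must have $\rho(u)\leq 1/2+o(1)$. On the interval $u\in[1,2]$ one has the explicit formula $\rho(u)=1-\log u$, so the critical value is $u_{0}=\sqrt{\mathrm{e}}$. Unwinding this inequality gives $\gamma_p\leq p^{1/(4\sqrt{\mathrm{e}})+\varepsilon}$ for every $\varepsilon>0$, which is (1).

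Parts (2) and (3) are each a direct quotation from the literature. For (2) I would cite the fully explicit GRH-conditional bound of Lamzouri, Li and Soundararajan \cite{LLS}, which sharpens Ankeny's classical $\gamma_p\ll(\log p)^{2}$ \cite{An}: their theorem yields $\gamma_p<2(\log p)^{2}$ for all primes beyond an explicit threshold, and the finitely many remaining small $p$ are handled by a short numerical check. Item (3) is precisely the main result of Trevi\~no \cite{Tre}, who tracked every constant through the Burgess argument and verified the tail of small primes computationally, thereby obtaining $\gamma_p<1.4\,p^{1/4}\log p$ unconditionally for every odd~$p$. The only point that calls for genuine care is matching the hypotheses of each cited theorem exactly to the form stated here; the potential obstacle in part~(1), namely optimizing the Burgess parameter $r$ and invoking the Dickman asymptotic on the tight range $u\in[1,2]$, is routine because any loss is absorbed by the $\varepsilon$ in the exponent.
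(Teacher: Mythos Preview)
Your proposal is correct and matches the paper's approach: the paper's proof is simply a citation list, quoting \cite[Theorem~2]{Bu} for (1), and observing that (2) and (3) are trivial for $p=3$ and follow for $p\geq 5$ from \cite[Corollary~1.1]{LLS} and \cite{Tre}, respectively. The only difference is that for part~(1) you sketch the Vinogradov--Burgess argument rather than cite it; this is fine but unnecessary, since Burgess's 1957 paper already states the bound $\gamma_{p}\ll_{\varepsilon}p^{1/(4\sqrt{\mathrm{e}})+\varepsilon}$ as a theorem.
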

\begin{proof}
The first bound is from \cite[Theorem 2]{Bu}. The bounds (2) and (3) are trivial for $p=3$ and for $p\geq 5$ follow from \cite[Corollary 1.1]{LLS} and \cite{Tre}, respectively.
\end{proof}

\begin{proof}[Proof of Theorem $\ref{MAINTHEOREM1}$ for $m\geq 3$]
We will need bounds in terms of $m$ for quantities $p_{i}$, $q_{i}$, $C_{i}$ and $D_{i}$, valid for $m \geq 3$. If $p_{i},q_{i}\geq 3$ we have by Lemma \ref{LemNonresidueBound}\eqref{it:uncond}:
\begin{equation}\label{BoundsCpq}
\begin{split}
p_{i} & \leq 2m^{2}, \\
C_{i} & \leq \gamma_{p_{i}} < 1.4p_{i}^{1/4}\log p_{i} < 5m^{1/2}\log m, \\
q_{i} & \leq 2C_{i}m^{3} < 10m^{7/2}\log m, \\
\gamma_{q_{i}} & < 1.4q_{i}^{1/4}\log q_{i} <1.4\cdot 10^{1/4} m^{7/8}(\log m)^{1/4} \log\left( 10m^{7/2}\log m \right)\\
& <1.4\cdot 10^{1/4} m^{7/8}(\log m)^{1/4} \log\left( m^{\log 10/\log 3 + 7/2 + 1}\right) < 20 m^{7/8}(\log m)^{5/4}, \\
D_{i} & \leq q_{i}\gamma_{q_{i}} < 200m^{35/8}(\log m)^{9/4}.
\end{split}
\end{equation}
In the case of $p_{i}=2$, we still have $C_{i}\leq \gamma_2 = 7 < 5m^{1/2}\log m$, as $m\geq 3$. Similarly, if $q_{i}=2$, then $D_{i}\leq 2\cdot \gamma_2 = 2\cdot7 < 200m^{35/8}(\log m)^{9/4}$. Therefore we can use inequalities \eqref{BoundsCpq} for all $p_{i}$ and $q_{i}$.

Now denote $\kappa:=\frac{1}{\sqrt{\mathrm{e}}}\approx 0.6065$ and $\lambda:=\frac{\kappa^{2}}{4}+\frac{5\kappa}{2}-1\approx 0.6083$. Inequalities \eqref{BoundsCpq} and Lemma \ref{LemNonresidueBound}\eqref{it:Burgess} give inexplicit bounds:
\begin{equation}\label{BoundCpqInex}
\begin{split}
    C_{i} &\leq \gamma_{p_{i}} \ll_{\varepsilon} p_{i}^{\frac{1}{4\sqrt\mathrm{e}}+\frac{\varepsilon}{2}} \ll m^{\frac{1}{2\sqrt\mathrm{e}}+\varepsilon} = m^{\frac{\kappa}{2}+\varepsilon}, \\
    D_i &\leq q_i\gamma_{q_i} \ll_{\varepsilon} q_i \cdot q_i^{\kappa/4+\varepsilon} \leq (2C_im^3)^{1+\kappa/4+\varepsilon} \ll_{\varepsilon} (m^{\kappa/2+\varepsilon} \cdot m^3)^{1+\kappa/4+\varepsilon} = m^{(3+\kappa/2+\varepsilon)(1+\kappa/4+\varepsilon)}\\ &= m^{3+5\kappa/4 + \kappa^2/8 + \tilde{\varepsilon}} = m^{7/2+\lambda/2+\tilde{\varepsilon}}.
\end{split}
\end{equation}

Similarly, for the conditional bounds we use Lemma \ref{LemNonresidueBound}\eqref{it:GRHexpli}:
\begin{equation}\label{BoundCpqCond}
\begin{split}
    C_{i} & \leq \gamma_{p_{i}} < 2(\log p_{i})^{2}\leq 2\bigl(\log (2m^{2})\bigr)^{2}\leq 2\cdot 3^{2} (\log m)^{2} \ll (\log m)^{2}, \\
    D_{i} &\leq 2C_i m^3 \cdot \gamma_{q_{i}} \ll (\log m)^{2}m^3 \cdot (\log q_{i})^{2} \ll m^{3} (\log m)^{4};
\end{split}
\end{equation} 
again, the bound $C_i \leq 2\cdot 3^2 (\log m)^2$ holds even if $p_i=2$. 

We are ready to find the bounds on $\Delta_D$. The cases $\rank L\leq 3$, $\rank L\leq 4$ and $\rank L\leq 5$ follow immediately from parts (1), (2) and (3) of Corollary \ref{CorMain1} by using the bounds for $C_1, C_2$ from \eqref{BoundsCpq}, \eqref{BoundCpqInex} and \eqref{BoundCpqCond}.

Now suppose that $\rank L\leq 6$. Hypothetically it may happen that the first term in the expression $\max\bigl\{45 C_2m^3, 5D_1\max\{C_1,C_2\}m^2\bigr\}$ is the bigger one. However, if we replace $C_1, C_2$ and $D_1$ by our bounds from \eqref{BoundsCpq}, \eqref{BoundCpqInex} and \eqref{BoundCpqCond}, then clearly the second term will be the important one, since it contains the bound for $D_1$, which is large. Therefore, to shorten the text, we use only the second term here. Explicit bounds \eqref{BoundsCpq} give
\begin{align*}
    \sqrt{\Delta_D} &< \max\bigl\{45 C_2m^3,5D_{1}\max\{C_{1},C_{2}\} m^2\bigr\} \\
    & < 5\cdot 200 m^{35/8}(\log m)^{9/4} \cdot 5 m^{1/2}\log m \cdot m^2 = 2^3\cdot 5^4 m^{55/8}(\log m)^{13/4}.
\end{align*}
For unconditional but inexplicit bound, we use \eqref{BoundCpqInex}. For every $\varepsilon>0$:
\[
    \sqrt{\Delta_D} \ll \max\bigl\{C_2m^3,D_{1}\max\{C_{1},C_{2}\} m^2\bigr\} \ll_{\varepsilon} m^{7/2+\lambda/2+\varepsilon} \cdot m^{\kappa/2 + \varepsilon} \cdot m^2 = m^{(11+\kappa+\lambda)/2+2\varepsilon}.
\]

Analogously, under the GRH we have by \eqref{BoundCpqCond}:
\[
    \sqrt{\Delta_{D}}\ll m^3(\log m)^4\cdot (\log m)^2\cdot m^2 = m^5(\log m)^6.
\]

If $\rank L\leq 7$, then using the inequalities \eqref{BoundsCpq}, Corollary \ref{CorMain1}(5) yields
\begin{align*}
\sqrt{\Delta_{D}} &< \max\bigl\{5\max\{C_{1},D_{1}\}\max\{C_{2},D_{2}\}m^2,200C_{2}D_{2}m^4\bigr\}\\
&< \max\bigl\{ 5 \cdot \bigl( 200m^{35/8}(\log m)^{9/4} \bigr)^2 \cdot m^2 , 200 \cdot 5m^{1/2}\log m \cdot 200m^{35/8}(\log m)^{9/4} \cdot m^4 \bigr\}\\
&= 5\cdot 200^{2}m^{35/8}(\log m)^{9/4}\cdot m^2\cdot  \max\bigl\{m^{35/8}(\log m)^{9/4} , m^{1/2}\log m \cdot m^2 \bigr\}\\
&= 5\cdot200^2 m^{35/8}(\log m)^{9/4} \cdot m^2 \cdot m^{35/8}(\log m)^{9/4}\\
&= 5\cdot200^2 m^{43/4}(\log m)^{9/2}.
\end{align*}

The inexplicit bounds follow from Corollary \ref{CorMain1}(5) and inequalities \eqref{BoundCpqInex} and \eqref{BoundCpqCond}:
\begin{align*}
    \sqrt{\Delta_{D}} &\ll_{\varepsilon} \max\bigl\{m^{7/2+\lambda/2 +\varepsilon} \cdot m^{7/2+\lambda/2 +\varepsilon} \cdot m^2, m^{\kappa/2+\varepsilon} \cdot m^{7/2+\lambda/2 +\varepsilon} \cdot m^4\bigr\} = m^{9+\lambda+2\varepsilon}, \\
    \sqrt{\Delta_{D}} &\ll \max\left\{m^3(\log m)^4\cdot m^3(\log m)^4\cdot m^2, (\log m)^2\cdot m^3(\log m)^4\cdot m^4 \right\} = m^8(\log m)^8.
\end{align*}
This concludes the proof in the case of $m\geq 3$.
\end{proof}

In this highly technical section, we showed how to deduce the main Theorem \ref{MAINTHEOREM1} from the more subtle version, namely Theorem \ref{ThmMain1}. The proof of Theorem \ref{ThmMain1} itself is divided into five parts, namely Theorems \ref{rank 3}, \ref{rank 4}, \ref{rank 5}, \ref{rank 6} and \ref{rank 7}. Before we start with its proof, we need to prepare several auxiliary results.

\section{Technical estimates for linear independence}\label{sec:tech}

In this section, we start the proof of Theorem \ref{ThmMain1} by presenting a series of lemmas necessary for the proofs in Sections \ref{sec:45} and \ref{sec:678}. 

The following simple but useful observation is specific to working over real quadratic fields. Also, note that part (2) requires the lattice to be classical, and that is the reason why we need the same assumption in our main Theorem \ref{ThmMain1}.

\begin{lem}\label{LemNonIntBound}
Let $D>1$ be a squarefree integer, and $F=\Q(\!\sqrt{D})$.

 \begin{enumerate}
     \item If $\alpha\in \co_F\setminus \Z$, then $\max\{|\alpha|,|\alpha'|\}\geq \frac{\sqrt{\Delta_{D}}}{2}$.
     \item Let $L$ be a classical totally positive definite $\co_F$-lattice and $v,w \in L$ such that $Q(v)Q(w) \prec \frac{\Delta_D}4$. Then $\B(v,w) \in \Z$.
 \end{enumerate}
\end{lem}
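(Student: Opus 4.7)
For part (1), the plan is to use the fact that $\co_F=\Z[\omega_D]$ together with the identity $\omega_D-\omega_D'=\sqrt{\Delta_D}$ mentioned in the preliminaries. Any $\alpha\in\co_F\setminus\Z$ can be written as $\alpha=a+b\omega_D$ with $a,b\in\Z$ and $b\neq 0$. Then $\alpha-\alpha'=b\sqrt{\Delta_D}$, so $|\alpha-\alpha'|\geq \sqrt{\Delta_D}$. The triangle inequality gives $|\alpha-\alpha'|\leq |\alpha|+|\alpha'|\leq 2\max\{|\alpha|,|\alpha'|\}$, which is the desired bound.

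For part (2), the plan is to combine part (1) (in contrapositive form) with the Cauchy–Schwarz inequality for totally positive definite quadratic spaces. Since $L$ is classical, we automatically have $\B(v,w)\in\co_F$. Applying Cauchy–Schwarz in each of the two real embeddings $\sigma_i$ of $F$ yields
\[
\sigma_i\bigl(\B(v,w)\bigr)^{2} \;\leq\; \sigma_i\bigl(Q(v)Q(w)\bigr) \;<\; \frac{\Delta_D}{4},
\]
so both $|\B(v,w)|$ and $|\B(v,w)'|$ are strictly smaller than $\sqrt{\Delta_D}/2$. By the contrapositive of part (1), any element of $\co_F$ whose absolute values in both embeddings are below $\sqrt{\Delta_D}/2$ must lie in $\Z$. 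Hence $\B(v,w)\in\Z$.

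I do not expect any serious obstacle here: the argument is essentially an application of the triangle inequality for (1) and of Cauchy–Schwarz for (2). The only mildly delicate point worth double-checking is that Cauchy–Schwarz indeed holds embedding-wise (as noted in the preliminaries for totally positive semidefinite spaces), and that classicality is genuinely necessary for (2), since without it one would only have $\B(v,w)\in\frac12\co_F$, which would force a weaker bound and explain the hypothesis in Theorem~\ref{ThmMain1}.
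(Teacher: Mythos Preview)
Your proof is correct, and part (2) matches the paper's argument exactly. For part (1) the paper instead splits into the two cases $D\equiv 1\pmod 4$ and $D\equiv 2,3\pmod 4$, writing $\alpha$ explicitly in terms of $\sqrt{D}$ and observing directly that $\max\{|\alpha|,|\alpha'|\}=\tfrac{|a|+|b|\sqrt{D}}{2}$ (or $|a|+|b|\sqrt{D}$). Your use of $\alpha-\alpha'=b\sqrt{\Delta_D}$ together with the triangle inequality is a slightly slicker, case-free variant that achieves the same bound; the paper's version, on the other hand, gives an explicit formula for the maximum rather than just a lower bound.
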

\begin{proof}
(1) First assume $D \equiv 1 \pmod4$. We can write $\alpha = \frac{a+b\sqrt{D}}{2}$ with $a,b \in \Z$. Since $\alpha \notin\Q$, we have $b \neq 0$. Thus
\[
\max\{|\alpha|,|\alpha'|\} = \frac{|a|+|b|\sqrt{D}}{2} \geq \frac{0+\sqrt{D}}{2} = \frac{\sqrt{\Delta_{D}}}{2},
\]
just as we wanted. If $D \equiv 2,3 \pmod4$, then analogously, $\max\{|\alpha|,|\alpha'|\} \geq \sqrt{D} = \frac{\sqrt{\Delta_D}}{2}$.

(2) The Cauchy--Schwarz inequality yields $\B(v,w)^2 \preceq Q(v)Q(w) \prec \frac{\Delta_D}4$. This precisely says that $\max \{|\B(v,w)|, |\B(v,w)'|\} < \frac{\sqrt{\Delta_D}}{2}$. Thus $\B(v,w) \in \Z$ by (1).
\end{proof}

\smallskip

For the rest of the section, let $F$ be any totally real number field. Remember our convention: Whenever, in a totally positive definite quadratic lattice, vectors $v_1, \ldots, v_k$ are given, we denote the Gram matrix $\begin{pmatrix} \B(v_i,v_j) \end{pmatrix}_{1 \le i, j \le k}$ as $\Gr{k}$. This matrix is always positive semidefinite. Now, let us recall the following well-known property.

\begin{lem} \label{gram}
Let $L$ be a totally positive definite $\co_F$-lattice. Then the following conditions for vectors $v_1,\ldots,v_n \in L$ are equivalent:
\begin{enumerate}
    \item $v_1,\ldots,v_n$ are linearly independent,
    \item the Gram matrix $\Gr{n}$ is positive definite,
    \item $\det \Gr{n} \neq 0$.
\end{enumerate}
The same equivalences hold if the quadratic lattice $L$ is replaced by a quadratic space $V$.
\end{lem}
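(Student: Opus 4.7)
My plan is first to reduce to the quadratic space setting: since the Gram matrix and linear independence of $v_1,\dots,v_n$ depend only on their $F$-span inside $V=L\otimes F$, and the quadratic map on $L$ extends uniquely to $V$, the statement for lattices follows at once from the statement for quadratic spaces. So I would assume from the outset that $v_1,\dots,v_n$ lie in a totally positive definite quadratic space $V$ over $F$.

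I would then prove the three implications (2) $\Rightarrow$ (3) $\Rightarrow$ (1) $\Rightarrow$ (2) in turn. The first is immediate: a totally positive definite symmetric matrix has, under every embedding, only positive eigenvalues, hence positive determinant, so $\det\Gr{n}\succ 0$ and in particular is nonzero. For (3) $\Rightarrow$ (1) I argue by contrapositive. A nontrivial $F$-linear relation $\sum_i c_i v_i=0$ gives, after applying $\B(\,\cdot\,,v_j)$ to both sides and using bilinearity, the identity $\sum_i c_i\B(v_i,v_j)=0$ for each $j$; that is, $(c_1,\dots,c_n)^{\mathrm T}$ is a nonzero vector in the kernel of $\Gr{n}$, forcing $\det\Gr{n}=0$.

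The remaining implication (1) $\Rightarrow$ (2) is where total positive definiteness is used. Given any nonzero $x=(x_1,\dots,x_n)\in F^n$, linear independence ensures that $w:=\sum_i x_i v_i$ is a nonzero element of $V$, so $x^{\mathrm T}\Gr{n}x=Q(w)\succ 0$. By the alternative characterization of totally positive definite matrices recorded in Section \ref{sec:prel}, this already shows $\Gr{n}$ is totally positive definite, and hence positive definite in every embedding.

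I do not expect a serious obstacle here, as this is a standard fact from the linear algebra of bilinear forms. The only mild subtlety is being careful about what \emph{positive definite} means for a matrix with entries in $F$: the two natural formulations (testing $x^{\mathrm T}Mx$ for nonzero $x\in F^n$ versus requiring $\sigma_i(M)$ to be positive definite as a real matrix for every embedding $\sigma_i$) are equivalent by the remark in Section \ref{sec:prel}, and invoking this equivalence is what lets me conclude (2) from (1) directly, without any separate density or approximation argument.
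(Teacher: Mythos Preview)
The paper does not prove this lemma at all; it is introduced with ``let us recall the following well-known property'' and left without proof. Your argument is correct and supplies exactly the standard linear-algebra proof one would expect.

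One small point worth tightening: the paper intends condition (2) to mean positive definite in \emph{one} real embedding, not totally positive definite---this is made explicit in the remark immediately following the lemma (``to test linear independence, it is enough to check the positive definiteness in one embedding only''). In your (2)$\Rightarrow$(3) step you assume total positive definiteness. This does not break the cycle of implications, since your (1)$\Rightarrow$(2) in fact establishes total positive definiteness, which is stronger than (2) as the paper intends it; and your (2)$\Rightarrow$(3) argument goes through verbatim under the weaker hypothesis anyway (positive definiteness in a single embedding already forces the determinant to be nonzero there, hence nonzero in $F$). So the proof stands, but you may wish to phrase (2)$\Rightarrow$(3) for a single embedding to match the paper's convention precisely.
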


Note that the Gram matrix is in fact \emph{totally} positive semidefinite, i.e., positive semidefinite in all embeddings; but according to the lemma, to test linear independence, it is enough to check the positive definiteness in one embedding only. This is quite clear, but worth bearing in mind.

Before continuing, observe that if $\Gr{k} \in M_k(\Q)$, then the $\Q$-span of the vectors $v_i$ is a well-defined quadratic space over $\Q$. The next auxiliary result studies this situation: it is essentially an argument which deduces linear independence over $F$ from independence over $\Q$.

\begin{lem} \label{indOverF}
Let $\alpha \in \Z$. Let $L$ be a totally positive definite $\co_F$-lattice and $v_1, \ldots, v_k \in L$ be linearly independent vectors such that $\Gr{k} \in M_{k}(\Q)$ and that the quadratic space $\Q v_1 + \cdots + \Q v_k$ does not represent $\alpha$. Assume further that $v_{k+1} \in L$ is such that $Q(v_{k+1})=\alpha$ and $\Gr{k+1} \in M_{k+1}(\Q)$. Then $v_1, \ldots, v_{k+1}$ are linearly independent (over $F$).    
\end{lem}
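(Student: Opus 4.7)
The plan is to argue by contradiction: suppose $v_1,\ldots,v_{k+1}$ were $F$-linearly dependent. Since $v_1,\ldots,v_k$ are linearly independent by hypothesis, such a dependence would have to express $v_{k+1}$ as an $F$-linear combination
\[
v_{k+1} = c_1 v_1 + \cdots + c_k v_k, \qquad c_i \in F.
\]
The aim is to show the $c_i$ are actually rational, because then $\alpha = Q(v_{k+1})$ is represented in the $\Q$-quadratic space $\Q v_1 + \cdots + \Q v_k$, contradicting the hypothesis. (To make sense of this, first note that $\Q v_1 + \cdots + \Q v_k$ really is a quadratic space over $\Q$: for $r_i \in \Q$, the quadratic form evaluates to $Q(\sum r_i v_i) = \sum_{i,j} r_i r_j \B(v_i,v_j) \in \Q$ thanks to the rationality of $\Gr{k}$.)

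To pin down that the $c_i$ are rational, I would take the inner product of the relation $v_{k+1} = \sum c_i v_i$ with each $v_\ell$, $1 \leq \ell \leq k$. This yields the linear system
\[
\Gr{k}\, \vec c = \vec b, \qquad \vec b = \bigl(\B(v_{k+1},v_1),\ldots,\B(v_{k+1},v_k)\bigr)^{\mathrm T}.
\]
Every entry of $\vec b$ is rational because $\Gr{k+1} \in M_{k+1}(\Q)$, and $\Gr{k} \in M_k(\Q)$ by hypothesis. By Lemma \ref{gram} applied to the $F$-linearly independent vectors $v_1,\ldots,v_k$, the matrix $\Gr{k}$ is invertible over $F$; since $\det \Gr{k}$ is a nonzero element of $\Q$, the matrix is in fact invertible over $\Q$. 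Therefore $\vec c = \Gr{k}^{-1} \vec b \in \Q^k$.

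With $c_i \in \Q$, the element $\alpha = Q(v_{k+1}) = Q(c_1 v_1 + \cdots + c_k v_k)$ is represented by the $\Q$-quadratic space $\Q v_1 + \cdots + \Q v_k$, contradicting the hypothesis. The whole argument is routine; the only step that takes a moment to check is the rationality of $\vec c$, which is immediate once one notices that $\Gr{k}$ is a rational matrix with nonzero rational determinant. No step presents a genuine obstacle.
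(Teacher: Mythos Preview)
Your proof is correct. Both you and the paper argue by contradiction and end by exhibiting $v_{k+1}$ as a \emph{rational} combination of $v_1,\ldots,v_k$, but the linear-algebraic mechanism differs slightly. The paper observes that $\det\Gr{k+1}=0$ (via Lemma~\ref{gram}) and, since $\Gr{k+1}\in M_{k+1}(\Q)$, picks a nonzero $a\in\Z^{k+1}$ with $a^{\mathrm T}\Gr{k+1}a=0$; it then uses positive definiteness of $L$ to upgrade this to an honest relation $\sum a_i v_i=0$ with integer coefficients. You instead start from an $F$-relation $v_{k+1}=\sum c_i v_i$ and invert the rational nonsingular matrix $\Gr{k}$ to force $c_i\in\Q$. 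Your route is marginally more direct, as it bypasses the passage from $a^{\mathrm T}\Gr{k+1}a=0$ back to a vector identity; the paper's route, conversely, never needs to invert $\Gr{k}$ explicitly. Either way the argument is routine.
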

\begin{proof}
Assume for the sake of contradiction that $v_1, \ldots, v_{k+1}$ are linearly dependent. Then by Lemma \ref{gram}, $\det \Gr{k+1} =0$. Since it is a rational matrix, there is a vector $0 \neq a = (\alpha_1, \ldots, \alpha_{k+1})^{\mathrm{T}} \in \Z^{k+1}$ such that $a^{\mathrm{T}}\Gr{k+1}a = 0$. Thus we have
\[
0 = a^{\mathrm{T}}\Gr{k+1}a = Q(\alpha_1 v_1 + \cdots + \alpha_{k+1} v_{k+1}).
\]
Since $L$ is positive definite, this gives $\alpha_1 v_1 + \cdots + \alpha_{k+1} v_{k+1} = 0$. Further, linear independence of $v_1, \ldots, v_k$ implies $\alpha_{k+1} \neq 0$. Therefore, 
\[
Q\bigl((\alpha_1 v_1 + \cdots + \alpha_k v_k)/\alpha_{k+1}\bigr) = Q(-v_{k+1}) = \alpha,
\]
which is a contradiction, as $\Q v_1 + \cdots + \Q v_k$ does not represent $\alpha$.
\end{proof}

For the next crucial lemma, remember that $\gamma_p$ denotes the smallest non-residue modulo an odd prime $p$. 

\begin{lem} \label{Clem}
Let $(V,Q)$ be a positive definite quadratic space over $\Q$ and $x_1, x_2 \in V$ be linearly independent vectors such that the Gram matrix $G = \bigl(\B(x_i,x_j)\bigr)_{1\leq i,j \leq 2} \in M_2(\Z)$. Then there exists an integer $C \in \Z^+$ such that $C\cdot Q(x_1)$ is not represented by $\Q x_1 + \Q x_2$. Moreover, we can choose
\begin{enumerate}
\item $C=3$ if $\det G$ is a square,
\item $C=\gamma_{p}$ if $p$ is an odd prime such that ${\ord}_{p}\bigl(\det G \bigr)$ is odd,
\item $C=5$ if ${\ord}_{2}\bigl(\det G \bigr)$ is odd.
\end{enumerate}
\end{lem}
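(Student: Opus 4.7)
The plan is to reduce the lemma to a question about rational representations by the binary form $X^2 + \Delta Y^2$, where $\Delta := \det G$, and then handle each of the three cases by local analysis at a prime tied to $\Delta$. For the reduction, set $\alpha = Q(x_1)$, $\beta = \B(x_1,x_2)$, $\gamma = Q(x_2)$, so that $\Delta = \alpha\gamma - \beta^2 > 0$ (positivity from positive definiteness of $G$). For $s, t \in \Q$ one has $Q(sx_1+tx_2) = \alpha s^2 + 2\beta st + \gamma t^2$, and multiplying by $\alpha$ and completing the square gives
\[
\alpha \cdot Q(sx_1+tx_2) = (\alpha s + \beta t)^2 + \Delta t^2.
\]
Hence $C\alpha$ is represented by $\Q x_1 + \Q x_2$ iff $C\alpha^2 = u^2 + \Delta v^2$ has a rational solution (given such $u,v$, one recovers $t = v$ and $s = (u-\beta v)/\alpha \in \Q$), which after rescaling by $\alpha^{-2}$ is equivalent to $C$ being represented by $X^2 + \Delta Y^2$ over $\Q$. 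The task is thus to rule out such a representation for the candidate $C$ in each case, and it suffices to show non-representability over some $\Q_p$.

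In case (1), $\Delta = \delta^2$ is a positive square, so $X^2 + \Delta Y^2$ represents over $\Q$ exactly the sums of two rational squares; then $C=3$ works by the classical fact that $3$ is not a sum of two rational squares (a short $3$-adic argument: squares of $3$-adic units lie in $1+3\Z_3$, and a $\ord_3$-valuation/parity check rules out $u^2+v^2=3$ in $\Q_3$). In case (2), write $\Delta = p^{2k+1}\Delta'$ with $\Delta' \in \Z_p^\times$. A hypothetical identity $\gamma_p = u^2 + \Delta v^2$ in $\Q_p$ forces, by the parity mismatch between $\ord_p(u^2)$ (even) and $\ord_p(\Delta v^2)$ (odd) combined with $\ord_p(\gamma_p)=0$, that $u \in \Z_p^\times$ and $\ord_p(\Delta v^2) \geq 1$; reducing modulo $p$ then yields $u^2 \equiv \gamma_p \pmod p$, contradicting that $\gamma_p$ is a non-residue.

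Case (3) is the most delicate step, requiring a mod-$8$ refinement, and it is the one mild obstacle of the proof. Writing $\Delta = 2^{2k+1}\Delta'$ with $\Delta' \in \Z_2^\times$, the same parity argument as in case (2) forces $u \in \Z_2^\times$ in any supposed identity $5 = u^2 + \Delta v^2$ over $\Q_2$. Since $u^2 \equiv 1 \pmod 8$ for every odd $2$-adic unit, one obtains $5 - u^2 \equiv 4 \pmod 8$, i.e.\ $\ord_2(5 - u^2) = 2$. But $\ord_2(\Delta v^2) = 2k+1 + 2\ord_2(v)$ is odd, giving the desired contradiction. The naive mod-$2$ analogue of case (2) fails because every odd integer is a square modulo $2$, which is precisely why one needs the specific constant $C = 5$ (and the finer mod-$8$ analysis) rather than an analogue involving $\gamma_2$.
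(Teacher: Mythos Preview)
Your proof is correct, and the reduction via completing the square is a clean alternative to the paper's approach. The paper instead Gram--Schmidt diagonalizes the space as $\Q x_1 \perp \Q y$ and analyses $\alpha_1^2 Q(x_1) + \alpha_2^2 Q(y)$ locally, tracking the square class of $Q(x)/Q(x_1)$ at the chosen prime. Your identity $\alpha\,Q(sx_1+tx_2)=(\alpha s+\beta t)^2+\Delta t^2$ absorbs the diagonalization step and reduces everything to the single question of whether $C$ is represented by $X^2+\Delta Y^2$ over $\Q_p$, which makes the three cases uniform and slightly shorter. The paper's route, on the other hand, stays closer to the language of local square classes and makes the underlying principle (that the binary space misses an entire $\Q_p^\times$-square class) more visible, which is what gets reused in the ternary analogue later on. Both arguments hinge on the same parity-of-valuation obstruction, with the mod-$8$ refinement at $p=2$ handled identically.
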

\begin{proof}
Observe that always at least one of the three listed cases occurs. We shall consider them separately. By Gram--Schmidt process, we may diagonalize the quadratic space generated by $x_1,x_2$: $\Q x_1+\Q x_2 =\Q x_1 \perp \Q y$ with $Q(x_1),Q(y) \in \Z^+$; then $Q(x_1)Q(y) \in \det G \cdot (\Q^{\times})^2$.

\smallskip

(1) Since $\det G$ is a square and $Q(x_1)Q(y) \in \det G \cdot (\Q^{\times})^2$, we have $\ord_3(Q(x_1)) \equiv \ord_3(Q(y)) \pmod 2$. Moreover, for every $\alpha\in\Z\setminus 3\Z$ we have $\alpha^{2}\equiv 1\pmod{3}$, so, in order for $\det G$ to be a square modulo $3$, it must hold that \[\frac{Q(x_1)}{3^{\ord_3(Q(x_1))}} \cdot \frac{Q(y)}{3^{\ord_3(Q(y))}}\equiv 1\pmod{3};\] this trivially yields
\[
\frac{Q(x_1)}{3^{\ord_3(Q(x_1))}} \equiv \frac{Q(y)}{3^{\ord_3(Q(y))}} \pmod{3}.\]
Therefore, for every $x=\alpha_1 x_1+\alpha_2 y \in \Q x_1 \perp \Q y=\Q x_1+\Q x_2$ we have 
\[
\ord_3(Q(x)) =\ord_3(Q(\alpha_1 x_1+\alpha_2 y))=\ord_3(\alpha_1^2Q(x_1)+\alpha_2^2Q(y)) =\min\{\ord_3(\alpha_1^2Q(x_1)), \ord_3(\alpha_2^2Q(y))\};
\]
the last equality is based on the fact that even if the summands have the same $3$-valuation $\nu$, then modulo $3^{\nu+1}$ we get $3^{\nu}(1+1)$, which is not divisible by $3^{\nu+1}$.
Further, we have
\begin{align*}
\ord_3(\alpha_1^2Q(x_1))=2\ord_3(\alpha_{1}) + \ord_{3}(Q(x_1))\equiv \ord_{3}(Q(x_1)) \pmod{2}
\end{align*}
and similarly $\ord_3(\alpha_2^2Q(y)) \equiv \ord_{3}(Q(y)) \equiv \ord_{3}(Q(x_1)) \pmod{2}$. Putting everything together, we get
\begin{align*}
\ord_3(Q(x))\equiv \ord_{3}(Q(x_1)) \pmod{2}.
\end{align*}

Thus we have shown that $\Q x_1 + \Q x_2$ only represents elements which have the same parity of $3$-valuation as $Q(x_1)$. Clearly $\ord_3\bigl(3 Q(x_1)\bigr) = 1 + \ord_3\bigl(Q(x_1)\bigr)$, so $\Q x_1 + \Q x_2$ fails to represent $3Q(x_1)$.

\smallskip

(2) Since $\ord_p\bigl(\det G \bigr) \equiv 1 \pmod 2$, we have $\ord_p(Q(x_1)) \not\equiv \ord_p(Q(y)) \pmod 2$.
Let us consider any $x=\alpha_1x_1+\alpha_2y \in \Q x_1 \perp \Q y=\Q x_1+\Q x_2 $ such that
\[
\ord_p(Q(x)) \equiv \ord_p(Q(x_1)) \pmod 2.
\]

Observe that
\[
\ord_p(Q(x))=\ord_p(Q(\alpha_1x_1+\alpha_2y))=\ord_p(\alpha_1^2Q(x_1)+\alpha_2^2Q(y))=\min\{\ord_p(\alpha_1^2Q(x_1)),\ord_p(\alpha_2^2Q(y))\},    
\]
since we have $\ord_p(\alpha_1^2 Q(x_1)) \neq \ord_p(\alpha_2^2 Q(y))$ due to different parity. Hence it must hold that
\[
\min\{\ord_p(\alpha_1^2Q(x_1)), \ord_p(\alpha_2^2Q(y))\}=\ord_p(\alpha_1^2Q(x_1)),
\]
as otherwise we would get $\ord_{p}(Q(x)) \equiv \ord_{p}(Q(y))\not\equiv \ord_p(Q(x_1)) \pmod{2}$, which is a contradiction.

The above consideration implies $\ord_p(\alpha_1^2Q(x_1)) < \ord_p(\alpha_2^2Q(y))$, and so $\frac{\alpha_2^2Q(y)}{\alpha_1^2Q(x_1)} \in p \Z_p$. Therefore
\[
Q(x)=Q(\alpha_1x_1+\alpha_2y)=\alpha_1^2Q(x_1)+\alpha_2^2Q(y) = \alpha_1^2Q(x_1) \Bigl(1+\frac{\alpha_2^2Q(y)}{\alpha_1^2Q(x_1)}\Bigr) \in \alpha_1^2Q(x_1) \cdot \left(1+p\Z_p\right).    
\]

By Lemma \ref{local value lem}, we have $1+p\Z_p = (1+p\Z_p)^2 \subset (\Z_p^{\times})^2$. Therefore, $Q(x)\in \alpha_1^2 Q(x_{1}) \cdot (\Z_p^{\times})^2$.

Hence we have shown that for every $x \in \Q x_1 + \Q x_2$, if $\ord_p(Q(x)) \equiv \ord_p(Q(x_1)) \pmod 2$, then $Q(x)\in Q(x_{1}) \cdot (\Q_p^{\times})^2$. Now we see that $Q(x) \neq \gamma_p Q(x_1)$: On the one hand, we have just proved it under the assumption $\ord_p(Q(x)) \equiv \ord_p(Q(x_1)) \pmod 2$; on the other hand, since $\ord_p(\gamma_p)=0$, it cannot hold if $\ord_p(Q(x)) \not\equiv \ord_p(Q(x_1)) \pmod 2$ either. Thus the quadratic space indeed fails to represent $\gamma_p Q(x_1)$.

\smallskip

(3) We proceed similarly as in the previous case. Since $\ord_2\bigl(\det G \bigr) \equiv 1 \pmod 2$, we have $\ord_2(Q(x_1)) \not\equiv \ord_2(Q(y)) \pmod 2$.
Consider any $x=\alpha_1x_1+\alpha_2y \in \Q x_1 \perp \Q y=\Q x_1+\Q x_2 $ such that
\[
\ord_2(Q(x)) \equiv \ord_2(Q(x_1)) \pmod 2.
\] 
We have $\ord_2(\alpha_1^2Q(x_1)) < \ord_2(\alpha_2^2Q(y))$ by the same reasoning as in the previous case. This, together with $\ord_2(Q(x_1)) \not\equiv \ord_2(Q(y)) \pmod 2$, implies that $\frac{\alpha_2^2Q(y)}{\alpha_1^2Q(x_1)} \in 2^k \Z_2^{\times}$ for some odd $k\in\N$. 
Since $Q(x)=\alpha_1^2Q(x_1)+\alpha_2^2Q(y)=\alpha_1^2Q(x_1) \cdot \Bigl(1+\frac{\alpha_2^2Q(y)}{\alpha_1^2Q(x_1)}\Bigr)$ with $\frac{\alpha_2^2Q(y)}{\alpha_1^2Q(x_1)} \not\equiv 4 \pmod 8$, we can deduce 
\begin{align}\label{NotEquivMod8}
\frac{Q(x)}{2^{\ord_2(Q(x))}} \not\equiv \frac{Q(x_1)}{2^{\ord_2(Q(x_1))}} \cdot 5 \pmod 8.
\end{align}

If we now assume that $Q(x) = 5 Q(x_1)$, we get
\[
\frac{Q(x)}{2^{\ord_2(Q(x))}} = \frac{5Q(x_1)}{2^{\ord_2(5Q(x_1))}} = 5 \frac{Q(x_1)}{2^{\ord_2(Q(x_1))}},
\]
which directly contradicts \eqref{NotEquivMod8}. Thus $5 Q(x_1)$ is not represented, as we needed.
\end{proof}

Note that although in several of the upcoming lemmas we do not require $L$ to be classical, we always work with a sublattice generated by vectors whose Gram matrix has elements in $\Z$, which implies that this sublattice is classical.

\begin{lem} \label{C1lem}
Let $L$ be a totally positive definite $\co_F$-lattice and $v_{1},v_{2}\in L$ linearly independent vectors such that the Gram matrix $\Gr{2} \in M_2(\Z)$. Then there exists $C_1 \in \N$ such that for any $v_3 \in L$ that satisfies $Q(v_3)=C_1 \cdot Q(v_1)$ and $\Gr3 \in M_3(\Z)$, the vectors $v_1,v_2,v_3$ are linearly independent. Moreover, we can take:
\begin{enumerate}
\item $C_{1}=3$ if $\det \Gr{2}$ is a square,
\item $C_{1}=\gamma_{p}$ if $p$ is an odd prime such that ${\ord}_{p}\bigl(\det \Gr2 \bigr)$ is odd,
\item $C_{1}=5$ if ${\ord}_{2}\bigl(\det \Gr2 \bigr)$ is odd.
\end{enumerate}
\end{lem}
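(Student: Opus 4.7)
The plan is to deduce this lemma directly by combining the previous Lemma \ref{Clem} (which handles the $\Q$-case) with Lemma \ref{indOverF} (which lifts linear independence from $\Q$ to $F$). The role of Lemma \ref{C1lem} is essentially to transport the non-representability statement for the quadratic space $\Q v_1 + \Q v_2$ into a linear independence statement for any third vector $v_3 \in L$ with the prescribed norm.

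First, since $\Gr{2} \in M_2(\Z)$ and $v_1, v_2$ are linearly independent, the $\Q$-span $V_0 := \Q v_1 + \Q v_2$ is a well-defined non-degenerate quadratic subspace of $L \otimes F$. Moreover, because $L$ is totally positive definite, restricting $Q$ to $V_0$ and working in the identity embedding $\sigma_1$ gives a positive definite quadratic space over $\Q$, so the hypotheses of Lemma \ref{Clem} (applied with $x_1 = v_1$ and $x_2 = v_2$) are met. That lemma then yields the desired $C_1 \in \N$, with the three listed cases corresponding exactly to the three cases in its conclusion, so that $V_0$ fails to represent $\alpha := C_1 \cdot Q(v_1)$. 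Note that $Q(v_1) = \B(v_1,v_1)$ is the $(1,1)$-entry of $\Gr{2}$, hence lies in $\Z$, and therefore $\alpha \in \Z$ as required.

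Now let $v_3 \in L$ be any vector satisfying $Q(v_3) = \alpha$ and $\Gr{3} \in M_3(\Z)$. The setting is precisely that of Lemma \ref{indOverF} with $k = 2$: we have linearly independent $v_1, v_2$ with $\Gr{2} \in M_2(\Q)$, the space $V_0 = \Q v_1 + \Q v_2$ does not represent $\alpha$, and $v_{k+1} = v_3$ has $Q(v_3) = \alpha$ with $\Gr{3} \in M_3(\Q)$. Hence Lemma \ref{indOverF} gives that $v_1, v_2, v_3$ are linearly independent over $F$, which is exactly the desired conclusion.

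There is no real obstacle here; the lemma is just the right packaging of Lemmas \ref{Clem} and \ref{indOverF}. The only subtlety worth double-checking is that the three cases in the conclusion match Lemma \ref{Clem} verbatim (which they do, since $\det \Gr{2}$ plays the role of $\det G$), and that the rationality hypotheses on the Gram matrices in Lemma \ref{indOverF} are satisfied, which they are by assumption.
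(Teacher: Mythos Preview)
Your proof is correct and follows essentially the same approach as the paper: apply Lemma \ref{Clem} to the $\Q$-space $\Q v_1 + \Q v_2$ to obtain $C_1$ with the three listed cases, then invoke Lemma \ref{indOverF} with $k=2$ to conclude linear independence over $F$. The paper's own proof is identical in structure and content.
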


\begin{proof}
Since all $\B(v_i,v_j)$ are integers, $\Q v_1 + \Q v_2$ is a well-defined quadratic space over $\Q$. Lemma \ref{Clem} shows the existence of a $C$ such that this quadratic space fails to represent $C\cdot Q(v_1)$, and specifies that we can choose this $C$ exactly as in the present lemma. Let us put $C_1 = C$ and take any vector $v_3 \in L$ such that $Q(v_3) = C_1\cdot Q(v_1)$ and $\Gr3 \in M_3(\Z)$. Now it only remains to prove that the vectors $v_1, v_2, v_3$ are indeed linearly independent. But this is precisely the content of Lemma \ref{indOverF} for $k=2$.
\end{proof}

We will also need another, even more direct corollary of Lemma \ref{Clem}.

\begin{cor} \label{C2cor}
Let $M=\bigl(\begin{smallmatrix} m_{11} & m_{12} \\ m_{12} & m_{22} \end{smallmatrix}\bigr) \in M_2(\Z)$ be positive definite and different from $\bigl(\begin{smallmatrix} 1 & 0 \\ 0 & 1 \end{smallmatrix}\bigr)$. Then there exists a prime $p_{2} \leq m_{11}m_{22}$ and an integer $2 \leq C_2 \leq \gamma_{p_{2}}$ such that any positive semidefinite matrix of the block form
\[
\begin{pmatrix}  M & v \\ v^{\mathrm{T}} & C_2m_{11} \end{pmatrix} \in M_3(\Z)
\]
is in fact positive definite.
\end{cor}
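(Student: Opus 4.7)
The plan is to invoke Lemma \ref{Clem} with the canonical realization of $M$. Let $V=\Q^{2}$ with quadratic form $Q(b)=b^{\mathrm{T}}Mb$; positive definiteness of $M$ makes this a positive definite quadratic space over $\Q$, and the standard basis vectors $x_{1}=(1,0)^{\mathrm{T}}$, $x_{2}=(0,1)^{\mathrm{T}}$ have Gram matrix $M$. Lemma \ref{Clem} then produces an integer $C\in\{3,\gamma_{p},5\}$ (chosen according to whether $\det M$ is a square, has some prime $p$ with $\nu_{p}(\det M)$ odd, or has $\nu_{2}(\det M)$ odd) such that $C\cdot m_{11}=C\cdot Q(x_{1})$ is not represented by $V=\Q x_{1}+\Q x_{2}$. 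I set $C_{2}=C$, which automatically satisfies $C_{2}\geq 2$.

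To choose $p_{2}$: in the cases $C=3$ and $C=5$, I would take $p_{2}=2$, using $\gamma_{2}=7\geq C_{2}$ by the convention of the paper, and the bound $p_{2}\leq m_{11}m_{22}$ would follow from the observation that an integer positive definite $2\times 2$ matrix with $m_{11}=m_{22}=1$ must be the identity, so $M\neq I$ forces $m_{11}m_{22}\geq 2$. In the case $C=\gamma_{p}$ for some odd prime $p$ with $\nu_{p}(\det M)$ odd, I would take $p_{2}=p$; then $C_{2}=\gamma_{p_{2}}$ is trivial, and $p_{2}\leq\det M\leq m_{11}m_{22}$ follows from $p\mid \det M$ combined with Hadamard's inequality (Lemma \ref{Hadamard}).

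The core step, and the main obstacle, is to deduce positive definiteness of $N=\left(\begin{smallmatrix}M & v\\ v^{\mathrm{T}} & C_{2}m_{11}\end{smallmatrix}\right)$ from its positive semidefiniteness. I would argue by contradiction: if $\det N=0$, then the radical $R=\{b\in\Q^{3}:Nb=0\}$ is nontrivial, and positive definiteness of the upper-left block $M$ forces any $a=(a_{1},a_{2},a_{3})\in R\setminus\{0\}$ to have $a_{3}\neq 0$. The quotient $W=\Q^{3}/R$ inherits a positive definite quadratic form from $N$; in $W$ the images $[e_{1}],[e_{2}]$ are linearly independent with Gram matrix $M$, while the relation $a_{1}[e_{1}]+a_{2}[e_{2}]+a_{3}[e_{3}]=0$ gives $[e_{3}]=-\tfrac{a_{1}}{a_{3}}[e_{1}]-\tfrac{a_{2}}{a_{3}}[e_{2}]\in\Q[e_{1}]+\Q[e_{2}]$, and reading off the bottom-right entry of $N$ yields $Q_{W}([e_{3}])=C_{2}m_{11}$. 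Since $\Q[e_{1}]+\Q[e_{2}]$ is isometric to $V$ (both being $2$-dimensional positive definite quadratic spaces over $\Q$ with Gram matrix $M$), this exhibits $C_{2}m_{11}$ as a value of $V$, contradicting the defining property of $C_{2}$ and completing the argument. The delicate point is recognizing that passing to the quotient by the radical is exactly what allows one to translate a positive-semidefinite statement back into the representation hypothesis of Lemma \ref{Clem}.
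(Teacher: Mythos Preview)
Your proof is correct and follows essentially the same approach as the paper: invoke Lemma~\ref{Clem} on the quadratic space $(\Q^2,M)$ to obtain $C_2$, handle the bounds on $p_2$ via the case split (square / odd prime with odd valuation / $\nu_2$ odd) together with Hadamard's inequality and the observation that $m_{11}m_{22}=1$ forces $M=I$, and then argue that a singular $N$ would exhibit $C_2m_{11}$ as a value of $(\Q^2,M)$. The paper compresses the last step into a one-line appeal to Lemma~\ref{gram}, whereas you spell it out via the quotient by the radical; this is just a more explicit version of the same argument (indeed, one could also do it by directly reading off from $Na=0$ that $(a_1/a_3,a_2/a_3)^{\mathrm T}M(a_1/a_3,a_2/a_3)=C_2m_{11}$), so there is no substantive difference.
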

\begin{proof}
If we rewrite Lemma \ref{Clem} in terms of matrices instead of quadratic spaces (e.g., using Lemma \ref{gram}), we get almost precisely the present statement, only instead of the inequalities for $p_2$ and $C_2$ we get the description contained in Lemma \ref{Clem}: Either $m_{11}m_{22}-m_{12}^2 = 2^t$, and then $p_2=2$ and $C_2 = 3$ or $5$, or $p_2 \mid m_{11}m_{22}-m_{12}^2$ is odd and $C_2=\gamma_{p_2}$. In the latter case, the inequalities are clear; in the former, they are violated only if $m_{11}m_{22}=1$, which occurs only for the identity matrix.
\end{proof}

\begin{remark} \label{rem5not7}
From the proof it is clear that Corollary \ref{C2cor} holds even if we put $\gamma_2 = 5$ instead of $\gamma_2 = 7$. In fact, throughout the paper, all bounds of the form $C_i \leq \gamma_{p_i}$ hold in this setting, since they are all based on Lemma \ref{Clem}, either via Lemma \ref{C1lem} or via Corollary \ref{C2cor}. (This is in contrast with the bounds $D_i \leq q_i\gamma_{q_i}$, which need $\gamma_2=7$.) In particular, this is true for Corollary \ref{ind cor}(1) and thus for all results based on it.

However, since all this only plays a role in the proof of the explicit bounds for $m=1, 2$ in Theorem \ref{ThmMain1}, we have decided to keep the formulations simpler by putting $\gamma_2 = 7$ for the whole paper.
\end{remark}

Note that although we denoted the primes in Lemma \ref{Clem}, Lemma \ref{C1lem} and Corollary \ref{C2cor} differently, they are essentially the same prime and we have done so only for future reference. We now start another triple of statements, again two lemmas followed by a corollary, which have precisely the same mutual relations as the past three statements. This time we denote the primes by $q, q_1, q_2$.

\begin{lem} \label{Dlem}
Let $(V,Q)$ be a positive definite quadratic space over $\Q$ and $x_1, x_2, x_3 \in V$ be linearly independent vectors such that the Gram matrix $G = (\B(x_i,x_j))_{1\leq i,j \leq 3} \in M_3(\Z)$. Then there exists a prime $q \mid 2 \det G$ and numbers $d \in \{0,1\}$ and 
\[
\eta \in
\begin{cases}
\text{ $\{1, \gamma_q\}$} & \text{if $q$ is odd},  \\
\text{ $\{1,3,5,7\}$} & \text{if $q=2$}, \\
\end{cases}
\]
such that $\Q x_1 + \Q x_2 + \Q x_3$ fails to represent all the elements of  $q^{d} \eta \cdot (\Q_q^{\times})^2$.
\end{lem}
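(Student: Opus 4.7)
The plan is to restrict $Q$ to the rational span $V' := \Q x_1 + \Q x_2 + \Q x_3$, which is a positive definite ternary $\Q$-space whose Gram matrix in the basis $x_1, x_2, x_3$ is $G \in M_3(\Z)$; equivalently, $L_0 := \Z x_1 + \Z x_2 + \Z x_3$ is a positive definite ternary $\Z$-lattice, and $L_0 \otimes \Q_p \cong V' \otimes \Q_p$ for every prime $p$. The strategy is to pin down a finite prime $q \mid 2\det G$ at which $V' \otimes \Q_q$ is anisotropic, and then invoke Lemma \ref{aniso ternary} together with Corollary \ref{local value} to extract the desired non-represented square class.

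First I would check that $V' \otimes \Q_p$ is isotropic for every odd prime $p$ with $p \nmid \det G$. Indeed, such $G$ is unimodular over $\Z_p$ and reduces modulo $p$ to a non-degenerate ternary quadratic form over $\mathbb{F}_p$. By Chevalley--Warning (or a direct counting argument), every non-degenerate ternary form over $\mathbb{F}_p$ with $p$ odd is isotropic; a non-trivial isotropic vector then lifts to $\Z_p$ by Hensel's lemma, giving isotropy of $V' \otimes \Q_p$.

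Next I would produce a finite anisotropic prime via Hilbert reciprocity. Since $V'$ is positive definite, $V' \otimes \R$ is anisotropic. The product formula for the Hasse--Witt invariant, combined with $\prod_p (-1, -\det G)_p = 1$, implies that the set of places at which a ternary $\Q$-space is anisotropic has even cardinality. Because $\R$ is one such place, at least one \emph{finite} prime $q$ must be anisotropic too; by the previous paragraph $q \in \{2\} \cup \{p : p \mid \det G\}$, so $q \mid 2\det G$.

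Finally, by Lemma \ref{aniso ternary} applied to $L_0$, anisotropy of $L_0 \otimes \Q_q$ prevents it from representing every element of $\Q_q^{\times}$. Since $Q(\alpha v) = \alpha^2 Q(v)$, the set of non-represented elements in $\Q_q^{\times}$ is closed under multiplication by $(\Q_q^{\times})^2$, hence is a union of square classes; by Corollary \ref{local value}, every such class has a representative of the form $q^{d}\eta$ with $d \in \{0,1\}$ and $\eta \in \{1, \gamma_q\}$ (if $q$ is odd) or $\eta \in \{1,3,5,7\}$ (if $q=2$). Picking $q^{d}\eta$ in any non-represented class yields the triple demanded by the lemma. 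The main obstacle is the middle step: Hilbert reciprocity is needed to guarantee an \emph{even} total number of anisotropic places and thereby produce a finite anisotropic prime, whereas the good-prime isotropy, the local square class classification, and the isotropy-versus-universality dichotomy for ternary spaces are each direct consequences of what the paper has already established.
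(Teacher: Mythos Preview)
Your proof is correct and follows essentially the same route as the paper's: both arguments use Hilbert reciprocity to locate a finite anisotropic prime, restrict it to divisors of $2\det G$ by showing isotropy at all other odd primes, and then combine Lemma~\ref{aniso ternary} with Corollary~\ref{local value} to extract a non-represented square class. The only cosmetic difference is that for isotropy at odd primes $p \nmid \det G$ you invoke Chevalley--Warning plus Hensel, whereas the paper cites the structure theory of unimodular $\Z_p$-lattices from O'Meara; these are equivalent justifications of the same fact.
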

\begin{proof}
By Hilbert's reciprocity law for the Hasse symbol, a positive definite ternary quadratic space over $\Q$ is anisotropic over $\Q_q$ for at least one (finite) prime $q$  \cite[58:6]{OM}. Thus, let $q$ be a prime number for which $\Q x_1 + \Q x_2 + \Q x_3$ is anisotropic over $\Q_q$. Note that $q=2$ or $q \mid \det G$. Indeed, if $p$ is an odd prime not dividing the determinant, then $\Z_px_1 + \Z_px_2 + \Z_px_3$ is a so-called \emph{unimodular} $\Z_p$-lattice \cite[82:13]{OM}, so it can be written as $\Z_p y_1 \perp \Z_p y_2 \perp \Z_p y_3$ with $Q(y_i) \in \Z_p^{\times}$ \cite[92:1]{OM}; thus it is isotropic over $\Q_p$ \cite[63:14]{OM}.

Thus $q \mid 2\det G$, and the anisotropic quadratic $\Q_q$-space $\Q_qx_1+\Q_qx_2+\Q_qx_3$ does not represent all of $\Q_q$ by Lemma \ref{aniso ternary}; hence it fails to represent at least one square class. If $q$ is odd, then by Corollary \ref{local value}(4), the four square classes are given as $q^{d} \eta \cdot (\Q_q^{\times})^2$ with $d\in \{0,1\}$ and $\eta \in \{1,\gamma_q\}$.
When $q=2$, then by Corollary \ref{local value}(3) there are eight square classes and we can find $d \in \{0,1\}$ and $\eta \in \{1,3,5,7 \}$ such that the elements of the square class $2^{d} \eta \cdot (\Q_2^{\times})^2$ are not represented.

In either case, we see that, in particular, the smaller quadratic space $\Q x_1 + \Q x_2 + \Q x_3$ fails to represent elements of $q^{d} \eta \cdot (\Q_q^{\times})^2$.
\end{proof}

\begin{lem} \label{almostD1lem}
Let $L$ be a totally positive definite $\co_F$-lattice and let $v_1,v_2,v_3 \in L$ be linearly independent vectors such that their Gram matrix $\Gr3 \in M_3(\Z)$. Then there exists a rational prime $q_1 \mid 2\det \Gr3$ and numbers $d \in \{0,1\}$ and 
\[
\eta \in
\begin{cases}
\text{ $\{1, \gamma_{q_1}\}$} & \text{if $q_1$ is odd},  \\
\text{ $\{1,3,5,7\}$} & \text{if $q_1=2$}, \\
\end{cases}
\]
with the following property: For every $v_4 \in L$ such that the Gram matrix $\Gr4 \in M_4(\Z)$ and
\[
Q(v_4) \in q_1^{d} \eta \cdot (\Q_{q_1}^{\times})^2,
\]
vectors $v_1,v_2,v_3,v_4$ are linearly independent.
\end{lem}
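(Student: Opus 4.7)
The plan is to run a direct analogue of the proof of Lemma \ref{C1lem}, one dimension higher, with Lemma \ref{Dlem} playing the role that Lemma \ref{Clem} played there. Since $\Gr{3} \in M_3(\Z) \subset M_3(\Q)$, the $\Q$-span $W := \Q v_1 + \Q v_2 + \Q v_3$ inside $L \otimes F$ is a well-defined positive definite ternary quadratic space over $\Q$, with Gram matrix $\Gr{3}$ on the ordered basis $v_1,v_2,v_3$. In particular, by Lemma \ref{gram}, the linear independence over $F$ given in the hypothesis transfers to linear independence over $\Q$, so $W$ really is three-dimensional.

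First I would apply Lemma \ref{Dlem} to $W$ and the basis $v_1,v_2,v_3$. This produces a prime $q_1 \mid 2\det \Gr{3}$ together with $d \in \{0,1\}$ and an $\eta$ in the set prescribed by that lemma, with the property that $W$ fails to represent any element of the square class $q_1^{d}\eta \cdot (\Q_{q_1}^{\times})^2$. These will be the data returned by the present lemma.

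Next, suppose $v_4 \in L$ satisfies $\Gr{4} \in M_4(\Z)$ and $Q(v_4) \in q_1^{d}\eta \cdot (\Q_{q_1}^{\times})^2$. Set $\alpha := Q(v_4)$; since $\alpha$ is the $(4,4)$-entry of $\Gr{4}$, it lies in $\Z$, and by the previous step $W$ does not represent $\alpha$ over $\Q$ (it does not represent $\alpha$ even over the larger field $\Q_{q_1}$). Lemma \ref{indOverF}, applied with $k = 3$ and this integer $\alpha$, then yields the linear independence of $v_1,v_2,v_3,v_4$ over $F$, which is exactly the desired conclusion.

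The only thing to verify carefully is that the hypotheses of Lemmas \ref{Dlem} and \ref{indOverF} are all genuinely satisfied by our data — positive definiteness and $\Q$-rationality of the Gram matrix for Lemma \ref{Dlem}, and the additional fact that $\Gr{4}$ is rational (indeed integral) for Lemma \ref{indOverF}. This is purely bookkeeping, so I do not anticipate any real obstacle; the statement should come out as a one-line combination of the two preceding lemmas.
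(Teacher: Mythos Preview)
Your proposal is correct and follows essentially the same approach as the paper's own proof: apply Lemma \ref{Dlem} to the rational quadratic space $\Q v_1 + \Q v_2 + \Q v_3$ to produce $q_1,d,\eta$, then invoke Lemma \ref{indOverF} with $k=3$ to obtain the linear independence of $v_1,\dots,v_4$.
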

\begin{proof}
The proof is the same as for Lemma \ref{C1lem}: The conditions ensure that $\Q v_1 + \Q v_2 + \Q v_3$ is a well-defined quadratic space satisfying the assumptions of Lemma \ref{Dlem}; thus, it fails to represent $q_1^{d} \eta \cdot (\Q_{q_1}^{\times})^2$. From Lemma \ref{indOverF} follows the linear independence of $v_1, \ldots, v_4$. 
\end{proof}

\begin{cor} \label{D2cor}
Let $M=(m_{ij})_{1\le i,j \le 3}\in M_3(\Z)$ be a positive definite matrix different from the identity matrix. Then there exists a prime $q_{2} \leq m_{11}m_{22}m_{33}$ and an integer $1 \leq D_2 \leq q_2 \gamma_{q_2}$ such that any positive semidefinite matrix of the block form
\[
\begin{pmatrix}  M & v \\ v^{\mathrm{T}} & D_2m_{11} \end{pmatrix} \in M_4(\Z)
\]
is in fact positive definite.
\end{cor}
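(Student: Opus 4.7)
The plan is to translate Lemma \ref{Dlem} and Lemma \ref{almostD1lem} into matrix language, precisely mirroring how Corollary \ref{C2cor} is deduced from Lemma \ref{Clem} via Lemma \ref{C1lem}. First, I would realize the given positive definite matrix $M \in M_3(\Z)$ as the Gram matrix of three linearly independent vectors $v_1, v_2, v_3$ in some positive definite quadratic $\Q$-space $V$; this is immediate by equipping $V = \Q^3$ with the bilinear form encoded by $M$ and invoking Lemma \ref{gram}. Applying Lemma \ref{Dlem} to $v_1,v_2,v_3$ then produces a prime $q_2 \mid 2\det M$, an exponent $d \in \{0,1\}$, and a unit representative $\eta$ (in $\{1,\gamma_{q_2}\}$ if $q_2$ is odd, or in $\{1,3,5,7\}$ if $q_2=2$) such that the whole square class $q_2^{d}\eta\cdot(\Q_{q_2}^{\times})^{2}$ is not represented by $\Q v_1 + \Q v_2 + \Q v_3$.

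The bound $q_2 \leq m_{11}m_{22}m_{33}$ then splits into two cases. If $q_2$ is odd, then $q_2 \mid \det M$, so Hadamard's inequality (Lemma \ref{Hadamard}) gives $q_2 \leq \det M \leq m_{11}m_{22}m_{33}$. If $q_2 = 2$, the hypothesis $M \neq I$ forces $m_{11}m_{22}m_{33} \geq 2$: should all three diagonal entries equal $1$, positivity of each $2 \times 2$ principal minor would force every off-diagonal entry of $M$ to be zero, contradicting $M \neq I$.

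Next, I would select $D_2 \in \{1, 2, \ldots, q_2\gamma_{q_2}\}$ so that $D_2 m_{11}$ lies in the forbidden class $q_2^{d}\eta \cdot (\Q_{q_2}^{\times})^{2}$. This is possible because by Corollary \ref{local value}(3)--(4) the square classes of $\Q_{q_2}^{\times}$ are enumerated explicitly by $\{1, \gamma_{q_2}, q_2, q_2\gamma_{q_2}\}$ when $q_2$ is odd and by $\{1,3,5,7,2,6,10,14\}$ when $q_2=2$; in either case all representatives are bounded above by $q_2\gamma_{q_2}$. Since multiplication by $m_{11}$ permutes these classes, exactly one of the listed representatives, call it $D_2$, lands in the desired class when multiplied by $m_{11}$.

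Finally, suppose $\begin{pmatrix} M & v \\ v^{\mathrm{T}} & D_2 m_{11} \end{pmatrix} \in M_4(\Z)$ is positive semidefinite. After enlarging $V$ if necessary, realize this as a Gram matrix of $v_1,v_2,v_3,v_4$ with $Q(v_4) = D_2 m_{11}$. Since $Q(v_4)$ lies in the forbidden $q_2$-adic square class, Lemma \ref{almostD1lem} ensures that $v_1,v_2,v_3,v_4$ are linearly independent, which by Lemma \ref{gram} is equivalent to the $4 \times 4$ Gram matrix being positive definite. The main step requiring genuine thought is arranging $D_2 m_{11}$ to hit precisely the correct $q_2$-adic square class while keeping $D_2$ small; everything else is a direct matrix-language restatement of Lemma \ref{almostD1lem}.
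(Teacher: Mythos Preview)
Your proposal is correct and follows essentially the same approach as the paper, only with considerably more detail spelled out. The paper's proof is very terse---it simply says to translate Lemma~\ref{Dlem} into matrix form and then verifies the bound on $q_2$ via Hadamard's inequality and the $M\neq I$ hypothesis, exactly as you do; your explicit discussion of how to pick $D_2$ so that $D_2 m_{11}$ lands in the forbidden $q_2$-adic square class (using Corollary~\ref{local value}) matches what the paper does in the parallel argument in Corollary~\ref{ind cor}(2).
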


\begin{proof}
The proof is similar to that of Corollary \ref{C2cor}. Mainly, we need to translate Lemma \ref{Dlem} into matrix form. This yields almost exactly what we need, including the inequality $D_2 \leq q_2\gamma_{q_2}$; it only remains to show the inequality for $q_2$. We get that either $q_2=2$, or $q_2 \mid \det M$. In the latter case, we have $q_2 \leq \det M \leq m_{11}m_{22}m_{33}$ by Hadamard's inequality, see Lemma \ref{Hadamard}. In the former, the desired inequality only fails if $m_{11}m_{22}m_{33}=1$. However, this only happens for the identity matrix.
\end{proof}

Now we can apply the obtained lemmas to a specific choice of vectors $v_{1}$ and $v_{2}$.

\begin{cor}\label{ind cor}
Let $L$ be a totally positive definite quadratic $\co_F$-lattice and $m\in\N$. Let $v_{1}, v_{2} \in L$ be linearly independent vectors such that $Q(v_{1})=m$, $Q(v_{2})=2m$ and $\B(v_{1},v_{2})\in\Z$. Then:
\begin{enumerate}
\item There exists a prime number $p\leq 2m^{2}$ and an integer $2 \leq C_{1}\leq \gamma_{p}$ with the following property: If $v_{3}\in L$ is a vector such that $Q(v_{3})=C_{1}m$ and that $\Gr3 \in M_3(\Z)$, then vectors $v_{1},v_{2},v_{3}$ are linearly independent.
\item Further, there exists a prime number $q\leq 2C_{1}m^{3}$ and an integer $1 \leq D_{1}\leq q\gamma_{q}$ (both may depend on $v_3$) with the following property: If $v_{4}\in L$ is a vector such that $Q(v_{4})=D_{1}m$ and that $\Gr4 \in M_4(\Z)$, then vectors $v_{1},v_{2},v_{3},v_{4}$ are linearly independent.
\end{enumerate}
\end{cor}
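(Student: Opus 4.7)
The plan is to derive (1) directly from Lemma \ref{C1lem} applied to $v_1,v_2$, and (2) from Lemma \ref{almostD1lem} applied to $v_1,v_2,v_3$. In both cases the remaining work is to convert the implicit ``$p$ divides the determinant'' statement into the explicit bound $p \leq 2m^2$ (respectively $q \leq 2C_1 m^3$) using Hadamard's inequality and the known diagonal entries, and additionally in (2) to convert the square-class conclusion of Lemma \ref{almostD1lem} into a concrete positive integer $D_1$ of bounded size.

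For (1), the hypothesis $\B(v_1,v_2)\in\Z$ forces $\Gr2 \in M_2(\Z)$, so Lemma \ref{C1lem} applies. Set $\delta := \det \Gr2 = 2m^2 - \B(v_1,v_2)^2$; by Lemma \ref{gram} and the Cauchy--Schwarz inequality in any embedding, $\delta$ is a positive integer with $\delta \leq 2m^2$. The trichotomy of Lemma \ref{C1lem} then yields three cases: if $\delta$ is a square, take $C_1 = 3$ and $p = 2$ (so $\gamma_p = 7 \geq C_1$); if $\ord_2(\delta)$ is odd, take $C_1 = 5$ and $p = 2$; and if an odd prime $p$ has $\ord_p(\delta)$ odd, take $C_1 = \gamma_p$, which forces $p \mid \delta \leq 2m^2$. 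In every case $p \leq 2m^2$ and $2 \leq C_1 \leq \gamma_p$, and Lemma \ref{C1lem} immediately supplies the desired conclusion.

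For (2), Hadamard's inequality (Lemma \ref{Hadamard}) applied to $\Gr3$, whose diagonal entries are $m$, $2m$ and $C_1 m$, yields $\det \Gr3 \leq 2 C_1 m^3$. Apply Lemma \ref{almostD1lem} to obtain a prime $q \mid 2\det \Gr3$ together with $d\in\{0,1\}$ and $\eta$ (with $\eta \in \{1,\gamma_q\}$ for $q$ odd and $\eta \in \{1,3,5,7\}$ for $q=2$), such that any $v_4 \in L$ with $\Gr4 \in M_4(\Z)$ and $Q(v_4) \in q^d \eta \cdot (\Q_q^\times)^2$ is linearly independent from $v_1,v_2,v_3$. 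Splitting into the cases $q=2$ and $q$ odd (in the latter $q$ must divide $\det \Gr3$ itself) yields $q \leq 2 C_1 m^3$. It only remains to produce a positive integer $D_1 \leq q\gamma_q$ such that $D_1 m$ lies in the prescribed square class; by Corollary \ref{local value}, every square class in $\Q_q^\times$ has a representative in $\{1,\gamma_q,q,q\gamma_q\}$ when $q$ is odd and in $\{1,2,3,5,6,7,10,14\}$ when $q=2$, all bounded by $q\gamma_q$. Choosing $D_1$ from the corresponding list so that $D_1 m$ falls in the prescribed class completes the construction.

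The argument is essentially bookkeeping; no genuinely hard step arises since the key ``anisotropic at some prime'' content is already packaged into Lemma \ref{almostD1lem}. The only care needed is in pinning down the bound on $q$ (the ``$q \mid 2\det \Gr3$'' clause forces a split into the subcases $q=2$ and $q$ odd) and in checking that the eight square classes at $q=2$ from Corollary \ref{local value}(3) all still fit inside $[1, q\gamma_q] = [1, 14]$, so that the small integer $D_1$ can always be found.
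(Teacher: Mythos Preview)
Your proposal is correct and follows essentially the same approach as the paper: both parts invoke Lemma~\ref{C1lem} and Lemma~\ref{almostD1lem} respectively, bound the relevant prime via Hadamard's inequality on the Gram matrix, and in part~(2) pick $D_1$ as a small representative of the required $\Q_q^\times$-square class using Corollary~\ref{local value}. The only cosmetic difference is that the paper writes the representative abstractly as $q^{\tilde d}\tilde\eta$ rather than listing the eight values at $q=2$ explicitly.
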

\begin{proof}
The first part of the statement follows from Lemma \ref{C1lem}: If $\det \Gr2$ is a square or twice a square, we can take $p=2$. Otherwise, let $p$ be such that $\ord_{p}\bigl(\det \Gr2 \bigr)$ is odd. The existence of $C_{1}$ and the bound $2 \leq C_{1}\leq \gamma_{p}$ follow directly from the statement of Lemma \ref{C1lem}. We only need to prove the bound $p\leq 2m^{2}$. This is obvious if $p=2$. If $p$ is odd, then it is a divisor of $\det \Gr2$, so, indeed,
\begin{align*}
p\leq \det \Gr2 =Q(v_{1})Q(v_{2})-\B(v_{1},v_{2})^{2}\leq Q(v_{1})Q(v_{2})=2m^{2}.
\end{align*}

In order to prove the second part of the statement, we will use Lemma \ref{almostD1lem}. We need to choose $D_{1} \in \N$ so that $Q(v_{4})=D_{1}Q(v_{1})=D_{1}m\in q^{d} \eta \cdot (\Q_q^{\times})^2$, where $q$ is the prime from Lemma \ref{almostD1lem} and $d, \eta$ are the same as in that lemma. We could choose $D_{1}=q^{d}\eta \cdot m$. But we can do better: in the same square class as $q^{d}\eta \cdot m$, we will choose another representative $D_1 = q^{\tilde{d}} \tilde{\eta}$ with $\tilde{d} \in \{0,1\}$ and $\tilde{\eta} \in \{1,\gamma_q\}$ (or $\tilde{\eta} \in \{1,3,5,7\}$ for $q=2$). Then, obviously, $1 \leq D_1 \leq q\gamma_q$. In order to finish the proof, it is enough to show that $q\leq 2C_{1}m^{3}$. This is clear for $q=2$ and if $q\mid  \det \Gr3$, then $q\leq \det \Gr3 \leq m\cdot 2m\cdot C_{1}m$ by Hadamard's inequality (Lemma \ref{Hadamard}).
\end{proof}

Note that Corollary \ref{ind cor} (combined with Lemma \ref{LemNonIntBound}(2) to ensure that a Gram matrix with $m$, $2m$, $C_1m$, $D_1m$ on the diagonal has all entries in $\Z$) actually shows the following curious variation on the lifting problem \cite{KY1}: \textit{There are only finitely many real quadratic fields admitting a ternary lattice that represents all of $m\N$.} For $m=1$ this was present, although not explicitly stated, already in \cite{KKP}. We now get this result for all values of $m$ and with explicit bounds on the discriminant $\Delta_D$ of $\Q(\!\sqrt{D})$.

To be specific: Using the previous Corollary and Lemma \ref{LemNonIntBound}(2), one easily sees that any quadratic field admitting a classical totally positive lattice which represents $m\N$ must have $\Delta_D \leq 8m^5\gamma_p^2\gamma_q$ where $p \leq 2m^2$ and $q \leq 2\gamma_p m^3$. For $m=1$ and $m=2$ one gets better bounds on $C_1$ and $D_1$ by checking all possible pairs of $p,q$ just as in Section \ref{sec:m-uni}, leading to the following statement:

\begin{remark}
If $F = \Q(\!\sqrt{D})$ admits a classical (non-classical, resp.) totally positive definite ternary lattice which represents all of $\N$, then $\Delta_D\leq 420$ ($\Delta_D\leq 39760$, resp.).
\end{remark}

\section{Ranks 4 and 5}\label{sec:45}

We are now finally ready to prove Theorem \ref{ThmMain1} case by case. Recall that we use $\Gr{k}$ to denote the Gram matrix $\begin{pmatrix} \B(v_i,v_j) \end{pmatrix}_{1 \leq i,j \leq k}$ whenever the vectors $v_1, \ldots, v_k$ are given.

\begin{thm}\label{rank 3}
Let $m, D\in\N$, $D \neq 1$ squarefree, and $F=\Q(\!\sqrt{D})$. Let $L$ be a classical totally positive definite quadratic $\co_F$-lattice which represents every element of $m\co_F^{+}$. Then there exists a prime $p_{1}\leq 2m^{2}$ and an integer $2 \leq C_{1}\leq \gamma_{p_{1}}$ such that if $\Delta_{D}>4C_{1}m^{2}\big(\omega_{D}-\lfloor \omega_{D}'\rfloor\big)$, then $\rank L \ge 4$.
\end{thm}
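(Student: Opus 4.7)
The plan is to prove the contrapositive: assuming $\Delta_D > 4C_1 m^2 \bigl(\omega_D - \lfloor\omega_D'\rfloor\bigr)$ for the $C_1$ produced below, I will exhibit four linearly independent vectors in $L$, forcing $\rank L \geq 4$.

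First I would build three linearly independent vectors along the lines of Corollary \ref{ind cor}(1). Since $m, 2m \in m\co_F^+$ are represented, pick vectors $v_1, v_2 \in L$ with $Q(v_1) = m$, $Q(v_2) = 2m$; they are linearly independent because $2 \notin (F^\times)^2$. The hypothesis together with $\omega_D - \lfloor \omega_D'\rfloor \geq \omega_D + 1 > 2$ gives $\Delta_D > 16 C_1 m^2$, so the Cauchy--Schwarz bound $\B(v_1, v_2)^2 \preceq 2m^2$ combined with Lemma \ref{LemNonIntBound}(2) yields $\B(v_1, v_2) \in \Z$. Corollary \ref{ind cor}(1) then produces a prime $p_1 \leq 2m^2$ and an integer $2 \leq C_1 \leq \gamma_{p_1}$. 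Since $C_1 m \in m\co_F^+$, one finds $v_3 \in L$ with $Q(v_3) = C_1 m$; analogous Cauchy--Schwarz estimates yield $\B(v_1, v_3), \B(v_2, v_3) \in \Z$, so $\Gr3 \in M_3(\Z)$, and the corollary delivers linear independence of $v_1, v_2, v_3$.

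The fourth vector is the crux. Set $\alpha := \omega_D - \lfloor\omega_D'\rfloor$. Since $\omega_D' \notin \Z$ and $\lfloor \omega_D' \rfloor \leq -1$, one has $\alpha' = \omega_D' - \lfloor\omega_D'\rfloor \in (0, 1)$ and $\alpha > 0$, hence $\alpha \in \co_F^+$. Representing $m\alpha \in m\co_F^+$ yields some $v_4 \in L$ with $Q(v_4) = m\alpha$. The hypothesis directly gives $Q(v_3)Q(v_4) = C_1 m^2 \alpha < \Delta_D/4$ in the identity embedding, while the conjugate inequality $C_1 m^2 \alpha' < C_1 m^2 < \Delta_D/4$ is immediate from $\alpha' < 1$. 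The same bounds (with $Q(v_3)$ replaced by $Q(v_1)$ or $Q(v_2)$) hold a fortiori for $i = 1, 2$, so $\B(v_i, v_4) \in \Z$ for every $i \leq 3$ by Lemma \ref{LemNonIntBound}(2).

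Finally, were $v_1, v_2, v_3, v_4$ linearly dependent, one could write $v_4 = \sum_{i=1}^3 a_i v_i$ with $a_i \in F$. Pairing with each $v_j$ gives the linear system $\B(v_j, v_4) = \sum_i a_i \B(v_i, v_j)$ whose coefficient matrix $\Gr3$ is rational and nonsingular and whose right-hand side lies in $\Z^3$; hence each $a_i \in \Q$. But then $Q(v_4) = \sum_{i,j} a_i a_j \B(v_i, v_j) \in \Q$, contradicting $Q(v_4) = m\omega_D - m\lfloor\omega_D'\rfloor \notin \Q$. The main obstacle is precisely this last step: unlike the setting of Lemma \ref{indOverF}, the vector $v_4$ has \emph{irrational} norm, and this rationality mismatch is the leverage that rules out dependence. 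The choice $\alpha = \omega_D - \lfloor\omega_D'\rfloor$ is optimal in that it is the smallest totally positive translate of $\omega_D$ by a rational integer, thereby yielding the sharpest bound on $\Delta_D$ obtainable from this argument.
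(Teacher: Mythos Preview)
Your approach is essentially the paper's: both pick $v_1,v_2,v_3$ via Corollary~\ref{ind cor}(1) and a fourth vector $v_4$ with irrational norm $Q(v_4)=m(\omega_D-\lfloor\omega_D'\rfloor)$, then exploit the rationality mismatch. Your final step---solving the linear system $\Gr3\,(a_1,a_2,a_3)^{\mathrm T}=(\B(v_j,v_4))_j$ over $\Q$ and concluding $Q(v_4)\in\Q$---is just a rephrasing of the paper's Laplace expansion $\det\Gr4=(\text{rational})+m\omega_D\det\Gr3\notin\Q$; both are correct and capture the same idea.

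There is one genuine gap: your claim that $v_1,v_2$ are independent ``because $2\notin(F^\times)^2$'' is false when $D=2$, since then $\sqrt2\in F$ and one may have $v_2=\sqrt2\,v_1$. The paper handles this separately by taking $p_1=C_1=2$; since $\Delta_2=8<8m^2(\sqrt2+2)=4C_1m^2(\omega_2-\lfloor\omega_2'\rfloor)$ for every $m\geq1$, the hypothesis is vacuous and the implication holds trivially. A smaller slip: from $\omega_D-\lfloor\omega_D'\rfloor>2$ you only obtain $\Delta_D>8C_1m^2$, not $16C_1m^2$; the weaker bound still suffices for all the Cauchy--Schwarz applications you need (the largest rational product is $Q(v_2)Q(v_3)=2C_1m^2$).
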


\begin{proof}
Let $C_1 \geq 2$ be an integer for which the inequality $\Delta_D > 4C_1 m^2 \big(\omega_{D}-\lfloor \omega_{D}'\rfloor\big)$ holds. The specific value of $C_1$ will be chosen later. As $\omega_{D}-\lfloor \omega_{D}'\rfloor = \sqrt{\Delta_D} + \omega_{D}'-\lfloor \omega_{D}'\rfloor > \sqrt{\Delta_D} \geq \sqrt5$, it follows that $C_1$ and $\omega_{D}-\lfloor \omega_{D}'\rfloor$ are the largest two elements in the list $1,2,C_1,\omega_{D}-\lfloor \omega_{D}'\rfloor$. By the assumption that $L$ represents all of $m\co_F^+$, there exist vectors $v_1$, $v_2$, $v_3$, $v_4 \in L$ such that $Q(v_1) = m$, $Q(v_2) = 2m$, $Q(v_3) = C_1 m$ and $Q(v_4) = \big(\omega_{D}-\lfloor \omega_{D}'\rfloor\big)m$. We have already noted that $Q(v_3)Q(v_4) \geq Q(v_i)Q(v_j)$ for any choice $i \neq j$. Thus our condition on the discriminant yields
\[
\Delta_D/4 > C_1 m \cdot \big(\omega_{D}-\lfloor \omega_{D}'\rfloor\big)m = Q(v_3)Q(v_4) \geq Q(v_i)Q(v_j) \qquad \text{for all $i \neq j$ with $1 \leq i, j \leq 4$.}
\]
We also note that $Q(v_4) > Q(v_4)'$ and $Q(v_i) = Q(v_i)'$ for $1\leq i \leq 3$, so $\Delta_D/4 > Q(v_i)'Q(v_j)'$ in all cases as well. All in all, we have $Q(v_i)Q(v_j) \prec \Delta_D/4$ for $i \neq j$ and thus Lemma \ref{LemNonIntBound}(2) implies $\B(v_i,v_j) \in \Z$. That is, all off-diagonal entries in the Gram matrix $\Gr4$ are integers.

The vectors $v_1$, $v_2$ are linearly independent (unless $D=2$, in which case we can just put $p_{1}=C_1=2$ and then $\Delta_D > 4C_1m^2(\sqrt2 + 2)$ will never hold). Hence the assumptions of Corollary \ref{ind cor} are satisfied. This gives us a specific value of $p_{1} \leq 2m^2$ and $2 \leq C_{1} \leq \gamma_{p_{1}}$, for which we now prove the statement. First of all, Corollary \ref{ind cor}(1) immediately yields that $v_1, v_2, v_3$ are linearly independent. This means that $\det \Gr3\neq 0$.

Since $Q(v_4) = \big(\omega_{D}-\lfloor \omega_{D}'\rfloor\big)m$ is the only irrational entry of the matrix $\Gr4$, the Laplace expansion of the determinant yields
\[
\det \Gr4 =  \substack{\textrm{a rational}\\\textrm{number} } + \omega_{D} m  \det \Gr3.
\]
As $\det \Gr3$ is nonzero, it follows that $\det \Gr4$ is irrational and thus nonzero. This means by Lemma \ref{gram} that the vectors $v_1,v_2,v_3,v_4$ are linearly independent. Hence, indeed, $\rank L\geq 4$.
\end{proof}

In the subsequent proofs, we will repeatedly need the following observation about real matrices.

\begin{lem}\label{SumOfPSD}
Let $X$ and $Y$ be positive semidefinite matrices of the same size. Then for the positive semidefinite matrix $X+Y$, the following are equivalent:
 \begin{enumerate}
     \item $X+Y$ is positive definite,
     \item $\det(X+Y) \neq 0$,
     \item $\Ker X \cap \Ker Y = \{0\}$.
 \end{enumerate}
\end{lem}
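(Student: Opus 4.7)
The plan is to observe that $X+Y$ is itself positive semidefinite (a sum of positive semidefinite matrices), so the equivalence (1)$\Leftrightarrow$(2) is a general feature of PSD matrices, while (1)$\Leftrightarrow$(3) will follow from the standard fact that $x^{\mathrm{T}}Ax=0$ forces $Ax=0$ for a PSD matrix $A$.

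For (1)$\Leftrightarrow$(2): Any positive semidefinite matrix is diagonalizable with nonnegative real eigenvalues. Hence its determinant (the product of eigenvalues) is nonzero iff all eigenvalues are strictly positive, which is exactly positive definiteness. Applied to the PSD matrix $X+Y$, this gives the equivalence immediately.

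For (1)$\Leftrightarrow$(3): I will use the auxiliary claim that for any PSD matrix $A$ and any vector $x$, $x^{\mathrm{T}}Ax=0$ iff $Ax=0$. This is because $A$ admits a PSD square root $A^{1/2}$ (or one can write $A=B^{\mathrm{T}}B$ via Cholesky/eigendecomposition), so $x^{\mathrm{T}}Ax=\|A^{1/2}x\|^{2}$, which vanishes iff $A^{1/2}x=0$ iff $Ax=0$. Now since $x^{\mathrm{T}}Xx$ and $x^{\mathrm{T}}Yx$ are both nonnegative, the identity $x^{\mathrm{T}}(X+Y)x=x^{\mathrm{T}}Xx+x^{\mathrm{T}}Yx$ shows that $x^{\mathrm{T}}(X+Y)x=0$ iff $x^{\mathrm{T}}Xx=x^{\mathrm{T}}Yx=0$, which by the auxiliary claim is equivalent to $x\in\Ker X\cap\Ker Y$. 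Thus $X+Y$ is positive definite (i.e.\ $x^{\mathrm{T}}(X+Y)x=0$ only for $x=0$) iff $\Ker X\cap\Ker Y=\{0\}$.

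There is no real obstacle here; the only subtle point is the auxiliary claim $x^{\mathrm{T}}Ax=0\Rightarrow Ax=0$ for PSD $A$, and this is a one-line consequence of the existence of a PSD square root. Everything else is immediate, so the proof will be short.
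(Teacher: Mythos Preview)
Your proof is correct. The paper actually states this lemma without proof, treating it as a well-known observation; your argument via eigenvalues for (1)$\Leftrightarrow$(2) and via the PSD square root for (1)$\Leftrightarrow$(3) is the standard justification and fills in that gap cleanly.
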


We will use it in the following form.

\begin{cor} \label{SumOfBlockPSD}
Let $X, Y \in M_n(\R)$ be positive semidefinite matrices of the block form
\[
X =
\begin{pmatrix}
A & C \\ C^{\mathrm{T}} & D    
\end{pmatrix}, \qquad
Y =
\begin{pmatrix}
O & O \\ O & B    
\end{pmatrix},
\]
where the matrices $A \in M_k(\R)$ and $B \in M_{n-k}(\R)$ are positive definite and $O$ denotes a suitable zero matrix. Then $X+Y$ is a positive definite matrix.
\end{cor}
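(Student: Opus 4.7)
The plan is to apply Lemma \ref{SumOfPSD} and reduce the problem to a kernel computation. Since both $X$ and $Y$ are already known to be positive semidefinite, the equivalence in that lemma tells us that $X+Y$ is positive definite precisely when $\Ker X \cap \Ker Y = \{0\}$. So I would proceed by taking an arbitrary vector $v \in \Ker X \cap \Ker Y$ and showing $v = 0$.

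Write $v = \bigl(\begin{smallmatrix} v_1 \\ v_2 \end{smallmatrix}\bigr)$ with $v_1 \in \R^{k}$ and $v_2 \in \R^{n-k}$. The equation $Yv = 0$ directly gives $Bv_2 = 0$ since the block structure of $Y$ makes the top $k$ components vanish automatically. Because $B$ is positive definite, it is invertible, and therefore $v_2 = 0$. Plugging $v_2 = 0$ into $Xv = 0$ reduces it to $\bigl(\begin{smallmatrix} Av_1 \\ C^{\mathrm{T}} v_1 \end{smallmatrix}\bigr) = 0$, whose first block yields $Av_1 = 0$. Positive definiteness of $A$ then forces $v_1 = 0$, so indeed $v = 0$.

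This establishes $\Ker X \cap \Ker Y = \{0\}$, which by Lemma \ref{SumOfPSD} is equivalent to positive definiteness of the PSD matrix $X+Y$. There is no real obstacle here; the argument is just a bookkeeping exercise about how the block structure of $Y$ isolates the bottom block and how the positive definiteness of the diagonal blocks $A$ and $B$ then kills the corresponding coordinates one after the other.
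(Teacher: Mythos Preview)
Your proof is correct and follows essentially the same approach as the paper: both reduce via Lemma~\ref{SumOfPSD} to showing $\Ker X \cap \Ker Y = \{0\}$, use the block structure of $Y$ together with positive definiteness of $B$ to kill the bottom component, and then use positive definiteness of $A$ to kill the top component. The only cosmetic difference is that the paper concludes via $v_1^{\mathrm{T}} A v_1 = 0$ while you use $Av_1 = 0$ directly.
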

\begin{proof}
By the previous lemma, $X+Y$ is positive semidefinite, so it is enough to check that $\Ker X \cap \Ker Y = \{0\}$. Clearly, $\Ker Y = \R e_1 + \cdots + \R e_k$ where $e_i$ stands for the $i$th vector of the standard basis. Thus an element of $\Ker Y$ has the block form $v = \bigl(\begin{smallmatrix} x \\ 0 \end{smallmatrix}\bigr)$ where $x \in \R^k$. However, if also $v \in \Ker X$, then $x^{\mathrm{T}}Ax = 0$, which implies $x=0$ and thus $v=0$, as we need.
\end{proof}

\begin{thm} \label{rank 4}
Let $m, D\in\N$, $D \neq 1$ squarefree, and $F=\Q(\!\sqrt{D})$. Let $L$ be a classical totally positive definite quadratic $\co_F$-lattice which represents every element of $m\co_F^{+}$. Then there exists a prime $p_{1}\leq 2m^{2}$ and an integer $2 \leq C_{1}\leq \gamma_{p_{1}}$ such that if
$\Delta_{D}> 4C_{1}m^{2}\bigl(2\omega_{D}-\lfloor 2\omega_{D}' \rfloor\bigr),$
then $\rank L \ge 5$.
\end{thm}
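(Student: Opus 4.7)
I follow the template of Theorem~\ref{rank 3} closely, adding one more vector. Apply Corollary~\ref{ind cor}(1) to a pair $v_1, v_2 \in L$ with $Q(v_1) = m$, $Q(v_2) = 2m$ to obtain $p_1 \leq 2m^2$ and $2 \leq C_1 \leq \gamma_{p_1}$. Choose $v_3, v_4, v_5 \in L$ with
\[
Q(v_3) = C_1 m, \quad Q(v_4) = (\omega_D - \lfloor \omega_D'\rfloor)m, \quad Q(v_5) = (2\omega_D - \lfloor 2\omega_D'\rfloor)m,
\]
which exist because $L$ represents every element of $m\co_F^+$. I claim that these five vectors are linearly independent.

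For every pair $\{i,j\} \neq \{4,5\}$, the product $Q(v_i) Q(v_j)$ is totally bounded above by $Q(v_3) Q(v_5) = C_1 m^2 (2\omega_D - \lfloor 2\omega_D'\rfloor)$, which is totally less than $\Delta_D/4$: in the principal embedding by hypothesis, and in the conjugate embedding automatically, because the fractional parts $(\omega_D - \lfloor\omega_D'\rfloor)'$ and $(2\omega_D - \lfloor 2\omega_D'\rfloor)'$ both lie in $[0,1)$. Thus Lemma~\ref{LemNonIntBound}(2) forces $\B(v_i, v_j) \in \Z$ for all such pairs. However, $Q(v_4)Q(v_5)$ is \emph{not} totally below $\Delta_D/4$, so $\B(v_4,v_5)$ might not be a rational integer, and this is what makes the situation genuinely more delicate than in Theorem~\ref{rank 3}.

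To conclude $\det\Gr5 \neq 0$, I would decompose $\Gr5 = \Gr5^{(0)} + \Gr5^{(1)} \sqrt{D}$ with $\Gr5^{(0)}, \Gr5^{(1)} \in M_5(\Q)$. The matrix $\Gr5^{(0)}$ is positive semidefinite (as the average of the two conjugate embeddings of $\Gr5$), and its leading $3\times 3$ principal submatrix is the positive definite matrix $\Gr3$ (by Corollary~\ref{ind cor}(1)). The matrix $\Gr5^{(1)}$ is supported only on the $\{4,5\}\times\{4,5\}$ block, with strictly positive diagonal entries $t_4, t_5$ (the $\sqrt D$-coefficients of $Q(v_4)$ and $Q(v_5)$) and off-diagonal entry $s$ (the $\sqrt D$-coefficient of $\B(v_4,v_5)$). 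If the inequality $t_4 t_5 > s^2$ holds, the $2\times 2$ block of $\Gr5^{(1)} \sqrt{D}$ is positive definite, and Corollary~\ref{SumOfBlockPSD} applied to $X = \Gr5^{(0)}$ and $Y = \Gr5^{(1)}\sqrt D$ yields that $\Gr5$ is positive definite, hence $v_1, \ldots, v_5$ are linearly independent, giving $\rank L \geq 5$.

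The main obstacle is exactly the estimate $|s| < \sqrt{t_4 t_5}$. Applying Cauchy--Schwarz $\B(v_4, v_5)^2 \preceq Q(v_4) Q(v_5)$ in both embeddings, one gets $|s| \leq (|\B(v_4,v_5)| + |\B(v_4,v_5)'|)/(2\sqrt D)$, where the conjugate term is small because $Q(v_4)', Q(v_5)' < m$. Combined with the tight algebraic relation $2(\omega_D - \lfloor\omega_D'\rfloor) - (2\omega_D - \lfloor 2\omega_D'\rfloor) \in \{0, 1\}$ (forcing $t_5 = 2t_4$), this yields an upper bound for $|s|$ that is essentially $\sqrt{t_4 t_5} + O(m/\sqrt{\Delta_D})$. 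Since $s$ lies in $\tfrac12\Z$ (for $D \equiv 1 \pmod 4$) or $\Z$ (otherwise) while $\sqrt{t_4 t_5}$ is irrational, the quantitative gap supplied by the hypothesis $\Delta_D > 4 C_1 m^2 (2\omega_D - \lfloor 2\omega_D'\rfloor)$ is sufficient to upgrade the weak bound to the strict inequality, completing the argument.
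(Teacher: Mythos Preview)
Your approach is essentially the paper's. The two cosmetic differences — decomposing along $\sqrt D$ rather than $\sqrt{\Delta_D}$, and taking $\Gr5^{(0)}=\tfrac12(\Gr5+\Gr5')$ rather than $A=\Gr5'$ — both work: either matrix is positive semidefinite with $\Gr3$ in the upper-left $3\times 3$ corner, and in either normalisation the ``irrational part'' $\Gr5^{(1)}$ is supported on the $\{4,5\}\times\{4,5\}$ block, so Corollary~\ref{SumOfBlockPSD} applies exactly as you say once the $2\times2$ block is known to be positive definite.

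The one place where your write-up is genuinely incomplete is the last paragraph. The paper argues the key inequality (in its notation, $b_{45}^2<2m^2$; in yours, $s^2<t_4t_5$) by contradiction: if $b_{45}^2\ge 2m^2$ then, since $b_{45}\in\Z$ and $2m^2$ is never a square, in fact $b_{45}^2\ge 2m^2+1$. Together with $|a_{45}|<m$ this gives $|\B(v_4,v_5)|>\sqrt{(2m^2+1)\Delta_D}-m$, and feeding this into Cauchy--Schwarz \emph{in the principal embedding only} yields the explicit bound $\sqrt{\Delta_D}<\bigl(3+2\sqrt{2+1/m^2}\bigr)m^2$, contradicting $\sqrt{\Delta_D}>16m^2$ (which follows from the hypothesis). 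Your route via both embeddings and an $O(m/\sqrt{\Delta_D})$ error can be made to work, but the phrasing ``$\sqrt{t_4t_5}$ is irrational, so there is a gap'' is misleading: the distance from $\sqrt{t_4t_5}$ to the nearest element of $\tfrac12\Z$ carries no useful lower bound. The correct comparison is between $s^2$ and $t_4t_5$, which both lie in $\tfrac14\Z$ (or $\Z$), so that $s^2\ge t_4t_5$ forces $s^2\ge t_4t_5+\tfrac14$ (or $+1$); this gives a gap of order $1/m$, which beats your $O(m/\sqrt{\Delta_D})$ error precisely under the hypothesis $\sqrt{\Delta_D}\gg m^2$. (Incidentally, $t_5=2t_4$ is immediate from the $\sqrt D$-parts of $Q(v_4),Q(v_5)$ and has nothing to do with the floor-function identity you cite, which concerns only the rational parts.)
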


\begin{proof}
First we note that the assumptions of Theorem \ref{rank 3} are satisfied, since 
\[
2\omega_{D}-\lfloor 2\omega_{D}'\rfloor = 2\sqrt{\Delta_D} + (2\omega_{D}'-\lfloor 2\omega_{D}'\rfloor) > 2\sqrt{\Delta_D} > \sqrt{\Delta_D} + 1 = \omega_{D}- \omega_{D}' + 1 > \omega_{D}-\lfloor \omega_{D}' \rfloor.
\]
Therefore, by performing the same process as in the proof of Theorem \ref{rank 3}, we obtain the required $p_1, C_1$ as well as linearly independent vectors $v_{1},v_{2},v_{3},v_{4}\in L$ such that $Q(v_{1})=m$, $Q(v_{2})=2m$, $Q(v_{3})=C_{1}m$, $Q(v_{4})=(\omega_{D}-\lfloor\omega_{D}'\rfloor )m$ and $\B(v_{i},v_{j})\in\Z$ unless $i=j=4$.

Let us pick a vector $v_{5}\in L$ so that $Q(v_{5})=(2\omega_{D}-\lfloor 2\omega_{D}'\rfloor )m\in m\co_{F}^{+}$. We will show that $\det \Gr5 \neq 0$. Lemma \ref{gram} will then ensure that the vectors $v_{1},v_{2},v_{3},v_{4},v_{5}$ are linearly independent, and so $\rank L\geq 5$.

We shall often need the inequalities $0<Q(v_{4})'<m$, $0<Q(v_{5})' <m$. These follow from trivial facts about the floor function, as
\[
Q(v_4)' = \left(\omega_{D}'-\lfloor\omega_{D}'\rfloor\right)m \quad \textrm{ and } \quad 
Q(v_5)' = \left(2\omega_{D}'-\lfloor 2\omega_{D}'\rfloor\right)m.
\]

In the proof of Theorem \ref{rank 3}, we have already shown $\B(v_{i},v_{j})\in\Z$ for $1\leq i,j\leq 4$ unless $i=j=4$. In the same way, Lemma \ref{LemNonIntBound}(2) implies that if $1\leq i\leq 3$ and $j=5$, then again $\B(v_{i},v_{j})\in\Z$. Hence only for $i,j \in \{4,5\}$ the element may lie outside of $\Z$.

Our aim will be to decompose the matrix $\Gr5$ as a sum of two positive semidefinite matrices and then apply Corollary \ref{SumOfBlockPSD}. With this purpose in mind, we shall write
\[
\B(v_{i},v_{j})=a_{ij}+b_{ij}\sqrt{\Delta_{D}},
\]
where we put $a_{ij}:= \B(v_{i},v_{j})'$, which then uniquely defines the elements $b_{ij}$. First we observe that in fact, $b_{ij} \in \Z$. Indeed, if $a$ and $b$ are integers such that $\B(v_{i},v_{j})=a+b\omega_{D}$, then
\[
\B(v_{i},v_{j})=a+b\omega_{D} = (a+b\omega_{D}') + b(\omega_{D}-\omega_{D}') = \B(v_{i},v_{j})' + b\sqrt{\Delta_{D}} = a_{ij} + b\sqrt{\Delta_{D}}.
\]
Let us also note that the matrices $A=(a_{ij})$ and $B=(b_{ij})$ are by definition symmetric and that the former is positive semidefinite, as $A' = \Gr5$, which is totally positive semidefinite. We also know that unless both $i,j \in \{4,5\}$, then $a_{ij} \in \Z$ and $b_{ij}=0$.

The positive definiteness of $L$ implies that if $1\leq i\leq 3$ and $4\leq j \leq 5$, then
\begin{align*}
a_{ij}^{2}=\left(\B(v_{i},v_{j})'\right)^{2}\leq Q(v_{i})'Q(v_{j})' = Q(v_{i})Q(v_{j})'<Q(v_{i})m.
\end{align*}
Further, our inequalities about $Q(v_4)'$ and $Q(v_5)'$ can be written as $a_{44}, a_{55} \in (0,m)$. One also easily sees that $b_{44}=m$ and $b_{55}=2m$.

It remains to deal with $\B(v_{4},v_{5})$. By positive definiteness of $L$ we get $|a_{45}|<m$, since
\[
a_{45}^{2}=\big(\B(v_{4},v_{5})'\big)^{2}\leq Q(v_{4})'Q(v_{5})'<m^{2}.
\]

Now we will show that $|b_{45}| < \sqrt{2}m$. Suppose that this is not true. Then $2m^{2}-b_{45}^{2} \leq 0$. These are all integers and equality is impossible, therefore $2m^{2}-b_{45}^{2}\leq -1$, and hence $|b_{45}|\ge \sqrt{2m^2+1}$. We already know that $|a_{45}|<m$, so we obtain $\B(v_4,v_5)^2=(a_{45}+b_{45}\sqrt{\Delta_D})^2>(\sqrt{(2m^2+1)\Delta_D}-m)^2$. Using the identity for $\omega_D-\omega_D'$, we get that $Q(v_4)Q(v_5)<(\sqrt{\Delta_D}+1)(2\sqrt{\Delta_D}+1)m^2$. This implies
\begin{align*}
    Q(v_4)Q(v_5)&\ge \B(v_4,v_5)^2\\
    (\sqrt{\Delta_D}+1)(2\sqrt{\Delta_D}+1)m^2&>\bigl(\sqrt{(2+1/m^2)\Delta_D}-1\bigr)^2m^2\\
    (\sqrt{\Delta_D}+1)(2\sqrt{\Delta_D}+1)&>(2+1/m^2)\Delta_D-2\sqrt{(2+1/m^2)\Delta_D}+1\\
    m^2(3+2\sqrt{2+1/m^2})&>\sqrt{\Delta_D}.
\end{align*}
However, this is in contradiction to the inequality $\sqrt{\Delta_D} > 16m^2$, which easily follows from the assumptions of the theorem by the computation $\Delta_D > 4C_1 m^2 \bigl(2\omega_{D}-\lfloor 2\omega_{D}' \rfloor\bigr) > 4\cdot2m^2\cdot 2\sqrt{\Delta_D}$. Thus, indeed, $|b_{45}| < \sqrt{2}m$.

Let us use the information we have obtained. We have shown that
\[
\Gr5=A+B\sqrt{\Delta_D},
\]
where $A=(a_{ij})_{1\leq i,j\leq 5}$, $B=(b_{ij})_{1\leq i,j\leq 5}$. Note that $b_{ij}=0$ for $i<4$ or $j<4.$
From the previous observations we have $b_{45}^{2} < 2m^{2}=b_{44}b_{55}$, so the Sylvester's criterion (see, for example, \cite[Theorem 7.2.5]{HJ}) shows that the $2 \times 2$ matrix $(b_{ij})_{4\leq i,j\leq 5}$ is positive definite. Thus the matrix $B$ is positive semidefinite, and hence so is the matrix $B\sqrt{\Delta_{D}}$ -- however, note that we do not claim it to be \emph{totally} positive semidefinite; this is not the case, as $\Delta_D'<0$. As already noted, $A=(\B(v_{i},v_{j})')_{1\leq i,j\leq 5}$, so it is also positive semidefinite. Using Corollary \ref{SumOfBlockPSD}, we see that $\Gr5 = A+B\sqrt{\Delta_D}$ is positive definite: Both $5 \times 5$ matrices are positive semidefinite; the $2\times 2$ submatrix of $B\sqrt{\Delta_D}$ is clearly positive definite; the $3 \times 3$ matrix $(a_{ij})_{1\leq i,j\leq 3}$ is also positive definite, as it is equal to $\Gr3'$ and we already know that $v_1,v_2,v_3$ are linearly independent, making $\Gr3$ totally positive definite.

Once we know that $\Gr5$ is positive definite, the result follows from Lemma \ref{gram}.
\end{proof}

\section{Ranks 6, 7, and 8}\label{sec:678}

Among the ingredients needed in the next part of the proof is the following statement, closely related to Sylvester's criterion.

\begin{lem} \label{SylvesterPSD}
    Let $M\in M_{n}(\R)$ be a symmetric matrix of the form
    \begin{align*}
        M=\begin{pmatrix}
            A & v \\
            v^{\mathrm{T}} & a
        \end{pmatrix},
    \end{align*}
    with $A\in M_{n-1}(\R)$ positive definite, $v \in \R^{n-1}$, $a \in \R$. If $\det M\geq 0$, then $M$ is positive semidefinite.
\end{lem}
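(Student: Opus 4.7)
The plan is to use a Schur-complement decomposition. Since $A$ is positive definite, it is invertible with $\det A>0$, and the standard block-matrix identity gives
\[
\det M = \det A \cdot \bigl(a - v^{\mathrm{T}} A^{-1} v\bigr).
\]
The hypothesis $\det M\ge 0$ together with $\det A>0$ therefore forces the Schur complement $s:=a-v^{\mathrm{T}}A^{-1}v$ to be nonnegative.

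The next step is to write $M$ as a congruence of a manifestly positive semidefinite block-diagonal matrix. Setting
\[
P=\begin{pmatrix} I & A^{-1}v \\ 0 & 1 \end{pmatrix}, \qquad D=\begin{pmatrix} A & 0 \\ 0 & s \end{pmatrix},
\]
a direct multiplication (using that $A^{-1}$ is symmetric because $A$ is) gives $M=P^{\mathrm{T}}DP$. Since $A$ is positive definite and $s\ge 0$, the block-diagonal matrix $D$ is positive semidefinite, so for any $x\in\R^{n}$ one has $x^{\mathrm{T}}Mx=(Px)^{\mathrm{T}}D(Px)\ge 0$. This proves that $M$ is positive semidefinite.

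I do not expect any real obstacle: the content is a one-line Schur-complement factorization. The only point that requires a sentence of care is checking that $A^{-1}v$ is the correct off-diagonal block so that $P^{\mathrm{T}}DP$ reproduces both the off-diagonal vector $v$ and the bottom-right entry $a$; this is an immediate computation that follows from the symmetry of $A^{-1}$ and from $v^{\mathrm{T}}A^{-1}v+s=a$.

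An alternative, essentially equivalent route would be to observe that for any vector $\bigl(\begin{smallmatrix}y\\t\end{smallmatrix}\bigr)\in\R^{n}$ with $y\in\R^{n-1}$, $t\in\R$, one can complete the square as
\[
\begin{pmatrix} y^{\mathrm{T}} & t\end{pmatrix} M \begin{pmatrix} y \\ t\end{pmatrix} = (y+tA^{-1}v)^{\mathrm{T}} A\,(y+tA^{-1}v) + t^{2}s,
\]
and both summands are nonnegative; this avoids explicitly introducing $P$ and $D$ and may be preferable if one wants a fully self-contained presentation.
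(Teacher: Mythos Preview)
Your proof is correct and takes essentially the same approach as the paper: both derive the Schur complement $s=a-v^{\mathrm{T}}A^{-1}v\ge 0$ from the block-determinant formula and then complete the square. The paper presents it exactly as your ``alternative route'' (writing $w^{\mathrm{T}}Mw=(\tilde w+u)^{\mathrm{T}}A(\tilde w+u)+w_n^2 s$ with $u=w_nA^{-1}v$), which is just your congruence $M=P^{\mathrm{T}}DP$ unpacked.
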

\begin{proof}
    Let us take any $w \in \R^n$; it is enough to show that $w^{\mathrm{T}}Mw\geq 0$. We write $w=\bigl(\!\begin{smallmatrix}
        \tilde{w} \\ w_n
    \end{smallmatrix}\!\bigr)$, where $\tilde{w}\in\R^{n-1}$ and $w_{n}$ is the $n$th coordinate of $w$. Then
     \[
        w^{\mathrm{T}}Mw = \tilde{w}^{\mathrm{T}}A\tilde{w} + w_n\tilde{w}^{\mathrm{T}}v + w_n v^{\mathrm{T}}\tilde{w} + aw_{n}^{2} 
        = \left(\tilde{w}+u\right)^{\mathrm{T}}A\left(\tilde{w}+u\right) + w_{n}^{2}\left(a - v^{\mathrm{T}}A^{-1}v\right),
    \]
    where $u=w_{n}A^{-1}v$. We assume $A$ positive definite so $(\tilde{w}+u)^{\mathrm{T}}A(\tilde{w}+u)\geq 0$. Therefore, it is enough to prove that $a - v^{\mathrm{T}}A^{-1}v\geq 0$. Recall the block matrix determinant formula (see, e.g., \cite[(0.8.5.1)]{HJ}): If $X$ and $T$ are square matrices, $\det X\neq 0$, and $Y$, $Z$ are matrices of compatible size, then
    \begin{align*}
        \det \begin{pmatrix}
            X & Y \\
            Z & T
        \end{pmatrix} = \det X \det\left(T - ZX^{-1}Y\right).
    \end{align*}    
    Using the above formula for $X=A$, $Y=v$, $Z=v^{\mathrm{T}}$ and $T=a$, we obtain:
    \begin{align*}
        0\leq \det M = \det A \cdot\left(a - v^{\mathrm{T}}A^{-1}v\right).
    \end{align*}
    Since $\det A>0$, we indeed have $a - v^{\mathrm{T}}A^{-1}v\geq 0$ and the result follows.
\end{proof}

\begin{thm} \label{rank 5}
Let $m, D\in\N$, $D \neq 1$ squarefree, and $F=\Q(\!\sqrt{D})$. Let $L$ be a classical totally positive definite quadratic $\co_F$-lattice which represents every element of $m\co_F^{+}$. Then there exist primes $p_{1}, p_{2} \leq 2m^{2}$ and integers $2 \leq C_{i}\leq \gamma_{p_{i}}$ such that if
\begin{enumerate}
\item $\Delta_{D}> 4C_{1}m^{2} \bigl(C_{2}\omega_{D} - \lfloor C_{2}\omega_{D}'\rfloor\bigr)$ and
\item $\sqrt{\Delta_D}^3 > 36C_{2}m^{3} \sqrt{\Delta_{D}}^{2} + 18C_2m^{3} \sqrt{\Delta_{D}} + m^{3}$,
\end{enumerate}
then $\rank L \ge 6$.
\end{thm}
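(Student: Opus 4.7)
The plan is to extend the argument of Theorem~\ref{rank 4} by adjoining a sixth vector and analysing an enlarged Gram matrix. Since $C_2\geq 2$, one has $C_2\omega_D-\lfloor C_2\omega_D'\rfloor = C_2\sqrt{\Delta_D}+(C_2\omega_D'-\lfloor C_2\omega_D'\rfloor) \geq 2\omega_D-\lfloor 2\omega_D'\rfloor$, so condition~(1) implies the hypothesis of Theorem~\ref{rank 4}. Applying that theorem furnishes linearly independent vectors $v_1,\ldots,v_5\in L$ with $Q$-values $m$, $2m$, $C_1 m$, $(\omega_D-\lfloor\omega_D'\rfloor)m$, $(2\omega_D-\lfloor 2\omega_D'\rfloor)m$, the prime $p_1\leq 2m^2$, and an integer $2\leq C_1\leq\gamma_{p_1}$. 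Its proof also yields the decomposition $\Gr5 = A_5+B_5\sqrt{\Delta_D}$, where $B_5$ is an integer matrix supported on the $\{4,5\}$-block and $B_5|_{\{4,5\}} = \bigl(\begin{smallmatrix} m & b_{45} \\ b_{45} & 2m \end{smallmatrix}\bigr)$ is positive definite (because $|b_{45}|<\sqrt{2}\,m$).

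Next, I apply Corollary~\ref{C2cor} to $B_5|_{\{4,5\}}$ (which is positive definite and manifestly not the identity) to obtain a prime $p_2\leq m\cdot 2m = 2m^2$ and an integer $2\leq C_2\leq \gamma_{p_2}$ such that every positive semidefinite extension of the form $\bigl(\begin{smallmatrix} B_5|_{\{4,5\}} & v \\ v^{\mathrm{T}} & C_2 m \end{smallmatrix}\bigr)$ is automatically positive definite. I then pick $v_6\in L$ with $Q(v_6) = (C_2\omega_D-\lfloor C_2\omega_D'\rfloor)m \in m\co_F^+$. Condition~(1) together with Lemma~\ref{LemNonIntBound}(2) yields $\B(v_i,v_6)\in\Z$ for $i\in\{1,2,3\}$, because $Q(v_i)Q(v_6)\preceq C_1 m\cdot(C_2\omega_D-\lfloor C_2\omega_D'\rfloor)m \prec \Delta_D/4$ in the totally positive sense. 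Writing $\B(v_i,v_6) = a_{i6}+b_{i6}\sqrt{\Delta_D}$ for $i\in\{4,5,6\}$, with $a_{i6} = \B(v_i,v_6)'$ and $b_{i6}\in\Z$ (in particular $b_{66}=C_2 m$ and $a_{66}=(C_2\omega_D'-\lfloor C_2\omega_D'\rfloor)m\in(0,m)$), I extend the decomposition to $\Gr6 = A+B\sqrt{\Delta_D}$, where $A = (\B(v_i,v_j)')_{1\leq i,j\leq 6} = \Gr6'$ is positive semidefinite and $B$ is an integer matrix supported on the $\{4,5,6\}$-block with diagonal $(m,2m,C_2 m)$.

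The crux is to show that $B|_{\{4,5,6\}}$ is positive semidefinite: Corollary~\ref{C2cor} then upgrades it to positive definite, and Corollary~\ref{SumOfBlockPSD} applied to $A$ and $B\sqrt{\Delta_D}$ (using also that $A|_{\{1,2,3\}} = \Gr3'$ is positive definite by linear independence of $v_1,v_2,v_3$) produces $\Gr6$ positive definite, whence Lemma~\ref{gram} gives $\rank L \geq 6$. Since $B|_{\{4,5\}}$ is already positive definite, Lemma~\ref{SylvesterPSD} reduces positive semidefiniteness of $B|_{\{4,5,6\}}$ to verifying $\det B|_{\{4,5,6\}}\geq 0$. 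The totally positive Cauchy--Schwarz inequalities $\B(v_i,v_6)^2\preceq Q(v_i)Q(v_6)$ for $i\in\{4,5\}$, combined with the elementary bounds $|a_{i6}|<m$ and the integrality of the $b_{i6}$, force $b_{46}^{2}\leq C_2 m^2$ and $b_{56}^{2}\leq 2C_2 m^2$, with residual terms in $\sqrt{\Delta_D}$ that must be controlled. The main obstacle is the bookkeeping: expanding
\[
\det B|_{\{4,5,6\}} = 2 C_2 m^3 - m b_{56}^{2} - 2m b_{46}^{2} - C_2 m b_{45}^{2} + 2 b_{45} b_{46} b_{56}
\]
and estimating each term, the negative contributions are dominated only when $\sqrt{\Delta_D}$ exceeds a cubic threshold in $C_2 m^3$; condition~(2), i.e., $\sqrt{\Delta_D}^{3} > 36 C_2 m^3 \sqrt{\Delta_D}^{2} + 18 C_2 m^3 \sqrt{\Delta_D} + m^3$, is precisely the inequality that forces this determinant to be nonnegative.
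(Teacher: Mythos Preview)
Your overall architecture matches the paper's proof closely: you correctly invoke Theorem~\ref{rank 4}, apply Corollary~\ref{C2cor} to the $2\times 2$ block of $B$ to produce $p_2$ and $C_2$, choose $v_6$, obtain the block structure of $B$, and identify that the whole argument hinges on showing $\det \widetilde{B}\geq 0$ where $\widetilde{B}=(b_{ij})_{4\leq i,j\leq 6}$. The endgame via Lemma~\ref{SylvesterPSD}, Corollary~\ref{C2cor}, and Corollary~\ref{SumOfBlockPSD} is also right.

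The gap is in the final paragraph. You propose to show $\det\widetilde{B}\geq 0$ by expanding
\[
\det\widetilde{B}=2C_2m^3 - m\,b_{56}^2 - 2m\,b_{46}^2 - C_2 m\,b_{45}^2 + 2b_{45}b_{46}b_{56}
\]
and ``estimating each term,'' asserting that condition~(2) is ``precisely the inequality that forces this determinant to be nonnegative.'' But $\det\widetilde{B}$ is a rational integer that does not involve $\Delta_D$ at all, whereas condition~(2) is a statement about $\sqrt{\Delta_D}$. The entry bounds you cite, $b_{46}^2\leq C_2m^2$, $b_{56}^2\leq 2C_2m^2$, $b_{45}^2<2m^2$, are the best available once $\Delta_D$ is large, and plugging them in only gives $\det\widetilde{B}\geq 2C_2m^3 - 2C_2m^3 - 2C_2m^3 - 2C_2m^3 - 4C_2m^3$, which is useless. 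There is no direct domination argument here.

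The paper's key idea, which you are missing, is to argue by contradiction via a \emph{different} determinant. Assume $\det\widetilde{B}\leq -1$ and look instead at the Gram matrix $\Gra46=(\B(v_i,v_j))_{4\leq i,j\leq 6}=\widetilde{A}+\widetilde{B}\sqrt{\Delta_D}$, which is positive semidefinite because it is a Gram matrix. Expanding $\det\Gra46$ as a cubic polynomial in $\sqrt{\Delta_D}$, the leading coefficient is $\det\widetilde{B}\leq -1$; bounding the lower-order coefficients using $|a_{ij}|<m$ and the bounds on $|b_{ij}|$ gives
\[
\det\Gra46 \leq -\sqrt{\Delta_D}^{3} + 36C_2m^3\sqrt{\Delta_D}^{2} + 18C_2m^3\sqrt{\Delta_D} + m^3,
\]
which is negative by condition~(2). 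That contradicts $\det\Gra46\geq 0$. This is where condition~(2) actually enters, and your proposal does not contain this step.
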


\begin{proof}
First of all, analogously to the beginning of the proof of Theorem \ref{rank 4}, we observe that for any integer $C_2 \geq 2$, we get $C_{2}\omega_{D} - \lfloor C_{2}\omega_{D}'\rfloor \geq 2\omega_{D}-\lfloor 2\omega_{D}'\rfloor$, so the assumptions of Theorem \ref{rank 4} are satisfied and we can exploit all its conclusions.

Let $v_{1},v_{2},v_{3},v_{4},v_{5}\in L$ be vectors defined in the proof of Theorem \ref{rank 4}, and $p_{1}\leq 2m^{2}$ and $2\leq C_{1}\leq \gamma_{p_{1}}$ be the appropriate numbers from the same proof. For all $1\leq i,j\leq 5$ let the numbers $a_{ij} \in \co_F$ and $b_{ij} \in \Z$ satisfying $\B(v_{i},v_{j})=a_{ij}+b_{ij}\sqrt{\Delta_{D}}$ be the same as in the proof of Theorem \ref{rank 4}. We shall need most of the information about them obtained therein.

By the proof of Theorem \ref{rank 4}, the matrix $(b_{ij})_{4\leq i,j\leq 5} \in M_2(\Z)$ is positive definite, so Corollary \ref{C2cor} applies. This gives us the desired prime $p_{2} \leq b_{44}b_{55} = 2m^2$ and integer $2 \leq C_2 \leq \gamma_{p_{2}}$. We use it to define the next vector in our to-be-proved-linearly-independent set.

Consider any vector $v_{6}\in L$ such that $Q(v_{6})=m\left(C_{2}\omega_{D}-\lfloor C_{2}\omega_{D}'\rfloor\right)$. Let us again write $\B(v_i,v_6)=a_{i6}+b_{i6}\sqrt{\Delta_D}$ where $a_{i6}=\B(v_i,v_6)'$. Then $b_{66}=C_{2}m$, and by performing the same computations as in the proof of Theorem \ref{rank 4}, we get $a_{66}\in (0,m)$. We again have the decomposition 
\[
\Gr6=A+B\sqrt{\Delta_D},
\]
where $A=(a_{ij})_{1\leq i,j\leq 6}$, $B=(b_{ij})_{1\leq i,j\leq 6}$. We know that $A=\Gr6'$ is positive semidefinite and that $B \in M_6(\Z)$. As before, by Lemma \ref{LemNonIntBound}(2) we see that $\B(v_{i},v_{6})\in\Z$ for $i\leq 3$. Thus $b_{16}=b_{26}=b_{36}=0$, so the matrix $B$ has the block form
\[
B = 
\begin{pmatrix}
O & O \\ O & \widetilde{B}    
\end{pmatrix}, \qquad \text{where } \widetilde{B}= \begin{pmatrix} b_{ij} \end{pmatrix}_{4 \le i, j \le 6} \in M_3(\Z).
\]

Next, we can show that $|b_{46}| \leq \sqrt{C_2}m$. The approach is analogous to the one for proving $|b_{45}| < \sqrt{2}m$ in the proof of Theorem \ref{rank 4}: One assumes the contrary, which gives $|b_{46}| \geq \sqrt{C_2m^2+1}$, and in several steps one deduces $m^2 (C_2 + 2\sqrt{C_2+1/m^2}+1) \geq \sqrt{\Delta_D}$. However, this is a contradiction, as our assumptions imply $\sqrt{\Delta_D}>8C_2m^2$. In the same way, we can show $|b_{56}| \leq \sqrt{2C_2}m$.

Our next goal is to prove that $\det \widetilde{B} \geq 0$. To get a contradiction, suppose that $\det \widetilde{B} \leq -1$. Let us consider the determinant of the positive semidefinite matrix $\Gra46 = \begin{pmatrix} \B(v_i,v_j) \end{pmatrix}_{4 \le i,j \le 6}$:
\[
\det \Gra46 =
\det \begin{pmatrix} 
b_{44} \sqrt{\Delta_D}+a_{44} & b_{45}\sqrt{\Delta_D}+a_{45} & b_{46}\sqrt{\Delta_D}+a_{46} \\
b_{54}\sqrt{\Delta_D}+a_{54} & b_{55}\sqrt{\Delta_D}+a_{55} & b_{56}\sqrt{\Delta_D}+a_{56} \\
b_{64}\sqrt{\Delta_D}+a_{54} & b_{65}\sqrt{\Delta_D}+a_{65} & b_{66}\sqrt{\Delta_D}+a_{66} \\
\end{pmatrix}.
\] 
Let $\widetilde{A}=(a_{ij})_{4 \le i, j\le 6}$, and denote by $\widetilde{A}_{ij}$ (or $\widetilde{B}_{ij}$) the $2 \times 2$ submatrix of $\widetilde{A}$ (or $\widetilde{B}$) obtained by omitting the $i$th row and $j$th column. Then $\det \Gra46$ can be expanded as a polynomial in $\sqrt{\Delta_{D}}$ (note that this is also a special case of Lemma \ref{detA+B} below):
\[
\det \Gra46 = \det \widetilde{B} \cdot\sqrt{\Delta_D}^3+\sum_{4 \le i,j \le 6}(-1)^{i+j}a_{ij} \det \widetilde{B}_{ij} \cdot\sqrt{\Delta_D}^2+ \sum_{4 \le i, j \le 6}(-1)^{i+j}b_{ij}\det \widetilde{A}_{ij} \cdot\sqrt{\Delta_D}+\det \widetilde{A}.
\]

From the construction we know that $|a_{ij}|<m$ for all $4\leq i,j\leq 6$. This implies $\mathopen|\det \widetilde{A}_{ij}\mathclose|<2m^{2}$ for all $4\leq i,j\leq 6$. Moreover, $\widetilde{A}$ is positive semidefinite, so $\det\widetilde{A}< m^{3}$ by Hadamard's inequality (Lemma \ref{Hadamard}). Our bounds on the elements of $\widetilde{B}$ easily yield $\mathopen|\det\widetilde{B}_{ij}\mathclose|\leq 4C_{2}m^{2}$ for all $4\leq i,j\leq 6$.
Putting all of these bounds together leads to
\begin{align*}
\det \Gra46  & \leq -\sqrt{\Delta_{D}}^{3} + 9\cdot m\cdot 4C_{2}m^{2} \sqrt{\Delta_{D}}^{2} + 9\cdot C_2m \cdot 2m^{2} \sqrt{\Delta_{D}} + m^{3} \\
& = -\sqrt{\Delta_{D}}^{3}  + 36C_{2}m^{3} \sqrt{\Delta_{D}}^{2} + 18C_2m^{3} \sqrt{\Delta_{D}} + m^{3} <0,
\end{align*}
where the last inequality follows from the assumptions. This is a contradiction, since as a Gram matrix, $\Gra46$ is positive semidefinite. Therefore, we have $\det \widetilde{B}\geq 0$.

Finally we can exploit our choice of $C_2$. We have checked the assumptions of Lemma \ref{SylvesterPSD}, so $\widetilde{B}$ is positive semidefinite. We see that it is of the form required in Corollary \ref{C2cor}. Therefore, $\widetilde{B}$ is in fact positive definite. 

Now we can conclude the proof in exactly the same way as in the proof of Theorem \ref{rank 4}: Clearly, $\widetilde{B}\sqrt{\Delta_D}$ is also positive definite, and we already know that $A = (a_{ij})_{1 \leq i,j \leq 6} = \Gr{6}'$ is positive semidefinite and its submatrix $A = (a_{ij})_{1 \leq i,j \leq 3} = \Gr{3}'$ is positive definite, as $v_1,v_2,v_3$ are linearly independent. Thus we are exactly in the situation handled in Corollary \ref{SumOfBlockPSD}: Our matrices $A$ and $B\sqrt{\Delta_D}$ are positive semidefinite, the latter is a block matrix formed by a $3 \times 3$ positive definite matrix padded by zeros, and the former has a positive definite upper left corner. Thus, Corollary \ref{SumOfBlockPSD} ensures that $\Gr6 = A + B\sqrt{\Delta_D}$ is positive definite. By Lemma \ref{gram}, vectors $v_{1},\ldots,v_{6}$ are linearly independent. This completes our claim.
\end{proof}

\begin{thm} \label{rank 6}
Let $m, D\in\N$, $D \neq 1$ squarefree, and $F=\Q(\!\sqrt{D})$. Let $L$ be a classical totally positive definite quadratic $\co_F$-lattice which represents every element of $m\co_F^{+}$. Then there exist primes $p_{1},p_{2}\leq 2m^{2}$ and integers $2 \leq C_{i}\leq \gamma_{p_{i}}$, and also a prime $q_{1}\leq 2C_{1}m^{3}$ and an integer $1\leq D_{1}\leq q_{1}\gamma_{q_{1}}$ such that if
\begin{enumerate}
\item $\Delta_{D}> 4C_{1}m^{2} \bigl(C_{2}\omega_{D} - \lfloor C_{2}\omega_{D}'\rfloor\bigr)$,
\item $\sqrt{\Delta_D}^3 > 36C_{2}m^{3} \sqrt{\Delta_{D}}^{2} + 18C_2m^{3} \sqrt{\Delta_{D}} + m^{3}$, and
\item $\Delta_{D}> 4D_{1}m^{2} \max\bigl\{C_1,C_{2}\omega_{D} - \lfloor C_{2}\omega_{D}'\rfloor\bigr\}$,
\end{enumerate}
then $\rank L \ge 7$.
\end{thm}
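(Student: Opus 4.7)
The plan is to bootstrap from the proof of Theorem \ref{rank 5}. Since conditions (1) and (2) of the present statement match those of Theorem \ref{rank 5}, its proof supplies vectors $v_1, \ldots, v_6 \in L$, primes $p_1, p_2 \leq 2m^2$, and integers $2 \leq C_i \leq \gamma_{p_i}$ with the following properties: $v_1, \ldots, v_6$ are linearly independent, the expansion $\B(v_i, v_j) = a_{ij} + b_{ij}\sqrt{\Delta_D}$ has $b_{ij}=0$ whenever $\min(i,j) \leq 3$, and the central block $\widetilde{B} = (b_{ij})_{4 \leq i,j \leq 6}$ is positive definite. Applying Corollary \ref{ind cor}(2) to the triple $v_1, v_2, v_3$ (whose Gram matrix lies in $M_3(\Z)$ by construction) then yields the prime $q_1 \leq 2C_1 m^3$ and the integer $1 \leq D_1 \leq q_1\gamma_{q_1}$ of the statement. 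Pick any $v_7 \in L$ with $Q(v_7) = D_1 m$; such a vector exists because $D_1 m \in m\N \subset m\co_F^+$.

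The crucial step is to verify $\B(v_j, v_7) \in \Z$ for every $1 \leq j \leq 6$. By Lemma \ref{LemNonIntBound}(2) this reduces to $Q(v_j)Q(v_7) \prec \Delta_D/4$. For $j \in \{1, 2, 3\}$ both sides are rational and $Q(v_j)Q(v_7) \leq C_1 D_1 m^2 < \Delta_D/4$ follows from condition (3). For $j \in \{4, 5, 6\}$, in the first embedding one has $Q(v_j) \leq (C_2\omega_D - \lfloor C_2\omega_D'\rfloor)m$ (using $C_2 \geq 2$), so $Q(v_j)Q(v_7) \leq (C_2\omega_D - \lfloor C_2\omega_D'\rfloor)D_1 m^2 < \Delta_D/4$ by condition (3); in the conjugate embedding $Q(v_j)' < m$, yielding $Q(v_j)'Q(v_7)' < D_1 m^2 < \Delta_D/4$, since condition (3) together with $C_1 \geq 2$ forces $\Delta_D > 8 D_1 m^2$. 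This integrality check is the main obstacle, and it is precisely what dictates the form of condition (3): the $C_1$ term controls $j \in \{1, 2, 3\}$ while the $C_2\omega_D - \lfloor C_2\omega_D'\rfloor$ term controls $j \in \{4, 5, 6\}$.

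Having established this, $Q(v_7) = D_1 m \in \Z$ forces $b_{i7} = b_{7i} = 0$ for all $i$, so the decomposition extends to $\Gr7 = A + B\sqrt{\Delta_D}$, where $A = \Gr7'$ is totally positive semidefinite and $B$ remains supported on the central block $\widetilde{B}$. The $4 \times 4$ submatrix of $A$ indexed by $\{1, 2, 3, 7\}$ has integer entries and coincides with the full Gram matrix $\bigl(\B(v_i, v_j)\bigr)_{i, j \in \{1, 2, 3, 7\}}$; by Corollary \ref{ind cor}(2), the vectors $v_1, v_2, v_3, v_7$ are linearly independent, so this block is positive definite. After reordering the coordinates to place $\{1, 2, 3, 7\}$ first and $\{4, 5, 6\}$ last, $A$ becomes a matrix with positive definite upper-left $4 \times 4$ block, while $B\sqrt{\Delta_D}$ becomes block-diagonal with zero upper-left and positive definite lower-right block $\widetilde{B}\sqrt{\Delta_D}$. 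Corollary \ref{SumOfBlockPSD} then yields that $\Gr7$ is positive definite, and Lemma \ref{gram} gives the linear independence of $v_1, \ldots, v_7$, whence $\rank L \geq 7$.
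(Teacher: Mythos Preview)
Your proof is correct and follows essentially the same approach as the paper's: the paper names the new vector $v_0$ rather than $v_7$ so that the block structure needed for Corollary \ref{SumOfBlockPSD} appears without reordering, but this is purely cosmetic. Your verification of $\B(v_j,v_7)\in\Z$ is more explicit than the paper's (which just cites assumption (3)), and correctly explains why the $\max$ in condition (3) is needed.
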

\begin{proof}
Let the numbers $p_{1}$, $C_{1}$, $p_{2}$, $C_{2}$ and vectors $v_{1},\ldots ,v_{6}$ be the same as in the proof of Theorem \ref{rank 5}. Let us take the prime $q_{1}$ and the $D_{1} \in \Z^+$ from Corollary \ref{ind cor}(2). There exists a vector $v_{0}\in L$ such that $Q(v_{0})=D_{1}m$. (It would be more natural to call this vector $v_7$, but denoting it $v_0$ permutes the entries of the Gram matrix, simplifying its block structure.) 

Let us write $\B(v_{i},v_{j})=a_{ij}+b_{ij}\sqrt{\Delta_{D}}$ as before. This gives the decomposition $\Gra{0}{6} = A + B \sqrt{\Delta_D}$. For every $1 \leq i \leq 6$, we compute that $Q(v_i)Q(v_0) \prec \Delta_D/4$ thanks to assumption (3), so Lemma \ref{LemNonIntBound}(2) yields $\B(v_i,v_0) \in \Z$ and thus $b_{i0}=0$. Thus the matrix $B$ (and $B \sqrt{\Delta_D}$) still has the block structure with positive definite $3 \times 3$ part in the bottom right corner. The matrix $A = \Gra06'$ is positive semidefinite. Due to the choice of $D_1$, the vectors $v_1,v_2,v_3,v_0$ are independent by Corollary \ref{ind cor}, so the $4 \times 4$ matrix $(a_{ij})_{0 \leq i,j \leq 3} = \Gra03'$ is positive definite. Thus the situation is exactly prepared so that Corollary \ref{SumOfBlockPSD} yields the positive definiteness of $\Gra06 = A + B\sqrt{\Delta_D}$, ensuring the independence of $v_0, v_1, \ldots, v_6$.
\end{proof}

For the final part of the proof, we also need the $4 \times 4$ case of the following formula for the determinant of the sum of two matrices.

\begin{lem}[\cite{Mar}] \label{detA+B}
Let $M,N \in M_n(R)$ for any commutative ring $R$. Then
\[
\det(M+N) = \sum_{r=0}^n \sum_{\alpha \in \Xi_r}\sum_{\beta \in \Xi_r} (-1)^{s(\alpha)+s(\beta)} \det M_{\alpha}^{\beta} \det N_{\overline{\alpha}}^{\overline{\beta}},
\]
where the following notation is used: $\Xi_r$ is the set of all subsequences of $1, \ldots, n$ of length $r$; $s(\alpha)$ is the sum of all integers in the sequence $\alpha$; $M_{\alpha}^{\beta} \in M_{n-r}(R)$ is obtained from $M$ by omitting rows with indices in $\alpha$ and columns with indices in $\beta$; $N_{\overline{\alpha}}^{\overline{\beta}} \in M_r(R)$ is obtained from $N$ by only keeping rows with indices in $\alpha$ and columns with indices in $\beta$.
\end{lem}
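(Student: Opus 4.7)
The plan is to prove the formula in two steps: multilinearity followed by Laplace expansion. First, write each column of $M+N$ as the sum $m_j + n_j$ of the $j$th columns of $M$ and $N$. Multilinearity of the determinant in the columns then expands $\det(M+N)$ as a sum over all $2^n$ ways to select, for each column index $j$, either $m_j$ or $n_j$. Indexing the selection by the subset $\beta \subseteq \{1,\ldots,n\}$ of positions where $n_j$ is chosen, we obtain
\[
\det(M+N) = \sum_{\beta \subseteq \{1,\ldots,n\}} \det D_\beta,
\]
where $D_\beta$ denotes the matrix whose $j$th column equals $n_j$ for $j \in \beta$ and $m_j$ otherwise.

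Next, for each fixed $\beta$ of size $r$, I would apply generalized Laplace expansion to $\det D_\beta$ along the $r$ columns indexed by $\beta$. The expansion (see, e.g., \cite[Section 0.3]{HJ}) reads
\[
\det D_\beta = \sum_{\alpha \in \Xi_r} (-1)^{s(\alpha)+s(\beta)} \det D_\beta[\alpha \mid \beta] \cdot \det D_\beta[\overline{\alpha} \mid \overline{\beta}],
\]
where $D_\beta[\alpha \mid \beta]$ denotes the submatrix on rows indexed by $\alpha$ and columns indexed by $\beta$. By construction, the columns of $D_\beta$ indexed by $\beta$ are precisely those of $N$, so $D_\beta[\alpha \mid \beta] = N_{\overline{\alpha}}^{\overline{\beta}}$; analogously, the remaining columns of $D_\beta$ are those of $M$, and hence $D_\beta[\overline{\alpha} \mid \overline{\beta}] = M_{\alpha}^{\beta}$. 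Identifying $\Xi_r$ with subsets of size $r$ listed in increasing order, the bijection between subsets $\beta$ and pairs $(r,\beta)$ with $\beta \in \Xi_r$ is immediate.

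Substituting the inner expansion into the outer sum and regrouping by $r = |\beta|$ yields the claimed identity; the boundary terms $r=0$ and $r=n$ correctly contribute $\det M$ and $\det N$ once one adopts the standard convention that the determinant of the empty matrix equals $1$. The only genuinely delicate point is the sign $(-1)^{s(\alpha)+s(\beta)}$, but since this matches verbatim the sign produced by Laplace expansion, no further bookkeeping is needed. I do not anticipate any serious obstacle: the argument is a direct combination of two standard tools in linear algebra.
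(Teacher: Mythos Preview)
Your argument is correct and is essentially the standard proof of this identity: expand $\det(M+N)$ by multilinearity in the columns, then apply the generalized Laplace expansion along the columns drawn from $N$. The sign bookkeeping and the identification of the submatrices with $M_\alpha^\beta$ and $N_{\overline{\alpha}}^{\overline{\beta}}$ are handled correctly.

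Note, however, that the paper does not supply its own proof of this lemma at all; it simply cites \cite{Mar} and states the formula. So there is no ``paper's proof'' to compare against --- your write-up is a self-contained justification of a result the authors take as known.
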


We use it in the following form:

\begin{lem} \label{betterDetA+B}
Let $\widetilde{A}, \widetilde{B} \in M_4(\R)$, $\delta > 0$.
For $1 \leq i,j \leq 3$, let $a_i, b_j \in \R$ be such that for any $i \times i$ minor $\det X$ of $\widetilde{A}$ (that is, a determinant of any $i \times i$ submatrix $X$ of $\widetilde{A}$), and, respectively, any $j\times j$ minor $\det Y$ of $\widetilde{B}$, it holds that $\mathopen| \det X \mathclose| \leq a_i$ and $\mathopen|\det Y\mathclose|\le b_j$. 
Then
\[
\det(\widetilde{B}\delta + \widetilde{A}) \leq \det(\widetilde{B}) \delta^{4} + 16b_3 a_1 \delta^{3} + 36 b_2a_2 \delta^{2}  + 16 b_1a_3\delta + \det\widetilde{A}.
\]
\end{lem}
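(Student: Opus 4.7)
My plan is to apply Marcus's formula (Lemma \ref{detA+B}) directly to the sum $\widetilde{B}\delta + \widetilde{A}$ with $n=4$, setting $M = \widetilde{B}\delta$ and $N = \widetilde{A}$. This expands $\det(\widetilde{B}\delta + \widetilde{A})$ as a sum over $r=0,1,2,3,4$, where the $r$-th layer consists of $\binom{4}{r}^2$ terms, each of the form $\pm \det(\widetilde{B}\delta)_{\alpha}^{\beta} \det \widetilde{A}_{\overline{\alpha}}^{\overline{\beta}}$. Crucially, $(\widetilde{B}\delta)_{\alpha}^{\beta}$ is a $(4-r)\times(4-r)$ submatrix of $\widetilde{B}\delta$, whose determinant equals $\delta^{4-r}$ times the corresponding $(4-r)\times(4-r)$ minor of $\widetilde{B}$.

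Next I would peel off the two extreme layers exactly. At $r=0$ we have only $\alpha=\beta=\emptyset$, giving $\det(\widetilde{B}\delta) = \det(\widetilde{B})\delta^{4}$ with sign $+1$; this reproduces the leading term without any absolute-value loss. At $r=4$ we have $\alpha=\beta=(1,2,3,4)$, so $(\widetilde{B}\delta)_{\alpha}^{\beta}$ is empty (determinant $1$) and $\widetilde{A}_{\overline{\alpha}}^{\overline{\beta}} = \widetilde{A}$, with sign $(-1)^{20}=+1$; this yields the constant term $\det\widetilde{A}$. Keeping these two layers exact is what lets me get the coefficients $\det\widetilde{B}$ and $\det\widetilde{A}$ in the statement rather than merely bounds on them.

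For the middle layers $r = 1, 2, 3$, I would pass to absolute values term-by-term. Each term is bounded above by $\delta^{4-r} \cdot b_{4-r} \cdot a_{r}$ using the hypothesized bounds on the $(4-r)\times(4-r)$ minors of $\widetilde{B}$ and the $r \times r$ minors of $\widetilde{A}$. Counting $\binom{4}{1}^{2}=16$, $\binom{4}{2}^{2}=36$, and $\binom{4}{3}^{2}=16$ terms respectively, the three layers contribute at most $16 b_{3} a_{1}\delta^{3}$, $36 b_{2}a_{2}\delta^{2}$, and $16 b_{1}a_{3}\delta$, which are exactly the cross-terms appearing in the claim.

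I don't anticipate a substantial obstacle here; the only thing to be careful about is the identification of submatrices (arbitrary, not only principal) in Marcus's formula, so that the counting $\binom{4}{r}^{2}$ matches the statement of the hypothesis that $a_i$ and $b_j$ bound \emph{all} $i\times i$ and $j \times j$ minors, not merely the principal ones. Once that bookkeeping is set up correctly, assembling the five contributions produces the stated inequality.
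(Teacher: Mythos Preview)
Your proposal is correct and matches the paper's own proof essentially line for line: apply Lemma~\ref{detA+B} with $M=\widetilde{B}\delta$, $N=\widetilde{A}$, keep the $r=0$ and $r=4$ layers exactly to recover $\det(\widetilde{B})\delta^4$ and $\det\widetilde{A}$, and bound each of the $\binom{4}{r}^2$ terms in the layers $r=1,2,3$ by $\delta^{4-r}b_{4-r}a_r$. The paper's proof is more terse but uses precisely this argument, including the observation that the coefficients $16,36,16$ are $\binom{4}{r}^2$.
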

\begin{proof}
This is a direct application of Lemma \ref{detA+B}: The five summands correspond to $r=0, 1, 2, 3, 4$; we keep the first and last summand, while we bound all the other in an obvious way. The constants correspond to the number of terms in each of these summands, e.g.\ $36 = \binom{4}{2}\binom{4}{2}$.
\end{proof}

\begin{thm} \label{rank 7}
Let $m, D\in\N$, $D \neq 1$ squarefree, and $F=\Q(\!\sqrt{D})$. Let $L$ be a classical totally positive definite quadratic $\co_F$-lattice which represents every element of $m\co_F^{+}$. Then there exist primes $p_{1},p_{2}\leq 2m^{2}$ and integers $C_1, C_2$ satisfying $2 \leq C_{i}\leq \gamma_{p_{i}}$, and also primes $q_1, q_2$ with $q_{i}\leq 2C_{i}m^{3}$ and integers $D_1, D_2$ satisfying $1\leq D_{i}\leq q_{i}\gamma_{q_{i}}$ such that if
\begin{enumerate}
\item $\Delta_{D}> 4C_{1}D_{1} m^{2}$,
\item $\Delta_{D}> 4\max\{C_1,D_1\} m^{2} \max\bigl\{C_{2}\omega_{D} - \lfloor C_{2}\omega_{D}'\rfloor, D_{2}\omega_{D} - \lfloor D_{2}\omega_{D}'\rfloor \bigr\}$, \label{it2}
\item $\sqrt{\Delta_D}^3 > 36C_{2}m^{3} \sqrt{\Delta_{D}}^{2} + 18C_2m^{3} \sqrt{\Delta_{D}} + m^{3}$, and
\item $\sqrt{\Delta_{D}}^{4} > 192C_{2}D_{2}m^{4} \sqrt{\Delta_{D}}^{3} + 144C_{2}D_{2}m^{4}\sqrt{\Delta_{D}}^{2} + 96\max\{C_{2},D_{2}\}m^{4}\sqrt{\Delta_{D}} + m^{4}$,
\end{enumerate}
then $\rank L \ge 8$.
\end{thm}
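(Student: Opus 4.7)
The plan is to augment the vectors $v_0, v_1, \ldots, v_6$ from the proof of Theorem \ref{rank 6} by one more vector $v_7$ and follow the structure of the proofs of Theorems \ref{rank 5} and \ref{rank 6}, but now with a $4 \times 4$ nontrivial block in the $\sqrt{\Delta_D}$-part of the Gram matrix. First I would check that conditions (1)--(3) together imply the hypotheses of Theorem \ref{rank 6}: its hypothesis (3) involves $\max\{C_1, C_2\omega_D - \lfloor C_2\omega_D'\rfloor\}$, where the first case is handled by our condition (1) and the second by our condition (2) (since $\max\{C_1, D_1\} \geq D_1$). Thus I inherit linearly independent $v_0, v_1, \ldots, v_6 \in L$, constants $p_1, C_1, p_2, C_2, q_1, D_1$, and the decomposition $\B(v_i, v_j) = a_{ij} + b_{ij}\sqrt{\Delta_D}$ (with $a_{ij}, b_{ij} \in \Z$ whenever $\min(i,j) \leq 3$) such that the matrix $(b_{ij})_{4\leq i,j\leq 6}$ is positive definite. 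Applying Corollary \ref{D2cor} to this $3 \times 3$ matrix extracts a prime $q_2 \leq 2C_2m^3$ and integer $1 \leq D_2 \leq q_2\gamma_{q_2}$; I then pick $v_7 \in L$ with $Q(v_7) = (D_2\omega_D - \lfloor D_2\omega_D'\rfloor)m$.

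Conditions (1) and (2), together with Lemma \ref{LemNonIntBound}(2), force $\B(v_i, v_j) \in \Z$ whenever $\min(i,j) \leq 3$, so $\Gra{0}{7} = A + B\sqrt{\Delta_D}$ where $A = \Gra{0}{7}'$ is positive semidefinite, $B \in M_8(\Z)$, and all nonzero entries of $B$ lie in the bottom-right $4 \times 4$ block $\widetilde{B}$, whose diagonal is $(m, 2m, C_2m, D_2m)$. Repeating the integer-valued contradiction argument from Theorem \ref{rank 5} --- fueled by the lower bound on $\sqrt{\Delta_D}$ implied by condition (4) --- yields $|b_{ij}|^2 < b_{ii}b_{jj}$ for all $i, j \in \{4,5,6,7\}$, with the new cases being $|b_{47}|^2 < D_2m^2$, $|b_{57}|^2 < 2D_2m^2$, $|b_{67}|^2 < C_2D_2m^2$.

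The central step is to prove $\det\widetilde{B} \geq 0$. Assume for contradiction $\det\widetilde{B} \leq -1$. Applying Lemma \ref{betterDetA+B} to the positive semidefinite $\Gra{4}{7} = \widetilde{A} + \widetilde{B}\sqrt{\Delta_D}$ with the minor bounds $a_1 = m$, $a_2 = 2m^2$, $a_3 = 6m^3$, $\det\widetilde{A} \leq m^4$ (Hadamard) and $b_1 = \max\{C_2,D_2\}m$, $b_2 = 2C_2D_2m^2$, $b_3 = 12C_2D_2m^3$ gives
\[
\det\Gra{4}{7} \leq -\sqrt{\Delta_D}^4 + 192C_2D_2m^4\sqrt{\Delta_D}^3 + 144C_2D_2m^4\sqrt{\Delta_D}^2 + 96\max\{C_2,D_2\}m^4\sqrt{\Delta_D} + m^4,
\]
which is negative by condition (4), contradicting positive semidefiniteness. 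Hence $\det\widetilde{B} \geq 0$; then Lemma \ref{SylvesterPSD}, combined with the already-established positive definite $3 \times 3$ principal submatrix $(b_{ij})_{4\leq i,j\leq 6}$, gives $\widetilde{B}$ positive semidefinite, and the choice of $D_2$ via Corollary \ref{D2cor} upgrades this to positive definite.

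To conclude, $v_0, v_1, v_2, v_3$ are linearly independent by Corollary \ref{ind cor}(2), so the $4 \times 4$ upper-left block $\Gra{0}{3}'$ of $A$ is positive definite, while $B\sqrt{\Delta_D}$ is positive semidefinite with positive definite lower-right $4 \times 4$ block $\widetilde{B}\sqrt{\Delta_D}$ and zero entries elsewhere. Corollary \ref{SumOfBlockPSD} then makes $\Gra{0}{7}$ positive definite, so by Lemma \ref{gram} the vectors $v_0, \ldots, v_7$ are linearly independent and $\rank L \geq 8$. I expect the main obstacle to be deriving the sharp minor bounds $b_2 = 2C_2D_2m^2$ and $b_3 = 12C_2D_2m^3$ for \emph{arbitrary} (not merely principal) submatrices of $\widetilde{B}$, as this requires exploiting $|b_{ij}|^2 < b_{ii}b_{jj}$ together with the asymmetric diagonal $(m, 2m, C_2m, D_2m)$ and identifying the worst case across all choices of row and column index sets.
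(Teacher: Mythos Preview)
Your proposal is correct and follows the paper's proof essentially step for step: the same choice of $v_7$ via Corollary \ref{D2cor}, the same minor bounds plugged into Lemma \ref{betterDetA+B}, and the same conclusion via Lemma \ref{SylvesterPSD} and Corollary \ref{SumOfBlockPSD}. The one detail you will need for the minor bounds (and which the paper notes explicitly) is that Corollary \ref{D2cor} can never be satisfied with $D_2 = 1$, so $D_2 \geq 2$; together with $C_2 \geq 2$ this gives $C_2D_2 \geq 2\max\{C_2,D_2\}$, which is exactly what makes the worst-case product $|b_{i\sigma(i)}b_{j\sigma(j)}| \leq C_2D_2m^2$ (and similarly for the $3\times 3$ minors) come out to the stated values $b_2 = 2C_2D_2m^2$, $b_3 = 12C_2D_2m^3$.
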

\begin{proof}
First of all, note that the assumptions of Theorem \ref{rank 6} are satisfied, so we can use all its conclusions and also all inequalities obtained during its proof. We let the primes $p_1, p_2, q_1$, the numbers $C_1, C_2, D_1$ and the vectors $v_0, \ldots, v_6$ be the same as in the proof of Theorem \ref{rank 6}. As before, we write $\B(v_{i},v_{j})=a_{ij}+b_{ij}\sqrt{\Delta_{D}}$. From the previous proofs, we know that the $3 \times 3$ integer matrix $(b_{ij})_{4 \leq i,j \leq 6}$ is positive definite. Therefore we can apply Corollary \ref{D2cor} to this matrix to obtain a prime $q_2$ and the corresponding integer $D_2$. (For future use, observe that $D_2 \geq 2$, since the Corollary can never be satisfied with $D_2=1$.) Now, let $v_7\in L$ be a vector such that $Q(v_7)=m(D_{2}\omega_{D}-\lfloor D_{2}\omega_{D}'\rfloor)$. We extend our notation and write $\B(v_{i},v_7)=a_{i7}+b_{i7}\sqrt{\Delta_{D}}$ just as before; thus, we once again have the decomposition
\[
\Gra07=A+B\sqrt{\Delta_D},
\]
where $A=(a_{ij})_{0\leq i,j\leq 7}$, $B=(b_{ij})_{0\leq i,j\leq 7}$. We know that $A=\Gra07'$ is positive semidefinite and that $B \in M_8(\Z)$. As before, by Lemma \ref{LemNonIntBound}(2) we see that $\B(v_{i},v_{7})\in\Z$ for $0\leq i \leq 3$ thanks to assumption \eqref{it2}. Thus $b_{07}=\cdots =b_{37}=0$, so the matrix $B$ has the block form
\[
B = 
\begin{pmatrix}
O & O \\ O & \widetilde{B}    
\end{pmatrix}, \qquad \text{where } \widetilde{B}= \begin{pmatrix} b_{ij} \end{pmatrix}_{4 \le i, j \le 7} \in M_4(\Z).
\]

Now, let us prepare an auxiliary inequality: We claim that the assumption \eqref{it2} implies
\begin{equation} \label{inequality}
\sqrt{\Delta_D} > m^2 \bigl(2M + 2\sqrt{M^2 + 1/{m^2}}\bigr) \quad \text{where } M = \max\{C_2,D_2\}.
\end{equation}
It is clear that it is enough to show $\sqrt{\Delta_D} > 2m^2\left(M+\sqrt{M^2+M^2} \right)= 2(1+\sqrt2)m^2M$. On the other hand, \eqref{it2} directly yields $\Delta_D > 4 \cdot 2m^2 \max\{C_2 \sqrt{\Delta_D}, D_2 \sqrt{\Delta_D}\}$, which can be rewritten as $\sqrt{\Delta_D} > 8m^2 M$. This implies the desired inequality and thus proves our claim.

Our next goal is to prove that $\det \widetilde{B} \geq 0$. For this purpose, let us consider the determinant of the positive semidefinite matrix $\Gra47$:
\[
\det\Gra47 = \det(\widetilde{B}\sqrt{\Delta_{D}} + \widetilde{A}) \leq \det(\widetilde{B}) \sqrt{\Delta_{D}}^{4} + 16b_3 a_1 \sqrt{\Delta_{D}}^{3} + 36 b_2a_2 \sqrt{\Delta_{D}}^{2}  + 16 b_1a_3\sqrt{\Delta_{D}} + \det\widetilde{A},
\]
by Lemma \ref{betterDetA+B}, where the $a_i$ are bounds for absolute values of $i \times i$ minors of $\widetilde{A}$ and similarly for the $b_i$. We need to compute these six constants.

We have $|a_{ij}| < m$ for all $4 \leq i,j \leq 7$, so we can put $a_1 = m$. Using a trivial bound for the determinant, we can also put $a_2 = 2m^2$ and $a_3 = 6m^3$. Moreover, $\widetilde{A}$ is positive semidefinite, so Hadamard's inequality (Lemma \ref{Hadamard}) implies $\det \widetilde{A} < m^4$. 

Next, we prove that $b_1 = \max\{C_2,D_2\}m$, $b_2 = 2C_2D_2m^2$ and $b_3 = 12C_2D_2m^3$ will suffice. Recall that we have $|b_{45}| < \sqrt{2}m$ by the proof of Theorem \ref{rank 4} and $|b_{46}| \leq \sqrt{C_2}m$ together with $|b_{56}| \leq \sqrt{2C_2}m$ by the proof of Theorem \ref{rank 5}; this can be summarized as $b_{ij}^2 \leq b_{ii}b_{jj}$ for $4 \leq i,j \leq 6$. By the same technique, we get that $b_{i7}^2 \leq b_{ii}b_{77}$ for $i=4,5,6$; it turns out that the inequality \eqref{inequality} is sufficient for this part of the proof to work. All in all, $b_{ij}^2 \leq b_{ii}b_{jj}$ for any pair $4 \leq i,j \leq 7$, which yields that for every choice of pairwise distinct numbers $i,j,k\in\{4,5,6,7\}$ and every permutation $\sigma$ of the set $\{4,5,6,7\}$ we have
\begin{align*}
b_{i\sigma (i)}^{2}b_{j\sigma (j)}^{2}b_{k\sigma (k)}^{2}\leq b_{ii}b_{\sigma (i)\sigma (i)}b_{jj}b_{\sigma (j)\sigma (j)}b_{kk}b_{\sigma (k)\sigma (k)}\leq b_{55}^{2}b_{66}^{2}b_{77}^{2}=\left(2C_{2}D_{2}m^{3}\right)^{2}.
\end{align*}
Thus, $\mathopen|b_{i\sigma (i)}b_{j\sigma (j)}b_{k\sigma (k)}\mathclose|\leq 2C_{2}D_{2}m^{3}$, which yields $b_3 = 12C_2D_2m^3$ as we claimed. Similarly we obtain $\mathopen|b_{i\sigma (i)}b_{j\sigma (j)}\mathclose|\leq C_{2}D_{2}m^{2}$, which gives $b_2 = 2C_2D_2m^2$. The bound $b_1 = \max\{C_2,D_2\}m$ is easy, since the largest element has to occur on the diagonal due to the inequalities $b_{ij}^2 \leq b_{ii}b_{jj}$.

Using the upper bound achieved above for the $\det \Gra47$, we assume for contradiction that $\det \widetilde{B} <0$, that is, $\det \widetilde{B} \leq -1$. Then
\begin{align*}
\det \Gra47 \leq &-\sqrt{\Delta_{D}}^{4} + 16\cdot 12C_{2}D_{2}m^{3}\cdot m \cdot \sqrt{\Delta_{D}}^{3} +\\
 & + 36\cdot 2C_{2}D_{2}m^{2}\cdot 2m^{2}\cdot \sqrt{\Delta_{D}}^{2} + 16\cdot \max\{C_{2},D_{2}\}m\cdot 6m^{3} \cdot \sqrt{\Delta_{D}} + m^{4}.
\end{align*}
The last expression is equal to
\begin{align*}
-\sqrt{\Delta_{D}}^{4} + 192C_{2}D_{2}m^{4} \sqrt{\Delta_{D}}^{3} + 144C_{2}D_{2}m^{4}\sqrt{\Delta_{D}}^{2} + 96\max\{C_{2},D_{2}\}m^{4}\sqrt{\Delta_{D}} + m^{4},
\end{align*}
which is negative by the assumption. This is a contradiction to the positive semidefiniteness of $\Gra47$. Therefore, $\det \widetilde{B} \geq 0$.

From the proof of Theorem \ref{rank 5}, we know that the first three leading principal minors of $\widetilde{B}$ are positive, and we just proved that the fourth is nonnegative, so Lemma \ref{SylvesterPSD} yields that $\widetilde{B}$ is positive semidefinite. Thus Corollary \ref{D2cor} applies and $\widetilde{B}$ is in fact positive definite. Thus, as in the previous proofs, we can apply Corollary \ref{SumOfBlockPSD} to get that $\Gra07 = A + B\sqrt{\Delta_D}$ is positive definite. This means linear independence of $v_0, \ldots, v_7$ and concludes the proof.
\end{proof}

\section{Explicit Kitaoka's Conjecture}\label{sec:ternary}
In this final section, we will identify all the real quadratic fields that may admit a universal ternary lattice. This generalizes the seminal work of Chan, Kim, and Raghavan \cite{CKR}, where they identified all quadratic fields with classical universal ternary quadratic forms. Specifically, we prove the following refinement of Theorem \ref{ThmMain2}, using the already established Theorem \ref{MAINTHEOREM1}.

\begin{thm}\label{thm:ternary}
If a real quadratic field $\Q(\!\sqrt{D})$ admits a universal ternary lattice, then $D$ is one of the thirteen values: $2,3,5,6,7,10,13,17,21,33,41,65,77$. Furthermore, for $D=10, 65$, no free universal ternary lattice exists.
\end{thm}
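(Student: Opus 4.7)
The plan is to combine the explicit discriminant bound from Theorem \ref{MAINTHEOREM1} with a finite case-by-case verification in Magma. First, if $L$ is a (possibly non-classical) universal ternary lattice over $\Q(\!\sqrt D)$, then the rescaled lattice $(L, 2Q)$ is classical, still ternary, and represents every element of $2\co_F^+$. Applying the $m = 2$, $\rank L \leq 3$ row of Theorem \ref{MAINTHEOREM1} yields $\Delta_D < 10{,}000$, leaving only finitely many squarefree $D$ to analyse.

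For each such $D$ I would proceed by escalation, refining the constructions of Sections \ref{sec:tech}--\ref{sec:678}. A universal ternary $L$ must represent $1$, $2$, $\omega_D - \lfloor \omega_D' \rfloor$, and further small totally positive elements; the vectors realising these representations have tightly constrained Gram data, thanks to Lemma \ref{LemNonIntBound}(2) and the positive-definiteness bounds established in the proofs of Theorems \ref{rank 3}--\ref{rank 5}. Enumerating the finitely many candidate Gram matrices of rank $\leq 3$ produces, for each $D$, a short list of lattices on which I would test universality directly in Magma --- covering both free lattices and non-free lattices $\co_F b_1 + \co_F b_2 + \mathfrak{a}^{-1} b_3$ indexed by the (finite) ideal class group. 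Local square-class obstructions from Corollary \ref{local value} together with the anisotropy criterion of Lemma \ref{aniso ternary} should discard most fields quickly; the $D$ that survive are the thirteen listed values, and for each a candidate universal ternary is exhibited (see Section \ref{subsec:candidates}). For the sharper claim that $D = 10, 65$ admit no \emph{free} universal ternary lattice, one uses the fact that both fields have class number $2$, runs the same enumeration restricted to free lattices, and checks computationally that every surviving free candidate fails to represent some specific element of $\co_F^+$; the non-free lattices $\varphi^{(10)}, \varphi^{(65)}$ of Section \ref{subsec:candidates} then fill the gap.

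The chief obstacle is computational volume rather than any conceptual issue: the bound $\Delta_D < 10{,}000$ admits several thousand fields, and the borderline ones (where $\Delta_D$ is close to the threshold) have the weakest Gram-matrix constraints, so careful local pruning and an efficient enumeration in Magma are essential to keep the verification tractable. A secondary technical difficulty is handling the non-free lattices cleanly, so that for fields with non-trivial class group no universal candidate is overlooked.
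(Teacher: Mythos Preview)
Your overall strategy matches the paper's: reduce to $\Delta_D < 10{,}000$ via the $m=2$ row of Theorem~\ref{MAINTHEOREM1}, then dispose of the remaining $D$ by a finite computation in Magma, and finally appeal to the full candidate list (Theorem~\ref{thm:list}) to handle the freeness claim for $D=10,65$.

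There is one meaningful methodological difference in the elimination step. You describe enumerating candidate rank-$\leq 3$ Gram matrices and then \emph{testing universality} of each surviving lattice. The paper avoids this entirely: for each $D$ outside the thirteen values it exhibits a small explicit set $S_D \subset \co_F^+$ (typically four to eight elements, e.g.\ $\{1,2,\alpha_1,\alpha_2\}$) and verifies that no totally positive semidefinite matrix in $M_{|S_D|}\bigl(\tfrac12\co_F\bigr)$ of rank $\leq 3$ has $S_D$ on its diagonal. This is a clean finite check, whereas ``testing universality in Magma'' is not a priori finite without a 290-type criterion over $F$; even showing that a candidate represents all elements up to some norm does not prove universality. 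The local tools you invoke (Corollary~\ref{local value}, Lemma~\ref{aniso ternary}) play no role in this elimination --- they are used only in Section~\ref{sec:tech} to set up the $C_i,D_i$ constants for Theorem~\ref{ThmMain1}.

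One small correction: you say the borderline fields (large $\Delta_D$) have the weakest Gram-matrix constraints. It is the opposite. Lemma~\ref{LemNonIntBound}(2) forces more off-diagonal entries into $\Z$ as $\Delta_D$ grows, so large discriminants are the easy ones; the exceptional cases in Propositions~\ref{pr:1mod4} and~\ref{pr:23mod4} needing extra test elements are all small $D$ (e.g.\ $29,37,11,14$).
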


Based on computational evidence, we conjecture that the result above should be ``if and only if''. 

\begin{conjecture}\label{conj:list}
Let $D \in \{2,3,5,6,7,10,13,17,21,33,41,65,77\}$, $F=\Q(\!\sqrt{D})$. Then there exists a universal ternary $\co_F$-lattice. 
\end{conjecture}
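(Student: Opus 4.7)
The strategy is constructive: for each of the thirteen values of $D$, we would exhibit an explicit candidate ternary $\co_F$-lattice $L_D$ and prove that $Q(L_D\setminus\{0\}) = \co_F^+$. For $D=2,3,5$, the classical universal ternaries of Chan--Kim--Raghavan \cite{CKR} serve directly. For the remaining ten values, I would take (as the paper itself does in Section \ref{subsec:candidates}) candidate lattices produced by a computer search, including the two non-free lattices $\varphi^{(10)}$ and $\varphi^{(65)}$, whose non-freeness Theorem \ref{thm:ternary} shows to be necessary. The candidates themselves are built by the standard escalation procedure: starting from a well-chosen binary anisotropic sublattice, one iteratively adjoins a vector whose norm is the smallest totally positive element not yet represented, up to units, and prunes those extensions that fail obvious local obstructions.

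The verification of universality of each $L_D$ splits into a finite computation and a local-to-global argument. First, one enumerates representatives $\alpha_1,\dots,\alpha_N$ of $\co_F^+$ modulo $(\co_F^\times)^2$ with trace below an explicit bound $B$; for each $\alpha_i$, Cauchy--Schwarz applied to the Gram matrix of $L_D$ bounds the coordinates of any $v \in L_D$ with $Q(v)=\alpha_i$, so representability is decided by a brute-force search in Magma. Second, one shows that every totally positive $\alpha \in \co_F$ with sufficiently large trace is represented. This uses three ingredients: universality of $L_D \otimes (\co_F)_\mathfrak{p}$ at every completion, which is checked prime-by-prime via Jordan-block decomposition (see \cite[\S 91--93]{OM}); analysis of the spinor genus of $L_D$, to pass from local representability to representability by the isometry class of $L_D$; and, for elements in any spinor-exceptional class, a Hilbert-modular analogue of the Duke--Schulze-Pillot theorem \cite{DSP} giving an asymptotic lower bound on representation numbers.

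The principal obstacle is the spinor-genus/class gap, since ternary lattices over number fields can violate the Hasse principle. Concretely, one would need either to show that each $L_D$ is the unique class in its spinor genus (so that local data alone determines representability), or to describe the finite exceptional set analytically and cover it by the enumeration step with $B$ taken large enough. The former is a finite Magma check via spinor-norm computations at the ramified and dyadic primes, but there is no a priori guarantee it will succeed; if the spinor genus of some $L_D$ contains extra classes, the argument hinges on an effective subconvex bound for twists of Hilbert-modular $L$-functions, which is currently out of reach with the explicit constants one would need. In practice, the proof would proceed case by case for each of the thirteen values of $D$, with the computational bottleneck growing with the discriminant---the hardest case being $D=77$, where both the enumeration range and the genus computation are largest, and where a failure of the single-class property in the spinor genus would be the most serious impediment.
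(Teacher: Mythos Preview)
This statement is a \emph{Conjecture} in the paper; the paper does not prove it. What the paper establishes (Theorem~\ref{thm:h=1}, together with \cite{CKR} for $D=2,3,5$) is the conjecture for $D\in\{2,3,5,13,17\}$ only: in each of these five fields one of the candidate ternary lattices has class number~$1$, so the integral local-global principle applies and local universality suffices. The paper explicitly notes that these are the \emph{only} five fields among the thirteen admitting a class-number-$1$ candidate. For the remaining eight values $D\in\{6,7,10,21,33,41,65,77\}$, the paper exhibits explicit candidates (Theorem~\ref{thm:list}) and verifies computationally that each represents every totally positive integer of norm at most $250{,}000$, but proves nothing further.

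Your outline is the natural strategy, and you have correctly located the obstruction yourself. But this is exactly why the statement remains conjectural. The class-number-$1$ route is already known to fail for the eight open values; your refinement to a single class in the spinor genus is not examined in the paper, and even if it held one would still need to control spinor-exceptional square classes, which for ternaries is genuinely nontrivial. Your fallback---an effective Hilbert-modular analogue of Duke--Schulze-Pillot with constants small enough to be met by a finite enumeration---is, as you concede, currently out of reach. So what you have written is a reasonable research plan rather than a proof; the gap you name in your last paragraph is the real gap, and it is open.
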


We have concrete candidates for the universal lattices in these $13$ fields (see Theorem \ref{thm:list}). However, proving universality of a given quadratic lattice can be very difficult, and the one simple tool at our disposal works only for certain lattices. Namely, there is a notion of the \emph{class number} of a quadratic lattice; and if a lattice has class number $1$, then it satisfies an (integral) \emph{local-global principle} \cite[102:5]{OM}. In such a case, to prove universality of $L$, it is enough to study $L$ over all the local fields $F_{\mathfrak{p}}$ where $\mathfrak{p}$ is a prime ideal in $F$. Lattices over local fields are well understood, so if a lattice has class number $1$, which we easily compute in Magma, then proving its universality is routine work.

Thus, Conjecture \ref{conj:list} is proven only for $D=2,3,5,13,17$. In those fields (and only in them), one finds candidates for universal ternary lattices with class number $1$, and by local-global principle, universality follows (see Theorem \ref{thm:h=1}). If we require a free classical universal ternary $\co_F$-lattice, then only three such quadratic fields exist, namely $D=2,3,5$, as proved in \cite{CKR}. As a corollary of our results, we can strengthen this as follows.

\begin{thm} \label{CKRimproved}
A real quadratic field $\Q(\!\sqrt{D})$ admits a classical universal ternary lattice if and only if it admits a free classical universal ternary lattice; this holds precisely if $D = 2, 3, 5$.
\end{thm}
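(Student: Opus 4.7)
The plan is to combine the main discriminant bound from Theorem \ref{ThmMain2} with the existing classification of free classical universal ternary forms by Chan--Kim--Raghavan \cite{CKR}, and to cover the two remaining fields by a direct check in Magma.

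By Theorem \ref{ThmMain2}, any real quadratic field admitting a universal ternary lattice has $D$ in the thirteen-element list, and this a fortiori applies when the lattice is classical. For $D \in \{2,3,5\}$, \cite{CKR} constructs explicit free classical universal ternary forms, so the ``if'' direction holds together with freeness. I would handle the remaining ten values by splitting on the class number $h(F)$. Eight of them, $D \in \{6,7,13,17,21,33,41,77\}$, have $h(F)=1$, so every $\co_F$-lattice is free; hence any classical universal ternary lattice over such a field would correspond to a free classical universal ternary quadratic form, contradicting \cite{CKR}.

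The remaining values $D = 10$ and $D = 65$ are the only ones in the list with $h(F) = 2$. A hypothetical classical universal ternary lattice $L$ over one of these fields is either free -- already excluded by \cite{CKR}, or equivalently by the second assertion of Theorem \ref{thm:ternary} -- or of the non-free form $\co_F b_1 + \co_F b_2 + \mathfrak{a}^{-1} b_3$, where $\mathfrak{a}$ represents the non-trivial ideal class. To rule out this non-free classical case, I would reuse the setup of Theorem \ref{thm:ternary}: the $m = 1$ row of Theorem \ref{MAINTHEOREM1} bounds $\Delta_D$, and the classicality condition $\B(L,L) \subset \co_F$ combined with the integrality of $Q$ severely restricts the local structure at every prime (in particular $\B(b_3,b_i) \in \mathfrak{a}$ and $Q(b_3) \in \mathfrak{a}^2$), so Magma can enumerate the finitely many isometry classes of non-free classical totally positive definite ternary $\co_F$-lattices passing these tests, and for each candidate exhibit a totally positive element of $\co_F$ that is not represented.

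The main obstacle is this last computational step: non-free lattices are less convenient to manipulate than quadratic forms, so one must carry out the enumeration of admissible rescalings of the third basis vector and of local invariants carefully. However, only two fixed small fields are involved and the search space for classical lattices is much smaller than in the non-classical setting, so the verification is essentially the same in flavor as the free-lattice sub-computation already needed for $D = 10, 65$ in Theorem \ref{thm:ternary}, and is expected to be routine.
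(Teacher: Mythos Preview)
Your approach is correct, but it takes a longer detour than the paper. The paper deduces Theorem~\ref{CKRimproved} directly from Theorem~\ref{thm:list}: that theorem already enumerates \emph{all} universal ternary lattices (free or not, classical or not) over $\Q(\!\sqrt{D})$ for each of the ten values $D\in\{6,7,10,13,17,21,33,41,65,77\}$, and a one-line inspection of the cross-terms shows that every one of the $34$ listed forms has a half-integral bilinear value (for instance, the $xy$-coefficient of $\varphi^{(65)}$ is $1$, and $\B(b_2,b_3)=\tfrac{6+\sqrt{10}}{2}\notin\co_F$ for $\varphi^{(10)}$). Hence no classical universal ternary exists for these $D$, and the equivalence with the free classical case together with the value $D=2,3,5$ drops out immediately from \cite{CKR}.

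Your split by class number is a pleasant shortcut for the eight fields with $h(F)=1$, since there you bypass Theorem~\ref{thm:list} entirely and quote \cite{CKR} directly. But for $D=10,65$ you propose a fresh Magma enumeration of non-free classical lattices; this is redundant, because Theorem~\ref{thm:list} already tells you that the only candidates over these two fields are the single non-free lattices $\varphi^{(10)}$ and $\varphi^{(65)}$, and checking that each is non-classical is immediate. So your route works, but trades one citation of the full classification for an extra (and avoidable) computation.
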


This is indeed an improvement, as the technique used in \cite{CKR} relied on the freeness of the lattices. Furthermore, since all the three fields in question have class number $1$, they only admit free lattices. This means that in fact, \emph{over real quadratic fields, there exist no non-free classical universal ternary lattices}. 

\smallskip

After this introduction, let us proceed to the proof of Theorem \ref{thm:ternary}. Take a field $F=\Q(\!\sqrt{D})$ equipped with a universal ternary lattice $(L,Q)$, possibly non-classical. Then the lattice $(L,2Q)$ is classical and represents all of $2\co_F^+$, so by Theorem \ref{MAINTHEOREM1}, we know that $\Delta_D < 10000$.
Therefore, it remains to check all the small eligible values of $D$, which we do using a simple computer program. We split the discussion into two cases, depending on the value of $D$ modulo 4. The Magma codes used for the proofs of Propositions \ref{pr:1mod4}, \ref{pr:23mod4} and Theorems \ref{thm:list}, \ref{thm:h=1} are available upon request. 

\begin{prop} \label{pr:1mod4}
Assume that $D \equiv 1 \pmod4$ is squarefree,  $D \not\in \{5,13,17,21,33,41,65,77\}$,  $1<D<10000$. Then $\Q(\!\sqrt{D})$ admits no ternary universal quadratic lattice.
\end{prop}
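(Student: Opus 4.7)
The plan is to use the bound $\Delta_D = D < 10000$ obtained from Theorem \ref{MAINTHEOREM1} by passing from a (possibly non-classical) universal ternary $(L,Q)$ to the classical lattice $(L,2Q)$ which represents $2\co_F^+$, i.e., the case $m=2$. This reduces the proposition to a finite check: there are only finitely many squarefree $D \equiv 1 \pmod 4$ with $1 < D < 10000$, and after excluding the eight listed exceptions, each remaining $D$ must be ruled out individually. The whole argument is therefore computational in spirit, implemented in Magma, and the sketch below describes the structure of that computation.

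For a fixed $D$ on the finite list, I would argue by contradiction: assume a universal ternary $\co_F$-lattice $L$ exists. Since $L$ represents $1$, there is $v_1 \in L$ with $Q(v_1) = 1$. Set $L^{\#} = \{w \in L : \B(v_1,w) = 0\}$; this is a totally positive definite binary $\co_F$-lattice, and $\co_F v_1 \perp L^{\#}$ is a sublattice of $L$ of index at most $2$ (equality holds in the classical case). The key reduction is to enumerate all possible $L^{\#}$, together with the finitely many admissible gluings producing $L$ in the non-classical case. Reduction theory for binary totally positive definite $\co_F$-lattices bounds the entries of a reduced pseudo-basis Gram matrix in terms of the smallest elements of $\co_F^+$ that must be represented beyond what $\co_F v_1$ already provides; since $D < 10000$, these small elements can be listed explicitly in each field, producing a finite family of candidate Gram matrices. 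One must remember to include non-free lattices in fields with non-trivial class group (which occur in this range), since the universal ternary need not be free.

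For each candidate ternary lattice $M$ obtained in this way, the Magma routine searches for a totally positive $\alpha \in \co_F^+$ not represented by $M$: one enumerates all $v \in M$ with $Q(v)$ in a box of bounded norm and trace, compiles the set of represented values, and exhibits a small $\alpha$ missing from the list, thereby contradicting the universality of $L$. For each of the eight excluded $D$ one instead finds such a candidate which appears universal (as recorded by Conjecture \ref{conj:list} and Theorem \ref{thm:list}), and for the remaining $D$ the search succeeds in ruling out every candidate.

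The main obstacle is \emph{completeness} of the enumeration in the second step: one must prove that every $\co_F$-isometry class of totally positive definite universal ternary lattice over $\co_F$ appears in the finite candidate list, including non-free and non-classical ones, and that the local gluing analysis at each prime dividing the discriminant of $\co_F v_1 \perp L^{\#}$ is exhaustive. A secondary technical point is proving non-representability: the short-vector enumeration bound must be chosen so that if the proposed witness $\alpha$ is not encountered among the values $Q(v)$ for $v$ in the search region, then $\alpha$ is genuinely not represented anywhere in $M$. Both points are standard but delicate, and the Magma implementation encodes the required reduction theory and short-vector algorithms over $\co_F$ for each $D$ in the finite list.
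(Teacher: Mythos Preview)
Your overall framing (reduce to a finite list of $D$ via Theorem~\ref{MAINTHEOREM1} with $m=2$, then handle each $D$ computationally) matches the paper, but the computational step you propose is both different from and harder than what the paper actually does, and it contains a real gap.

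The paper does \emph{not} attempt to enumerate candidate ternary lattices $L$. Instead, for each $D$ it exhibits a small finite set $S_D \subset \co_F^+$ (for most $D$ just the four elements $1,2,\alpha_1,\alpha_2$ with $\alpha_1=\tfrac{\lceil\sqrt D\rceil^{\mathrm{odd}}+\sqrt D}{2}$ and $\alpha_2=\lceil\sqrt D\rceil+\sqrt D$; a handful of stubborn $D$ need seven or eight elements) and verifies in Magma that there is \emph{no} totally positive semidefinite matrix in $M_{|S_D|}\!\bigl(\tfrac12\co_F\bigr)$ of rank $\le 3$ with the elements of $S_D$ on the diagonal. This is a finite search because Cauchy--Schwarz bounds every off-diagonal entry in both real embeddings. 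No structural information about $L$ beyond ``$L$ represents each element of $S_D$'' is ever used, and in particular no classification of $L$ is required.

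Your route instead tries to classify $L$ via the splitting $\co_F v_1 \perp L^{\#}$, and here is the gap: reduction theory for binary $\co_F$-lattices bounds the entries of a reduced Gram matrix \emph{in terms of the discriminant}; it does not by itself bound the discriminant (or the second minimum) of $L^{\#}$. Universality of $L$ gives no direct upper bound on $\det L^{\#}$, so as written your enumeration is infinite. One can rescue this by arguing that $L$ must represent several specific small non-square elements and that this forces the sublattice they generate---hence, after projection, $L^{\#}$---to have bounded minima; but making that precise is exactly a Gram-matrix enumeration of the kind the paper performs directly, only now with an additional layer of orthogonal splitting and gluing bolted on. (A minor aside: in the non-classical case the index of $\co_F v_1 + L^{\#}$ in $L$ can be $4$, not just $1$ or $2$, since $w\mapsto \B(v_1,w)\bmod\co_F$ embeds the quotient into $\tfrac12\co_F/\co_F$, which has order $4$.)
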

\begin{proof}
The proof is mostly computational. Denote $F=\Q(\!\sqrt{D})$. We start by defining two elements of $\co_F^+$: $\alpha_1 := \frac{\lceil \!\sqrt{D} \rceil^{\mathrm{odd}}+\sqrt{D}}{2}$, $\alpha_2 := \lceil \!\sqrt{D} \rceil+ \sqrt{D}$, where $\lceil \,\cdot\,\rceil ^{\mathrm{odd}}$ denotes the smallest odd integer upper bound.
	
Assume first that $D$ is not among the $22$ exceptional numbers $29$, $37$, $53$, $57$, $61$, $69$, $73$, $85$, $89$, $93$, $101$, $105$, $129$, $133$, $161$, $177$, $217$, $253$, $273$, $301$, $329$, $341$. Then one can c          heck that no ternary $\co_F$-lattice represents the following four elements at the same time: $1, 2, \alpha_1, \alpha_2$. This is equivalent to verifying that in $M_4 \bigl(\frac12\co_F\bigr)$, there does not exist a totally positive semidefinite matrix of rank at most 3 with $1, 2, \alpha_1, \alpha_2$ on its diagonal. We checked this by running a simple program in Magma.

Assume now that $D$ is one of the $22$ exceptions, but not $29$, $37$, $57$, $61$, $85$, $105$, $133$, $273$, $329$. Then one can check in Magma that no ternary $\co_F$-lattice represents the following seven elements at the same time: $1, 2, 3, 5, \alpha_1, \alpha_3, \alpha_5$. Here, $\alpha_3 := \frac{\lceil 3\sqrt{D} \rceil^{\mathrm{odd}}+3\sqrt{D}}{2}$ and $\alpha_5 := \frac{\lceil 5\sqrt{D} \rceil^{\mathrm{odd}} + 5\sqrt{D}}{2}$.

Finally, let $D$ be among the nine most problematic values, namely $29$, $37$, $57$, $61$, $85$, $105$, $133$, $273$, $329$. Then, similarly, one can check that no ternary $\co_F$-lattice represents all elements of $S_D$, where:
\begin{align*}
S_{29} &= \{1,2,5,6,\alpha_1,\alpha_1' \},\\
S_{37} &= \{1,2,5,\alpha_1,\alpha_1' \},\\
S_{57} &= \{1,2,4+\omega_D,131+40\omega_D \},\\
S_{61} &= \{1,2,5-\omega_D,9-2\omega_D,92+27\omega_D,133+39\omega_D\},\\
S_{85} &= \{1,2,5+\omega_D,21+5\omega_D,38+9\omega_D\},\\
S_{105} &= \{1,2,5+\omega_D,19+4\omega_D,45-8\omega_D\},\\
S_{133} &= \{1,2,8+\omega_D,27+5\omega_D, 79+15\omega_D\},\\
S_{273} &= \{1,2,8+\omega_D,683+88\omega_D\},\\
S_{329} &= \{1,9+\omega_D,29-3\omega_D,48-5\omega_D,60+7\omega_D\}.
\qedhere
\end{align*}
\end{proof}

And here is the analogous claim in the case when $D\equiv 2,3 \pmod{4}$.

\begin{prop} \label{pr:23mod4}
	Assume that $D \not\equiv 1 \pmod4$ is squarefree, $D \not\in \{2,3,6,7,10\}$, $D<2500$. Then $\Q(\!\sqrt{D})$ admits no ternary universal quadratic lattice.
\end{prop}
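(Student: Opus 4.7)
The plan is to mirror the computational approach of Proposition \ref{pr:1mod4}. Since $D\equiv 2,3\pmod 4$, we have $\omega_D=\sqrt{D}$, so $\co_F=\Z[\sqrt{D}]$ and $\frac12\co_F=\frac12\Z+\frac12\sqrt{D}\Z$. For a given finite set $S=\{\mu_1,\ldots,\mu_n\}\subset\co_F^+$, a ternary $\co_F$-lattice represents all of $S$ if and only if there exists a totally positive semidefinite matrix $M\in M_n(\tfrac12\co_F)$ of rank at most $3$ with diagonal $(\mu_1,\ldots,\mu_n)$. This is because such an $M$ is precisely the Gram matrix of $n$ vectors in a ternary quadratic space over $F$ having the right norms. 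Thus the whole proposition reduces to a finite enumeration that can be executed in Magma, exactly as in Proposition \ref{pr:1mod4}.

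The natural analogues of the elements $\alpha_k$ used there are obtained by noting that elements of small norm in $\co_F^+$ (for $D\equiv 2,3\pmod 4$) arise from the continued fraction convergents to $\sqrt{D}$; the simplest such elements are $\beta_k:=\lceil k\sqrt{D}\rceil+k\sqrt{D}$, with $\beta_k'=\lceil k\sqrt{D}\rceil-k\sqrt{D}\in(0,1]$. First, I would run the following check for every squarefree $D\equiv 2,3\pmod 4$ in the interval $1<D<2500$ with $D\notin\{2,3,6,7,10\}$: decide whether there is a totally positive semidefinite $M\in M_4(\tfrac12\co_F)$ of rank at most $3$ with diagonal $(1,2,\beta_1,\beta_2)$. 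The Cauchy--Schwarz inequality $\B(v_i,v_j)^2\preceq Q(v_i)Q(v_j)$ bounds each off-diagonal entry of $M$ in both embeddings, so there are only finitely many candidates in $\tfrac12\co_F$ to test, and each test reduces to checking total positive semidefiniteness and that all $4\times 4$ minors vanish. For the overwhelming majority of eligible $D$, no such matrix exists, ruling out a universal ternary lattice.

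For the (small) list of $D$ where the set $\{1,2,\beta_1,\beta_2\}$ alone does not suffice, I would enlarge the test set by adjoining further small totally positive elements -- typically $3$, $5$, $\beta_3$, $\beta_5$, and, if still needed, carefully chosen elements of norm $\leq 10$ obtained from the convergents to $\sqrt{D}$ or products with the fundamental unit. For each remaining $D$ an explicit finite set $S_D$ is recorded, and the Magma check that no totally positive semidefinite rank-$\leq 3$ matrix in $M_{|S_D|}(\tfrac12\co_F)$ has $S_D$ on its diagonal completes the proof. Since the theoretical bound from Theorem \ref{MAINTHEOREM1} (applied to $(L,2Q)$ classical representing $2\co_F^+$) already gives $\Delta_D<10{,}000$, the range $D<2500$ is indeed sufficient when $D\equiv 2,3\pmod 4$.

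The main obstacle, exactly as in Proposition \ref{pr:1mod4}, is identifying the correct custom test set $S_D$ for the handful of stubborn discriminants where the generic set $\{1,2,\beta_1,\beta_2\}$ is insufficient; this requires experimentation, because for those $D$ the field really does admit a lot of ternary lattices representing many small elements, and one must combine norm-incompatibilities with sign constraints in both embeddings to force a contradiction. In terms of implementation, the rank-$\leq 3$ test is handled by enumerating the finitely many off-diagonal candidates compatible with Cauchy--Schwarz and checking the vanishing of all $4\times 4$ minors together with total positive semidefiniteness; efficiency comes from first bounding entries involving $\sqrt D$ via the second embedding, where $\mu_i'$ is small for $\mu_i=\beta_k$.
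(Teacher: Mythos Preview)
Your proposal is correct and follows essentially the same approach as the paper: the paper's proof uses exactly your elements $\beta_k=\lceil k\sqrt{D}\rceil+k\sqrt{D}$ (there denoted $\alpha_k$), first rules out all but $26$ values of $D$ via the set $\{1,2,\alpha_1,\alpha_2\}$, then eliminates most of these with the enlarged set $\{1,2,3,5,\alpha_1,\alpha_2',\alpha_3,\alpha_5\}$, and finishes the remaining seven stubborn discriminants $D\in\{11,14,22,26,38,46,62\}$ with custom sets $S_D$. The only detail you did not anticipate is that the conjugate $\alpha_2'$ appears in the second-level test set, but this falls within the ``carefully chosen elements'' you allowed for.
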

\begin{proof}
	Put $F=\Q(\!\sqrt{D})$. Again we start by defining several elements of $\co_F^+$: $\alpha_j := \lceil j\sqrt{D} \rceil+j\sqrt{D}$.
	
	Assume first that $D$ is not among the $26$ exceptional numbers 11, 14, 15, 19, 22, 23, 26, 30, 31, 35, 38, 39, 42, 46, 47, 55, 62, 66, 67, 70, 78, 83, 86, 91, 94, 102. Then, just as in the previous proof, one can check that no ternary $\co_F$-lattice represents the following four elements at the same time: $1, 2, \alpha_1, \alpha_2$.
	
	Assume now that $D$ is one of the exceptions, but not $11$, $14$, $22$, $26$, $38$, $46$, $62$. Then one can check that no ternary $\co_F$-lattice represents the following eight elements at the same time: $1, 2, 3, 5, \alpha_1, \alpha_2', \alpha_3, \alpha_5$.
	
	Finally, let $D$ be among the seven most problematic values, namely $11$, $14$, $22$, $26$, $38$, $46$, $62$. Then, similarly, one can check that no ternary $\co_F$-lattice represents all elements of $S_D$, where:
	\begin{align*}
		S_{11} &= \{ 1,2,6,\alpha_1,\alpha_1+1,\alpha_2,\alpha_3,\alpha_5 \},\\
		S_{14} &= \{ 1,2,5,10,\alpha_1,\alpha_1'+2,35+9\sqrt{14} \},\\
		S_{22} &= \{ 1,5,7,\alpha_1+1,\alpha_5, 197+42\sqrt{22}\},\\ 
		S_{26} &= \{ 1,2,3,\alpha_1,\alpha_1+4,\alpha_5\},\\
		S_{38} &= \{ 1,3,\alpha_1+1,\alpha_3,68+11\sqrt{38} \},\\
		S_{46} &= \{ 1,2,5,\alpha_1,\alpha_3+2\},\\
		S_{62} &= \{ 1,3,5,\alpha_1,63-8\sqrt{62} \}. \qedhere
	\end{align*} 
\end{proof} 

This concludes the proof of Theorem \ref{ThmMain2}. Although it was not needed, we actually ran the computations behind the proofs of both Propositions for all $D \leq 2^{18}=262144$.

For Theorem \ref{thm:ternary}, it only remains to prove that neither $\Q(\!\sqrt{10})$ nor $\Q(\!\sqrt{65})$ admit a free universal ternary lattice. This will follow from Theorem \ref{thm:list}, which provides a complete list of candidates for universal ternary lattices over the remaining fields.

\subsection{Conjectural list of universal ternary lattices} \label{subsec:candidates}

Here we shall give a full (conjectural) list of all universal ternary quadratic lattices over real quadratic fields other than $\Q(\!\sqrt2)$, $\Q(\!\sqrt3)$, $\Q(\!\sqrt5)$. As we established in Theorem \ref{ThmMain2}, such lattices can only exist for fields $\Q(\!\sqrt{D})$ for $D = 6, 7, 10, 13, 17, 21, 33, 41, 65, 77$.

In fields where the fundamental unit is totally positive, we denote it by $\varepsilon_D$, i.e., we have $\varepsilon_6 = 5+2\sqrt6$, $\varepsilon_7 = 8+3\sqrt7$, $\varepsilon_{21}=(5+\sqrt{21})/2$, $\varepsilon_{33}=23+4\sqrt{33}$, $\varepsilon_{77}=(9+\sqrt{77})/2$. We write $Q'$ for the conjugate of the quadratic form $Q$, i.e., the form defined by $Q'(x)=Q(x)'$. For $\Z[\sqrt{10}]$ and $\Z\bigl[\frac{1+\sqrt{65}}2\bigr]$, we use $\mathfrak{p}_2$ to denote a certain dyadic ideal; more precisely, $\mathfrak{p}_2^{-1} = \Z \cdot 1 + \Z \cdot \frac{\sqrt{10}}{2}$ in the former case and $\mathfrak{p}_2^{-1} = \Z \cdot 1 + \Z \cdot \frac{1+\omega_{65}}{2}$ in the latter.

We express the lattices as quadratic forms, since it is more compact than writing Gram matrices. The non-free probably universal lattices over $\Q(\!\sqrt{10})$ and $\Q(\!\sqrt{65})$ are included at the end of the list in the form of a polynomial where the variables $x,y$ take values from $\co_F$ while $z$ can take values from the fractional ideal $\mathfrak{p}_2^{-1}$.

When we say that a free ternary lattice $(L,Q)$ \emph{corresponds} to a quadratic form $\varphi$, we mean that there exists a basis $b_1,b_2,b_3$ of $L$ such that $Q(xb_1+yb_2+zb_3) = \varphi(x,y,z)$. In the non-free case, we similarly have $L = \co_F b_1 + \co_F b_2 + \mathfrak{p}_2^{-1} b_3$ for linearly independent $b_1,b_2,b_3$, and again $Q(xb_1+yb_2+zb_3) = \varphi(x,y,z)$.

We shall need the following quadratic forms: 
\begin{align*}
    \varphi_1^{(6)}(x,y,z) &= x^2+y^2+2z^2 + xz + \sqrt6 yz,\\
    \varphi_2^{(6)}(x,y,z) &= x^2+3y^2+(3+\sqrt6)z^2 + xy + (2+\sqrt6)xz;\\
    \varphi^{(7)}(x,y,z) &= x^2+y^2+2z^2 + \sqrt7 yz;\\
    \varphi_{24}^{(13)}(x,y,z) &= x^2+y^2+2z^2 + \omega_{13} xz + {\omega'_{13}}yz,\\
    \varphi_{12}^{(13)}(x,y,z) &= x^2+y^2+2z^2 + xy + \omega_{13} yz,\\
    \varphi_{\mathrm{h2}}^{(13)}(x,y,z) &= x^2+2y^2+(3+\omega_{13})z^2 + \omega_{13} xy + 2xz,\\
    \varphi_{\mathrm{h3}}^{(13)}(x,y,z) &= x^2+y^2+(2+\omega_{13})z^2 + \omega_{13} yz,\\
    \varphi_{\mathrm{nm}}^{(13)}(x,y,z) &= x^2+2y^2+(2+\omega_{13})z^2 + \omega_{13} xz + \omega_{13} yz;\\
    \varphi_{1}^{(17)}(x,y,z) &= x^2+y^2+2z^2 + xz + \omega_{17} yz,\\
    \varphi_{2}^{(17)}(x,y,z) &= x^2+2y^2+2z^2 + \omega_{17} xy + {\omega'_{17}} xz -yz,\\
    \varphi_3^{(17)}(x,y,z) &= x^2+4y^2+6z^2 + (1-\omega_{17}) xz + (1-4\omega_{17})yz,\\
    \varphi_{\mathrm{nm1}}^{(17)}(x,y,z) &= x^2+y^2+2z^2 + \omega_{17} yz,\\
    \varphi_{\mathrm{nm2}}^{(17)}(x,y,z) &= x^2+(2+\omega_{17})y^2+(6+2\omega_{17})z^2 + (1+\omega_{17}) xy + 3 xz,\\
    \varphi_{\mathrm{nm3}}^{(17)}(x,y,z) &= x^2+(2+\omega_{17})y^2+3z^2 + \omega_{17} xz + 2 yz;\\
    \varphi^{(21)}(x,y,z) &= x^2+y^2+\varepsilon_{21} z^2 + (1-\varepsilon_{21}) yz;\\
    \varphi^{(33)}(x,y,z) &= x^2+y^2+2\varepsilon_{33} z^2 + (7+3\omega_{33})xz + (2+\omega_{33}) yz;\\
    \varphi^{(41)}(x,y,z) &= x^2+4y^2+(15-4\omega_{41}) z^2 + 3xy + (4-\omega_{41}) xz;\\
    \varphi^{(77)}(x,y,z) &= x^2+y^2+2\varepsilon_{77} z^2 + (\varepsilon_{77}-1) yz;\\
    \varphi^{(10)}(x,y,z) &= x^2+(4+\sqrt{10})y^2+4z^2 +2xz + (6+\sqrt{10}) yz;\ x,y\in\co_F,\ z \in \mathfrak{p}_2^{-1};\\
    \varphi^{(65)}(x,y,z) &= x^2+2y^2+4z^2 + xy +2xz + (2-\omega_{65}) yz;\ x,y\in\co_F,\ z \in \mathfrak{p}_2^{-1}.
\end{align*}

The notation is somewhat ad hoc; the important part is that we have a distinct symbol for each form. However, there is some information contained: The upper index identifies the field. In the lower index, numbers $1$, $2$ and $3$ only have a distinguishing function, whereas $12$ and $24$ stand for the number of automorphisms, \enquote{h2} and \enquote{h3} mean that the quadratic form has class number $2$ or $3$, and \enquote{nm} means that the lattice is non-maximal, i.e., that it is contained in another integral lattice.

Now we are able to state a refinement of Theorem \ref{thm:ternary}. Note we will proceed to prove it; the conjectural part is only the fact that all the listed forms are indeed universal.

\begin{thm} \label{thm:list}
Let $F=\Q(\!\sqrt{D})$ be a real quadratic field, $D \neq 2,3,5$ squarefree. Let $L$ be a totally positive definite ternary quadratic $\co_F$-lattice which is universal.

Then $D \in \{6,7,10,13,17,21,33,41,65,77\}$ and furthermore:
 \begin{itemize}
     \item if $D=6$, then $L$ corresponds to $\varphi_1^{(6)}$, $\varepsilon_6 \varphi_1^{(6)}$, $\varphi_2^{(6)}$ or $\varepsilon_6 \varphi_2^{(6)}$;
     \item if $D=7$, then $L$ corresponds to $\varphi^{(7)}$ or $\varepsilon_7 \varphi^{(7)}$;
     \item if $D=10$, then $L$ is non-free and corresponds to $\varphi^{(10)}$;
     \item if $D=13$, then $L$ corresponds to $\varphi_{24}^{(13)}$, $\varphi_{12}^{(13)}$, ${\varphi_{12}^{(13)}}'$, $\varphi_{\mathrm{h2}}^{(13)}$, ${\varphi_{\mathrm{h2}}^{(13)}}'$, $\varphi_{\mathrm{h3}}^{(13)}$, $\varphi_{\mathrm{nm}}^{(13)}$ or ${\varphi_{\mathrm{nm}}^{(13)}}'$;
     \item if $D=17$, then $L$ corresponds to $\varphi_{1}^{(17)}$, ${\varphi_{1}^{(17)}}'$, $\varphi_{2}^{(17)}$, $\varphi_3^{(17)}$, ${\varphi_3^{(17)}}'$, $\varphi_{\mathrm{nm1}}^{(17)}$, ${\varphi_{\mathrm{nm1}}^{(17)}}'$, $\varphi_{\mathrm{nm2}}^{(17)}$, ${\varphi_{\mathrm{nm2}}^{(17)}}'$, $\varphi_{\mathrm{nm3}}^{(17)}$ or ${\varphi_{\mathrm{nm3}}^{(17)}}'$;
     \item if $D=21$, then $L$ corresponds to $\varphi^{(21)}$ or $\varepsilon_{21} \varphi^{(21)}$;
     \item if $D=33$, then $L$ corresponds to $\varphi^{(33)}$ or $\varepsilon_{33} \varphi^{(33)}$;
     \item if $D=41$, then $L$ corresponds to $\varphi^{(41)}$;
     \item if $D=65$, then $L$ is non-free and corresponds to $\varphi^{(65)}$;
     \item if $D=77$, then $L$ corresponds to $\varphi^{(77)}$ or $\varepsilon_{77} \varphi^{(77)}$.
 \end{itemize}
\end{thm}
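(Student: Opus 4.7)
The plan is to refine the computation underlying Theorem \ref{ThmMain2}. That theorem already restricts $D$ to the thirteen listed values; excluding $D = 2,3,5$ from the statement, ten fields remain, namely $D \in \{6,7,10,13,17,21,33,41,65,77\}$. For each such field $F = \Q(\!\sqrt{D})$, the task is to run a finite enumeration of all candidate ternary $\co_F$-lattices that could possibly be universal, and the theorem records those candidates that survive the enumeration. Universality of the survivors remains the (conjectural) content of Conjecture \ref{conj:list}; only the upper bound, i.e.\ completeness of the list, is being asserted here.

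For a fixed $D$, the procedure is as follows. Since $L$ is universal, pick $v_1 \in L$ with $Q(v_1) = 1$ and $v_2 \in L$ with $Q(v_2) = 2$. The Cauchy--Schwarz inequality $\B(v_1, v_2)^2 \preceq 2$ combined with $\B(v_1, v_2) \in \frac{1}{2}\co_F$ leaves only finitely many choices for $\B(v_1, v_2)$. For the third vector, enumerate a generous finite set of representatives for small elements $\alpha \in \co_F^+$ modulo $(\co_F^\times)^2$, and for each $\alpha$ in the set choose $v_3 \in L$ with $Q(v_3) = \alpha$; again $\B(v_i, v_3)^2 \preceq Q(v_i)Q(v_3)$ bounds the off-diagonal entries to a finite set. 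Running through every admissible choice of diagonal and off-diagonal data produces a finite list of candidate Gram matrices, which, after reducing modulo $\mathrm{GL}_3(\co_F)$-equivalence in Magma, yields a finite list of candidate ternary lattices up to isometry.

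For each candidate, we verify computationally that it represents every totally positive element of norm below a suitably large bound; any lattice failing this representation test is discarded, and the ones that pass are the ones listed in the statement. For $\Q(\!\sqrt{10})$ and $\Q(\!\sqrt{65})$ (the two fields on the list with class number $2$), the structure theorem for finitely generated torsion-free modules over Dedekind domains forces us to run the above enumeration separately for free lattices and for lattices of the form $\co_F b_1 + \co_F b_2 + \mathfrak{p}_2^{-1} b_3$; the claim that no free universal ternary exists over these two fields is then precisely the output of the computation, namely that every free Gram-matrix candidate misses some small totally positive integer. The main obstacle is organizational rather than conceptual: ensuring that the Gram-matrix enumeration is exhaustive (so no lattice is accidentally overlooked through an over-eager equivalence reduction) and that the finite test set of small totally positive elements is chosen large enough to separate every non-universal candidate from the genuinely promising ones.
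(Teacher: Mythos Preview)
Your overall strategy matches the paper's: invoke Theorem \ref{ThmMain2} to reduce to the ten fields, then carry out a finite Gram-matrix enumeration in Magma. However, there is a genuine gap in the enumeration step.

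You choose $v_1,v_2,v_3\in L$ with $Q(v_1)=1$, $Q(v_2)=2$, $Q(v_3)=\alpha$, bound the off-diagonal entries via Cauchy--Schwarz, and then treat each resulting $3\times3$ Gram matrix as a candidate for $L$ itself. But $v_1,v_2,v_3$ need not generate $L$; they only generate a sublattice $M\subseteq L$ of finite index (when they are independent). Discarding $M$ because it fails to represent some small totally positive element does not rule out $L$, since a proper integral superlattice of $M$ may well represent it. This is not only a non-free-lattice phenomenon: even when $h(F)=1$ and $L$ is free, a triple of short vectors is typically not an $\co_F$-basis. As written, your procedure would correctly produce sublattices of the listed forms but could silently miss any $L$ that strictly contains every sublattice you enumerate.

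The paper closes this gap by working with a larger test set $S$ (all totally positive elements up to a specified norm bound, sometimes augmented by a few extra elements) and checking that any rank-$\leq 3$ totally positive semidefinite matrix in $M_{|S|}\bigl(\tfrac12\co_F\bigr)$ with the elements of $S$ on the diagonal already forces the ambient lattice to be one of the listed ones. An alternative fix within your framework would be, after obtaining each sublattice $M$, to enumerate all integral superlattices of $M$ (a finite set, since their discriminants divide that of $M$) and apply the representation test to those; you would also need to handle the degenerate case where $v_1,v_2,v_3$ are linearly dependent.
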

\begin{proof}
Thanks to Theorem \ref{ThmMain2}, it only remains to show for these $10$ specific fields $F=\Q(\!\sqrt{D})$ that every universal ternary $\co_F$-lattice must be in fact one of the preceding list. We show that every lattice which represents all elements of $\co_F^+$ up to a certain norm is already one of the listed lattices. This is again a computational task, so we only list the sets of elements $S$ such that a lattice $L$ representing all of $S$ is necessarily among the listed ones.

For $D=6$, we checked that no other ternary lattice can at the same time represent $5$ and all elements with norm at most $10$. For $D=7$, no other ternary lattice represents $7+2\sqrt{7}$ as well as all elements with norm at most $9$. For $D=10$, no other ternary lattice represents all elements with norm at most $31$. For $D=13$, no other ternary lattice represents all elements with norm at most $52$. For $D=17$, no other ternary lattice represents all elements with norm at most $30$. For $D=21$, no other ternary lattice represents $5$ as well as all elements with norm at most $15$. For $D=33$, no other ternary lattice represents all elements with norm at most $4$. For $D=41$, no other ternary lattice represents all elements with norm at most $10$. For $D=65$, no other ternary lattice represents all elements with norm at most $16$. For $D=77$, no other ternary lattice represents all elements with norm at most $91$.
\end{proof}

We explicitly state the following conjecture.

\begin{conjecture}
All the $34$ ternary lattices listed in the above theorem are indeed universal.
\end{conjecture}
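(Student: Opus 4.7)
The plan is to verify universality for each of the $34$ candidate lattices individually, partitioning them by their class number and structural type. For every $L$, total positive definiteness is a routine determinant check on the Gram matrix, so I focus on representation of arbitrary $\alpha \in \co_F^+$. The paper already notes that the five lattices with class number $1$ (one over each of $\Q(\!\sqrt{13})$ and $\Q(\!\sqrt{17})$, plus the three classical ones over $\Q(\!\sqrt{2}),\Q(\!\sqrt{3}),\Q(\!\sqrt{5})$) succumb to the local-global principle \cite[102:5]{OM}; the remaining $29$ require genuinely new input.

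First I would handle \emph{every} lattice of class number $1$ uniformly: after computing the genus in Magma and confirming a single class, universality reduces to showing that each completion $L_{\mathfrak p}$ represents all of $\co_{F_{\mathfrak p}}^+$. Using a Jordan splitting $L_{\mathfrak p} = \langle \alpha_1\rangle \perp \cdots \perp \langle \alpha_3\rangle$ (or binary/ternary unimodular blocks in the dyadic case), one applies Lemma \ref{local value lem} and Corollary \ref{local value} to verify that every square class in $F_{\mathfrak p}^{\times}$ meets $Q(L_{\mathfrak p})$. The nondyadic primes are immediate; the dyadic primes over $2$ require checking a finite set of square classes and are the only tedious step. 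This strategy should close all candidates with trivial class number, matching the classical setup of \cite{CKR}.

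Next I would address the class-number-$2$ and class-number-$3$ lattices ($\varphi_{\mathrm{h2}}^{(13)}$ and $\varphi_{\mathrm{h3}}^{(13)}$, together with their conjugates). Here the local-global principle fails, so I would compute the genus and, inside it, the spinor genera; by the Duke--Schulze-Pillot theorem, any integer represented by the spinor genus of $L$ but not by $L$ itself must be a \emph{spinor exceptional integer}, and these form an explicitly describable sparse set characterized by local conditions at finitely many primes. I would enumerate these exceptions and verify, either by Magma search up to a controlled bound or by producing explicit isometries to other classes in the genus, that each such exceptional $\alpha \in \co_F^+$ is in fact represented by $L$. For the non-maximal lattices ($\varphi_{\mathrm{nm}}^{(13)}$, $\varphi_{\mathrm{nm}1}^{(17)}$, $\varphi_{\mathrm{nm}2}^{(17)}$, $\varphi_{\mathrm{nm}3}^{(17)}$) and the two non-free lattices $\varphi^{(10)}$, $\varphi^{(65)}$, I would exploit the inclusion $L \subset \widetilde{L}$ into an overlattice by decomposing $\widetilde{L}/L$ into cosets and transferring representations coset-by-coset, again reducing to a finite-dimensional local calculation.

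The principal obstacle is the class-number $\geq 2$ case: ruling out spinor exceptional integers requires a genuinely non-local argument, since no bound analogous to Theorem \ref{MAINTHEOREM1} forces a \emph{universal} lattice to represent all of $\co_F^+$ after checking only finitely many elements. A resolution likely demands either (i) a Bhargava--Hanke-style escalator over $\co_F$ producing a critical finite set whose representation implies universality, or (ii) explicit theta-series identities distinguishing the classes inside the genus and showing that the representation numbers of $L$ are strictly positive on all of $\co_F^+$. Both approaches are technically demanding but appear feasible for the small fields at hand; any of them would complete the conjecture.
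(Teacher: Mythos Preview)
The statement you are attempting is a \emph{conjecture}, and the paper does not prove it. Immediately after stating it, the authors write ``However, we only have the following:'' and then prove Theorem~\ref{thm:h=1}, which establishes universality for exactly the $14$ lattices of class number~$1$ (five over $\Q(\!\sqrt{13})$ and nine over $\Q(\!\sqrt{17})$) via the local-global principle, together with a numerical check up to norm $250000$ for all $34$. The remaining $20$ lattices are left open. So there is no ``paper's own proof'' to compare against; you are proposing a strategy for an open problem.

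Your proposal also contains factual errors about the setup. The $34$ lattices of Theorem~\ref{thm:list} are over $\Q(\!\sqrt{D})$ for $D \in \{6,7,10,13,17,21,33,41,65,77\}$; the fields $\Q(\!\sqrt{2}),\Q(\!\sqrt{3}),\Q(\!\sqrt{5})$ are explicitly excluded from that list, so they do not account for three of the class-number-$1$ cases. Consequently your count of ``five class-number-$1$ lattices'' and ``$29$ remaining'' is off: the paper already handles $14$, not $5$, and $20$ remain. Your first paragraph (``handle every lattice of class number $1$ uniformly'') therefore reproduces exactly what the paper already does in Theorem~\ref{thm:h=1}, and buys nothing new.

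As for the genuinely open $20$ lattices: what you have written is a plan, not a proof. You correctly identify that spinor-genus theory and Duke--Schulze-Pillot give an asymptotic statement, but you then defer the finitely many potential exceptions to ``Magma search up to a controlled bound'' without saying what that bound is or why it exists. Over $\Z$ such bounds come from effective estimates on Fourier coefficients of cusp forms; over real quadratic fields the analogous machinery (Hilbert modular forms, effective bounds on Petersson norms of the cuspidal part) is available in principle but not invoked here, and producing an explicit, computable bound is precisely the hard step. Your alternative suggestions (an escalator argument \`a la Bhargava--Hanke, or explicit theta identities) are reasonable research directions, but you yourself describe them as ``technically demanding'' and ``appear feasible'' --- this is an outline of possible attacks, not a proof. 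The conjecture remains open after your proposal just as it does after the paper.
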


However, we only have the following:

\begin{thm} \label{thm:h=1}
Each of the $34$ ternary lattices listed in Theorem $\ref{thm:list}$ represents all totally positive definite elements with norm $\leq 250000$. Furthermore, the $5$ lattices over $\Q(\!\sqrt{13})$ and $9$ lattices over $\Q(\!\sqrt{17})$ corresponding to the following quadratic forms are indeed universal:
 \begin{itemize}
     \item $\varphi_{24}^{(13)}$, $\varphi_{12}^{(13)}$, ${\varphi_{12}^{(13)}}'$, $\varphi_{\mathrm{nm}}^{(13)}$ and ${\varphi_{\mathrm{nm}}^{(13)}}'$;
     \item $\varphi_{1}^{(17)}$, ${\varphi_{1}^{(17)}}'$, $\varphi_{2}^{(17)}$, $\varphi_{\mathrm{nm1}}^{(17)}$, ${\varphi_{\mathrm{nm1}}^{(17)}}'$, $\varphi_{\mathrm{nm2}}^{(17)}$, ${\varphi_{\mathrm{nm2}}^{(17)}}'$, $\varphi_{\mathrm{nm3}}^{(17)}$ and ${\varphi_{\mathrm{nm3}}^{(17)}}'$.
 \end{itemize}
\end{thm}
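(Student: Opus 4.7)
The plan naturally splits along the two claims of the theorem. The first claim (representation up to norm $250{,}000$) is a finite computational verification, while the second (universality of the $14$ specified lattices) follows from the integral local--global principle once class number $1$ has been established.

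For the first claim, fix one of the $34$ forms $\varphi$ from Theorem \ref{thm:list}. I would enumerate a complete set of representatives of totally positive elements $\alpha\in\co_F^{+}$ with $\mathrm{N}_{F/\Q}(\alpha)\le 250{,}000$ modulo the action of $(\co_F^{\times})^2$. Representability is preserved under this action, since $Q(\varepsilon v)=\varepsilon^{2}Q(v)$, and a fundamental domain can be cut out by the logarithmic embedding of the fundamental unit, giving a finite list. For each representative $\alpha$, deciding whether $\varphi$ represents $\alpha$ is itself a finite bounded search, because total positive definiteness of $\varphi$ in both real embeddings forces bounds on the conjugates, and hence on the integral coordinates, of any preimage. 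These checks are routine to implement in Magma.

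For the second claim, proceed as indicated in the surrounding discussion: for each of the $14$ forms, compute the class number of the associated lattice in Magma and verify it equals $1$. By the integral local--global principle \cite[102:5]{OM}, universality then reduces to checking that $L_\mathfrak{p}$ represents every element of $\co_{F,\mathfrak{p}}$ at every prime $\mathfrak{p}$ of $\co_F$. At non-dyadic primes $\mathfrak{p}$ not dividing the discriminant of $L$, the local lattice $L_\mathfrak{p}$ is unimodular and standard theory (\cite[92:1]{OM} together with \cite[63:14]{OM}) shows that such a ternary lattice represents all of $\co_{F,\mathfrak{p}}$. Only finitely many primes remain, namely the dyadic primes and those dividing the discriminant of $L$, and at these one can read off local universality from an explicit Jordan (or, in the dyadic case, BONG) decomposition.

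The hard part will be the dyadic local analysis, which is notoriously delicate and must be performed separately for each lattice; moreover, the dyadic primes behave differently in $\Q(\!\sqrt{13})$ (where $2$ is inert) and in $\Q(\!\sqrt{17})$ (where $2$ splits), so no uniform argument is available. The lattices omitted from the universality claim---$\varphi_{\mathrm{h2}}^{(13)}$, $\varphi_{\mathrm{h3}}^{(13)}$, $\varphi_{3}^{(17)}$, and all candidates over the remaining eight fields---presumably fail to have class number $1$ (the labels \enquote{h2} and \enquote{h3} already record class numbers $2$ and $3$), so the local--global reduction does not apply and a different technique, outside the scope of this theorem, would be needed. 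A secondary practical concern is the volume of computation underlying Part~1, particularly for the fields of larger discriminant such as $\Q(\!\sqrt{77})$, but this is computational rather than mathematical in nature.
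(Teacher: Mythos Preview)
Your proposal is correct and follows essentially the same approach as the paper: a finite computational check in Magma for the first claim, and for the second, verification that the $14$ lattices have class number $1$ followed by the local--global principle \cite[102:5]{OM} together with local universality. The paper's own proof is considerably terser (it simply asserts the computation and the class number/local universality without spelling out the reduction to finitely many primes or the dyadic subtleties), but the underlying strategy is identical.
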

\begin{proof}
For each of the lattices, it is only a computational task to verify that it represents all elements up to certain norm. We used a simple program in Magma to do that.

The $14$ listed lattices all have class number $1$ and one easily verifies that they are locally universal.
\end{proof}

There are several field invariants related to quadratic forms. One of them is the \emph{minimal rank of a universal quadratic $\co_F$-lattice}. However, there are several versions of this invariant. For a totally real number field $F$, we put $m^{\mathrm{free}}(F)$, $m^{\mathrm{free}}_{\mathrm{cl}}(F)$, and $m^{\mathrm{free}}_{\mathrm{diag}}(F)$ to be the minimal rank of a possibly non-classical / classical / diagonal universal quadratic form, respectively. Similarly, we also write $m(F)$ and $m_{\mathrm{cl}}(F)$ for the two versions of the minimal rank of a universal quadratic lattice. Although it always seemed probable that these values are in general not the same, their exact values are known so rarely that, until now, it was theoretically possible to conjecture that $m^{\mathrm{free}} = m^{\mathrm{free}}_{\mathrm{cl}} = m = m_{\mathrm{cl}}$ for every field. (Note that, e.g., for $\Q(\!\sqrt{D})$, $D=2,3,5$, all five invariants coincide by the results of \cite{CKR}, all being equal to $3$; and a combination of \cite{KKP} and \cite{Ki2} yields that there are infinitely many real quadratic fields where all of them are $8$.) We conclude the paper by two remarks about strict inequalities between these invariants.

\begin{remark}\label{rem1}
For $F = \Q(\!\sqrt{13})$, Sasaki \cite{Sa} proved that there are precisely two classical universal quaternary forms over $\co_F$; no classical universal ternary form exists \cite{CKR}. This shows $m^{\mathrm{free}}_{\mathrm{cl}}(F)=4$. In Theorem \ref{thm:h=1}, we have found multiple non-classical universal ternary forms, hence $m^{\mathrm{free}}(F)=3$. It is easy to check that neither of the two Sasaki's forms is diagonalizable. Hence, a diagonal universal form requires at least $5$ variables, which means $m^{\mathrm{free}}_{\mathrm{diag}}(F) \geq 5$. (In fact, there are several quinary diagonal forms which seem to be universal, e.g.\ $v^2+w^2+x^2+\frac{5+\sqrt{13}}2 y^2 + \frac{5-\sqrt{13}}{2} z^2$.)
\end{remark}

We have also produced two explicit conjectural examples of real quadratic fields where the minimal rank $m$ of a universal lattice is strictly smaller than the minimal rank $m^{\mathrm{free}}$ of a free universal quadratic lattice. 

\begin{remark}\label{rem2}
Let $F$ be either $\Q(\!\sqrt{10})$ or $\Q(\!\sqrt{65})$. We have shown that there is exactly one candidate for a universal ternary $\co_F$-lattice (see Theorem \ref{thm:list}), and this lattice is non-free. It can also be observed to be non-classical. Thus we know that $m^{\mathrm{free}}(F)>3$ and $m_{\mathrm{cl}}(F)>3$, while we conjecture $m(F)=3$.
In fact, every non-free lattice of rank $r$ is contained in a free lattice of rank $r+1$, so the conjecture would also imply $m^{\mathrm{free}}(F)=4$.
\end{remark}

As a corollary of Theorem \ref{thm:ternary}, we see that all real quadratic fields admitting a \emph{free} universal ternary lattice have class number $1$. The only two fields of higher degree where a universal ternary lattice has been found (Krásenský--Scharlau [in preparation]), namely $\Q(\!\sqrt2,\sqrt5)$ and $\Q(\zeta_{20})^+$, both have class number $1$ as well. This leads us to the following question, related to Kitaoka's Conjecture:

\begin{quest}
Let $F$ be a totally real number field admitting a free universal ternary lattice. Does it follow that the class number of $F$ is $1$?
\end{quest}

\subsection*{Acknowledgments} We are very grateful to Markus Kirschmer for sharing a Magma code which checks universality of a quadratic lattice up to a given norm.

\end{document}